\newcommand{\bbr}{\mathbb R}
\newcommand*\di{\mathop{}\!\mathrm{d}}
\newcommand{\opnorm}[1]{{ \vert\kern-0.25ex \vert\kern-0.25ex \vert #1 
   \vert\kern-0.25ex \vert\kern-0.25ex \vert}}
\newtheorem{theorem}{Theorem}[section]
\newtheorem{lemma}{Lemma}[section]
\newtheorem{corollary}{Corollary}[section]
\newtheorem{proposition}{Proposition}[section]
\newtheorem{remark}{Remark}[section]
\newtheorem{definition}{Definition}[section]
\begin{document}

\title[Weak flocking of kinetic CS model ]{On the weak flocking of the kinetic Cucker-Smale model in a fully non-compact support setting} 

\author[Ha]{Seung-Yeal Ha}
\address[Seung-Yeal Ha]{\newline Department of Mathematical Sciences and Research Institute of Mathematics, \newline
	Seoul National University, Seoul, 08826, Republic of Korea}
\email{syha@snu.ac.kr}

\author[Wang]{Xinyu Wang}
\address[Xinyu Wang]{\newline Department of Mathematical Sciences, \newline
	Seoul National University, Seoul, 08826, Republic of Korea}
\email{wangxinyu97@snu.ac.kr}

\thanks{\textbf{Acknowledgment.} 
The authors are deeply indebted and grateful to the anonymous reviewers whose extensive comments and suggestions have substantially improved the quality of this paper.
	The work of S.-Y. Ha is supported by National Research Foundation (NRF) grant funded by the Korea government(MIST) (RS-2025-00514472), and the work of X. Wang is supported by the Natural Science Foundation of China (grants 123B2003), the China Postdoctoral Science Foundation (grants 2025M774290), and Heilongjiang Province Postdoctoral Funding (grants LBH-Z24167).}

\begin{abstract}
We study the emergent behaviors of the weak solutions to the kinetic Cucker-Smale (in short, KCS) model in a non-compact spatial-velocity support setting. Unlike the compact support situation, non-compact support of a weak solution can cause a communication weight to have a zero lower bound, and position difference does not have a uniformly linear growth bound. These cause previous approach based on the nonlinear functional approach for spatial and velocity diameters to break down. To overcome these difficulties, we derive refined estimates on the upper bounds for the high-order spatial-velocity moments and show the uniqueness of the weak solution using the estimate on the deviation of particle trajectories. For the estimate of emergent dynamics, we consider two classes of distribution functions with decaying properties (an exponential decay or polynomial decay) in phase space, and then develop an {\it time-varying effective region} approach to verify that the second moment for the velocity deviation from an average velocity tends to zero asymptotically, while the second moment for spatial deviation from the center of mass remains bounded uniformly in time. This illustrates the robustness of the weak flocking dynamics of the KCS model even for fully non-compact support settings in phase space and generalizes earlier results on strong flocking dynamics in a compact support setting.
\end{abstract}

\keywords{Cucker-Smale model, kinetic Cucker-Smale equation, spatial-velocity decay, weak flocking}

\subjclass[2020]{34D05, 70K20, 76D07}

\maketitle

%\tableofcontents

\section{Introduction}\label{sec:1}
\setcounter{equation}{0}
The dynamics of the Cucker-Smale model with a large system size has been effectively approximated by the corresponding kinetic model via the mean-field limit  \cite{kinetic4,k4,k2,kinetic2,ks2,kinetic5,w4}. The resulting kinetic model belongs to the class of the Vlasov-McKean type models, which is a nonlinear transport equation on the phase space with a non-local forcing term. In most literature, we deal with a situation in which physical observables are confined in a bounded region in the phase space $\bbr_x^d \times \bbr_v^d =\bbr^{2d}$ so that the kinetic density (weak solution to the KCS model) has a compact support in the phase space, and flocking dynamics has also been considered in the same  compact support setting using the projected spatial and velocity support of the kinetic function and second velocity moment (see \cite{k4,k2, k1, w4}). Recently, the second author and his collaborators \cite{w6, w2} considered the KCS model in a spatially extended setting so that the spatial support of the kinetic density can be unbounded, whereas the velocity support is still bounded. Thus, whether the flocking dynamics of the KCS model can emerge from a non-compact velocity support setting is an interesting remaining question until now.

In this paper, we revisit this remaining question with a final resolution. More precisely, we provide a sufficient framework leading to emergent dynamics of the KCS model in a fully non-compact setting in phase space. To set up the stage, we begin with the CS model \cite{cucker2}. Let $x_i$ and $v_i$ be the position of the $i$-th CS particle. The (particle) CS model reads as follows.
\begin{equation} \label{A-0}
\begin{cases}
\displaystyle  {\dot x}_i = v_i, \quad t > 0,~~i = [N]:= \{ 1, \cdots, N \}, \vspace{6pt}\\
\displaystyle {\dot v}_i = \frac{\kappa}{N} \sum_{j \in [N]} \phi(|x_j - x_i|) (v_j - v_i),
\end{cases}
\end{equation}
where $\kappa$ and $| \cdot |$ are the nonnegative coupling strength and the standard Euclidean norm in $\bbr^d$, respectively, and $\phi$ denotes the long-ranged communication weight taking the following explicit form:
\begin{equation}\label{A-0-1}
	\phi(r)=\frac{1}{({1+ r^2})^{\frac{\beta}{2}}},\quad r \geq 0, \quad  0 \leq \beta \leq 1. 
\end{equation} 

Note that the spatial decay rate of $\phi$ is $\beta$ and communication between particles is assumed to be long-ranged $(\beta \in [0, 1])$. For a short-ranged communication weight with $\beta > 1$, local-flocking can emerge asymptotically which is beyond of the scope of this paper (see \cite{bi-cluster}). The CS model \eqref{A-0}--\eqref{A-0-1}  has been extensively studied from various perspectives, e.g., collision avoiding \cite{collision1,collision2}, flocking with hierarchical leadership \cite{leader,ex5}, discrete flocking \cite{DS1,DS2}, rooted leadership flocking \cite{switch, L12}, multi-cluster flocking \cite{bi-cluster}, stochastic flocking \cite{ks1,ks2,CM-2008}, infinite particle \cite{w1,w3, J1,w5}, etc. When the system size $N$ of \eqref{A-0} is sufficiently large, we can use the mean-field approximation to describe the dynamics of the one-particle distribution function which can be realized as the suitable weak limit of the empirical measure associated with particle configuration. More precisely,  let $f = f(t, x, v)$ be the one-particle distribution function (or simply kinetic density) for the Cucker-Smale ensemble at  phase space position $(x,v)$ at time $t$. Then, the temporal-phase space dynamics of $f$ is governed by the following Cauchy problem for the KCS model:

\begin{equation} 
	\begin{cases} \label{A-1}
		\displaystyle \partial_tf+ \nabla_x \cdot (v f) + \nabla_v \cdot (L[f]f)=0, \quad t > 0,~~(x, v) \in {\mathbb R}^{2d},  \vspace{6pt}\\
		\displaystyle L[f](t,x,v)=-\kappa \int_{\mathbb{R}^{2d}} \phi( |x-x_{\star}|) \left(v-v_{\star}\right)  f(t, x_{\star}, v_{\star}) \di x_{\star} \di v_{\star}, \\
		\displaystyle f \Big|_{t = 0}  =f^{\mathrm{in}}.
	\end{cases}
\end{equation}

\noindent The KCS model \eqref{A-1} can be rigorously derived from the mean-field limit  \cite{k2, k1, HKPZ, kinetic2, NP} from \eqref{A-0}.  When the initial datum $f^{\mathrm{in}}$ is compactly supported in $x$ and $v$, the global existence, uniqueness, and stability results for \eqref{A-1} have been extensively studied in \cite{k4, kinetic1, kinetic2, kinetic4, kinetic5,k2, k12, k1, kinetic3, a2}, and the flocking behavior was also studied in \cite{k2,k4} in a strong sense. The authors in the  aforementioned works showed that there exist positive constants $C$ and $\Lambda$ depending only on ${\rm spt}f^{\mathrm{in}}$ (support of the initial kinetic density $f^{\mathrm{in}}$) and $\beta$ such that
\begin{equation}
\begin{cases} \label{A-2}
\displaystyle \sup\limits_{0\le t<\infty}\sup\limits_{(x,v)\in{\rm spt} f(t)} |x- (x_c^{\mathrm{in}} + t v_c^{\mathrm{in}}) |\le C: \quad  \mbox{spatial cohesion},\vspace{6pt}  \\
\displaystyle \sup\limits_{(x,v)\in{\rm spt} f(t)} |v-v_c^{\mathrm{in}} |\le Ce^{-\Lambda t}: \hspace{2cm}  \mbox{velocity alignment},
\end{cases}
\end{equation}
where $x^{\mathrm{in}}_c$ and $v^{\mathrm{in}}_c$ are the center of spatial and average velocity of $f^{\mathrm{in}}$:

\[  x_c^{\mathrm{in}} := \frac{1}{\| f^{\mathrm{in}}\|_{1}}\int_{\mathbb{R}^{2d}}xf^{\mathrm{in}}(x, v) \di x \di v, \quad v_c^{\mathrm{in}} := \frac{1}{\| f^{\mathrm{in}}\|_{1}}\int_{\mathbb{R}^{2d}}vf^{\mathrm{in}}(x, v) \di x \di v. \]
Here, $\| \cdot \|_{1}$ is the $L^1$-norm on $\bbr^{2d}$. \newline

Note that the first estimate in \eqref{A-2} illustrates that the spatial support of kinetic density lies in a bounded region around the center of mass (spatial cohesion), whereas the second estimate in \eqref{A-2}
 denotes that the velocity fluctuations around the velocity average tend to zero (asymptotic formation of velocity alignment). Hence both estimates provide information on the projected spatial and velocity supports of the kinetic density. In this sense, we call the estimates \eqref{A-2} as a strong flocking dynamics for \eqref{A-1}. \newline 
 
In contrast to the well-studied strong flocking theory for compactly supported initial data, in this work, we consider the KCS model in a fully non-compact spatial-velocity setting. The study of non-compact initial data has a long history in kinetic theory, especially for Vlasov-type equations over the past several decades, including the Vlasov-Poisson system \cite{C-Z-2016,L2016}, the Vlasov-Maxwell system \cite{D-L1989,S2004}, and the references therein. In particular, in kinetic flocking models with diffusion \cite{ks1,D-F-T-2010}, both the spatial and velocity supports generally become non-compact for any positive time due to the diffusive effect, even when the initial data are compactly supported. These observations indicate that the non-compact setting is both mathematically natural and physically important. In such a fully non-compact setting, the standard strong flocking estimates \eqref{A-2} is no longer expected to hold, since the spatial diameter is no longer uniformly controlled. Fortunately, we can still use nonlinear functional approach for velocity alignment based on the second moment for the velocity fluctuation around the average one as in \cite{k1}:
 \[
 \int_{\bbr^{2d}} |v-v_{c}|^2 f (t,x,v) \di x \di v, \qquad \mbox{where}~~\quad v_c := \frac{1}{\| f \|_{1}} \int_{\bbr^{2d}} v f(t,x,v) \di x \di v.
 \]
The asymptotic behavior of the KCS model \eqref{A-1} on a non-compact domain was first addressed by Chen and Yin \cite{k6} about a decade ago on a tubular domain in phase space, and they showed that if the initial datum $f^{\mathrm{in}}$ satisfies the boundedness condition in the sense that there exists a positive constant $\lambda$ such that 
\begin{equation} \label{A-3}
	\sup\left\{\left |v-\frac{x}{\lambda}\right |:(x,v)\in {\rm spt}f^{\mathrm{in}}\right\}< \infty,
\end{equation}
then they derived a new type of collective behavior for a short-ranged communication weight with $\beta > 1$:
\begin{equation} \label{A-4}
	\lim_{t \to \infty} \int_{\mathbb{R}^{2d}}\left |v-\frac{x}{t+\lambda}\right |^k f(t, x, v) \di x \di v = 0, \quad \forall\ k\ge 2.
\end{equation} 
Recently, the authors in \cite{w2,w6} showed that even for the cases where the spatial support of the initial datum $f^{\mathrm{in}}$ is unbounded, the second velocity moments centered around the initial mean velocity  tend to zero asymptotically for a long-ranged communication with $\beta\in[0, 1]$. So far, all previous works on the asymptotic behaviors of the KCS model require that initial datum takes either compact velocity support or a tubular type support in phase space. Based on what we have discussed so far, it is natural to ask the following two questions: 
\begin{quote}
	\begin{itemize}
		\item
		(Q1):~When initial kinetic density is not compactly supported in spatial-velocity variables, is the weak solution to \eqref{A-1} unique?
		\vspace{0.2cm}
		\item
		(Q2):~If the weak solution is unique, under what conditions on initial datum and system parameters, can we guarantee a flocking dynamics for the weak solution to the KCS model?
	\end{itemize}
\end{quote}
In this paper, we answer aforementioned two questions in an affirmative manner. For the first question (Q1), we estimate temporal evolution of a functional measuring deviations between particle trajectories issued from the same phase space point. 

Consider the initial datum $f^{\mathrm{in}}$ satisfying the following positivity, boundedness and integrability conditions:
\[
f^{\mathrm{in}} \in(L^1 \cap L_+^\infty)(\mathbb{R}^{2d}) \quad \mbox{and} \quad  ( |x|^2 + |v|^2)f^{\mathrm{in}} \in L^1(\mathbb{R}^{2d}).
\]
In this situation, the global existence of weak solutions to the KCS model has already been obtained in \cite{kinetic3} without uniqueness. To establish the uniqueness of the weak solution, we need to additionally require that initial data exhibit exponentially decay on velocity variable, i.e., there exists a positive constant $\alpha>0$ such that
\begin{align}\label{New1-7}
	\int_{\mathbb{R}^{2d}} e^{\alpha |v|} {f^{\mathrm{in}}(x, v) \di x \di v}<\infty.\end{align}
Let $f$ and $g$ be two weak solutions to \eqref{A-1} corresponding to the same initial datum $f^{\mathrm{in}}$, respectively (see Definition \ref{D2.1} for the concept of weak solution), and let $(X_{f}(t),V_{f}(t))$ and $(X_{g}(t),V_{g}(t))$ be the particle trajectories issued from $(x,v)$ at time $t = 0$, respectively (see Section \ref{sec:2.1}). Then, we introduce a functional $\Delta(f,g)$: 
\[
	\Delta(f,g)(t):=\int_{\mathbb{R}^{2d}} \Big ( |X_{f}(t)-X_{g}(t) |+ |V_{f}(t)-V_{g}(t)| \Big ){f^{\mathrm{in}}(x, v) \di x \di v}, \quad \forall~t\in[0,\tau].
\]
Note that $\Delta(f,g)$ measures overall deviations between two particle trajectories $(X_{f}(t),V_{f}(t))$ and $(X_{g}(t),V_{g}(t))$. Then, we derive the following integral inequality:
\begin{align}\label{New1-8}
\begin{cases}
\displaystyle \Delta[f,g](t) \leq C\int_0^t \Delta[f,g](s)\di s +  C\int_0^t\left(\int_{\mathbb{R}^{2d}} |V_{f}(s)||X_{f}(s)-X_{g}(s)|{f^{\mathrm{in}}(x, v) \di x \di v}\right)\di s, \vspace{6pt}\\
\displaystyle \Delta(f,g)(0) = 0.
\end{cases}
\end{align}
Furthermore, we combine \eqref{New1-7}, \eqref{New1-8}, and $\Delta(f,g)(0) = 0$ to obtain
\[ \Delta(f,g)(t) = 0, \quad t > 0, \]
which implies the uniqueness of weak solution:
\[ X_f = X_g,\quad \mbox{and} \quad  V_f = V_g, \quad \mbox{i.e.,} \quad f = g. \]
(see Theorem \ref{T2.1} and Section \ref{sec:3.2} for details).  \newline

Next, we return to the question (Q2).  First, we recall the concept of weak flocking as follows.
\begin{definition}\label{D1.1}
\emph{(Weak flocking)}
For $\tau  \in (0,\infty]$, let $f\in L^{\infty}([0, \tau);  L^1(\mathbb{R}^{2d}))$ be a weak solution to \eqref{A-1} with bounded second moments in position and space:
\[ \int_{\bbr^{2d}} (|x|^2 + |v|^2) f(t,x,v) \di x \di v < \infty.
\]
Then, $f$ exhibits weak flocking asymptotically if and only if the following two relations hold.
\begin{equation}
\begin{cases} \label{A-5}
\displaystyle \sup\limits_{0\le t<\infty}\int_{\mathbb{R}^{2d}} |x-x_c^{\mathrm{in}} -v_c^{\mathrm{in}} t |^2f(t, x, v) \di x \di v < \infty, \vspace{6pt}\\
\displaystyle  \lim_{t \to \infty} \int_{\mathbb{R}^{2d}} |v-v_c^{\mathrm{in}} |^2f(t, x, v)  \di x \di v  = 0. 
\end{cases}
\end{equation}
\end{definition}
\noindent The estimates in \eqref{A-5} correspond to the relaxed concept of flocking dynamics, and it was used in \cite{k1, w6} as a measurement for flocking, and in the aforementioned works, the authors still require at least the velocity support to be bounded. Compared to the strong flocking \eqref{A-2}, weak  flocking in Definition \ref{D1.1} focuses on the evolutions of second spatial-velocity moments, which are suitable for a fully non-compact support framework. 

Before we move on to the presentation of the second main result, we briefly discuss possible difficulties of fully non-compact setting with the particle CS model \eqref{A-0}. A distinguishing feature of \eqref{A-0} is that the velocity alignment force diminishes as the relative position differences between particles tend to infinity. Consequently, most existing research on the CS-type models focuses on the verification that the relative positions between particles remain uniformly bounded, ensuring that the communication weight between particles retains a positive lower bound (see \cite{k4, Cucker, cucker2, k2}). Furthermore, many studies directly assume that the communication weight between particles has a positive lower bound to avoid such challenge of proving bounded inter-particle distances (see \cite{ks2, ks1}). Additionally, for weak solutions with a compact velocity support, the growth of relative position differences is at most linear (see \cite{w2,w6}). However, for initial data without compact spatial-velocity support, the lower bound of the inter-particle velocity alignment force in the CS model becomes zero, and the most of mass may not be concentrated on the linearly growing region in phase space. To overcome these issues, we impose two types of decay conditions (exponential vs. polynomial) on initial kinetic density at infinity: for positive constants $\alpha>0, \delta\ge1,~{D:=\min\{D_1, D_2\} \geq 2}$:
\begin{align}
	\begin{aligned}  \label{A-6}
	        & \int_{\mathbb{R}^{2d}} \Big( |x|^{D_1}+ |v|^{D_2} \Big) {f^{\mathrm{in}}(x, v) \di x \di v}=: {\mathcal M}_p(f^{\mathrm{in}}, D_1, D_2) < \infty: \hspace{0.3cm} \mbox{Slow decaying class}, \vspace{8pt}\\
		& \int_{\mathbb{R}^{2d}} e^{\alpha( |x|+|v|^{\delta})} {f^{\mathrm{in}}(x, v) \di x \di v}=: {\mathcal M}_e(f^{\mathrm{in}}, \alpha, \delta) < \infty: \hspace{1.8cm} \mbox{Fast decaying class}.
\end{aligned}
\end{align}
Here, the subscripts in ${\mathcal M}_e$ and ${\mathcal M}_p$ represent the initial of exponential and polynomial decays, respectively. Our second set of main results establishes the weak flocking dynamics of the weak solutions under some suitable framework in terms of initial data and system parameters. First, we consider system parameters and initial datum: 
\begin{align*}
\begin{aligned} 
&0 \leq \beta < 1, \quad \kappa> 0, \quad  \gamma > 1, \quad  \gamma\beta<1,\quad l_1>1, \quad   D_1 > \frac{2l_1}{(l_1-1)\left(\gamma -1\right)},    \\
&  D_2\ge \max\left\{D_1,~2l_1 \right\},\quad {\mathcal M}_p(f^{\mathrm{in}}, D_1,D_2) :=  \int_{\mathbb{R}^{2d}} (|x|^{D_1}+ |v|^{D_2}) {f^{\mathrm{in}}(x, v) \di x \di v} < \infty.
\end{aligned}\end{align*}
{Here, $\gamma$ is an auxiliary scaling parameter in the time-varying effective region defined in \eqref{NewD-4-4}.} Moreover, we use the high-order moment bounds (see Lemma \ref{L3.1} and Corollary \ref{C3.1}) to give an energy estimate on the time-varying effective region. With these estimates, we show that the weak flocking behavior emerges at least algebraically fast (see Theorem \ref{T2.2} and Section \ref{sec:4.1} for details). \newline

Second, we consider system parameters and initial datum:
\[ 0 \leq \beta < 1, \quad \kappa > 0, \quad \alpha > 0, \quad   {\mathcal M}_e(f^{\mathrm{in}}, \alpha) :=  \int_{\mathbb{R}^{2d}} e^{\alpha \left(| x|+| v |\right)} {f^{\mathrm{in}}(x, v) \di x \di v} < \infty.\]
In this case, weak flocking emerges at least super-polynomial fast, i.e., the convergence is faster than any algebraic rate. Furthermore, if we additionally require that system parameters and initial datum satisfy
\[ \delta>1, \quad \beta\in\Bigg[0,\frac{\delta-1}{\delta}\Bigg),\quad  {\mathcal M}_e(f^{\mathrm{in}}, \alpha, \delta) :=\int_{\mathbb{R}^{2d}} e^{\alpha \left(| x|+| v |^{\delta}\right)} {f^{\mathrm{in}}(x, v) \di x \di v} < \infty.\]
Then, we can show that weak flocking emerges at least exponentially fast (see Theorem \ref{T2.2} and Section \ref{sec:4.2}  for details). 
In particular, if we further assume that the solution has compact velocity support, we can still establish that the weak flocking  will emerge at least algebraically fast even for critical exponent $\beta=1$ (see Theorem \ref{T2.2} and Section \ref{sec:4.3} for details). \newline

The rest of this paper is organized as follows. In Section \ref{sec:2}, we recall the basic properties of the weak solution to \eqref{A-1} and  previous results, and then we summarize our main results. In Section \ref{sec:3}, we provide a rigorous proof of the uniqueness of the weak solution.  In Section \ref{sec:4}, we establish weak flocking behaviors \eqref{A-5} of the KCS model for  two types of distribution classes described in \eqref{A-6}. Finally, Section \ref{sec:5} is devoted to a brief summary of our main results and some remaining issues for a future work.

 \vspace{1cm} 
 
\noindent\textbf{Gallery of Notation:} Let $\mu$ be a measure defined on the Euclidean space $\bbr^{n}$. Moreover, we set
\begin{align*}
\begin{aligned}
	\mathcal{P}(\mathbb{R}^{n}) &:= \left\{\mbox{the set of all probability measures $\mu$ defined on $\bbr^n$}\right\}, \\  
	\mathcal{P}_m(\mathbb{R}^{n}) &:=\left\{\mu \in\mathcal{P}(\mathbb{R}^{n}):\int_{\mathbb{R}^{n}} |z|^m \mu (\di z)<\infty\right\}, \quad \mbox{for $m \in  \{ 0\} \cup {\mathbb N}$}.
\end{aligned}
\end{align*}
For $\mu \in {\mathcal P}(\bbr^{2d})$, if $\mu$ is absolutely continuous with respect to the Lebesgue measure $\di x \di v$ and  has a probability density function $f \in L^1(\bbr^{2d})$, we write it as 
\[ \mu(\di x, \di v) = f(x,v) \di x \di v. \]
Let ${\mathcal C}(\Omega),~{\mathcal C}_b(\Omega),~{\mathcal C}_c^1(\Omega) $ and $L^p(\Omega)$ be the  spaces of all continuous functions, bounded and continuous functions, continuously differentiable functions with compact supports and $L^p$-integrable functions on $\Omega$, respectively.  We denote $| \cdot |$ and $\| \cdot \|_p$ by the Euclidean norm on $\bbr^{d}$ and $L^p(\Omega)$, respectively. We also denote $L^\infty_+(\Omega)$ by the space of all bounded and nonnegative measurable functions defined on $\Omega$.  \newline

For $f$ and $g$, we say that $f \lesssim g$ if and only if there exists a positive constant $C$ such that $f \leq C g$, and throughout the paper, we use abbreviated notation:
\[ z = (x,  v), \quad  z_\star = (x_\star,  v_\star), \quad   \di z = \di x \di v, \quad  \di z_\star = \di x_\star \di v_\star.  \]

%%%%%%%%%%%%%%%%%%%%%%%%%%%
%
%.             Section 2
%          
%
%%%%%%%%%%%%%%%%%%%%%%%%%%

\section{Preliminaries}\label{sec:2}
\setcounter{equation}{0}
In this section, we review basic materials and previous results for the KCS model, and then we delineate main results on the uniqueness of weak solution and emergence of weak flocking whose proofs can be provided in later sections.  

\subsection{Preparatory materials} \label{sec:2.1}
In this subsection, we recall all necessary materials to be used in later sections. First, we recall the concept of weak solution to \eqref{A-1}.
\begin{definition}\label{D2.1}
\emph{(Weak solution)}
For $\tau  \in (0,\infty]$, let $f\in {\mathcal C}([0, \tau);  L^1(\mathbb{R}^{2d}))$ be a  nonnegative weak solution to  \eqref{A-1} with the initial datum $f^{\mathrm{in}} \in (L^1 \cap L_+^{\infty})(\mathbb{R}^{2d})$ if the following relations hold:
	\begin{enumerate}
		\item
		$f$ is weakly continuous in $t$:~$\forall~\psi \in {\mathcal C}_c^1(\mathbb{R}^{2d})$, 
		\[
		t \quad \mapsto \quad \int_{\mathbb{R}^{2d}}\psi f \di z \quad\text{is continuous}.
		\]
		\item	
		$f$ satisfies \eqref{A-1} in weak sense:~$ \forall~\zeta \in {\mathcal C}_c^1([0, \tau) \times\mathbb{R}^{2d}),$
\[ \int_{\mathbb{R}^{2d}}\zeta(t, z) f(t, z) \di z =  \int_{\mathbb{R}^{2d}}\zeta(0,z) f^{\mathrm{in}}(z) \di z + \int_0^{\tau} \int_{\mathbb{R}^{2d}}(\partial_s\zeta+v\cdot \nabla_x \zeta +\nabla_v\zeta \cdot L[f])f(s, z) \di z \di s.\]
	\end{enumerate}	
\end{definition}
\vspace{0.2cm}

Next, we recall the concept of $p$-Wasserstein space. Let $\mu$ and $\nu$ be probability measures on $\bbr^{d}$. Then, for  all nonnegative integrable functions $\psi_1$ and $\psi_2$ defined on $\mathbb{R}^{d}$, we define a probability measure $\pi \in {\mathcal P}(\bbr^{d} \times \bbr^d)$ with marginals $\mu$ and $\nu$ as follows.
	\begin{equation*}\label{f3.1}
		\int_{\mathbb{R}^{d}\times\mathbb{R}^{d}}(\psi_1(x)+\psi_2(x_{\star}))\pi(\di x,\di x_{\star})=\int_{\mathbb{R}^{d}}\psi_1(x) \mu(\di x)+\int_{\mathbb{R}^{d}}\psi_2(x_{\star}) \nu(\di x_{\star}).
	\end{equation*}
	We denote $\Pi(\mu, \nu)$ by the set of probability measures on $\bbr^d \times \bbr^d$ with marginals $\mu$ and $\nu$:
	\[ \mu(\di x) = \int_{\bbr^d} \pi(\di x, \di x_\star) \di x_\star, \quad  \nu(\di x_\star) = \int_{\bbr^d} \pi(\di x, \di x_\star) \di x. \]

Next, we recall the basic properties of probability measures in the following definition. 
		\begin{definition}\label{D2.2}
		\emph{($p$-Wasserstein space)}
	 For any $\mu, \nu \in \mathcal{P}(\mathbb{R}^{d})$,  the Wasserstein distance of order $p$ between $\mu$ and $\nu$ (or $p$-Wasserstein distance between $\mu$ and $\nu$) is defined by the formula:
	\begin{equation*}
		W_p(\mu, \nu):=\inf\left\{\left(\int_{\mathbb{R}^{d}\times\mathbb{R}^{d}} |x-x_{\star}|^p\pi(\di x,\di x_{\star})\right)^\frac{1}{p}:\pi\in\Pi(\mu, \nu)\right\},
	\end{equation*}
	where $\Pi(\mu,\nu)$ is the set of all probability measures on $\mathbb{R}^{d}\times\mathbb{R}^{d}$ with marginals $\mu$ and $\nu$ in ${\mathcal P}(\bbr^d)$, respectively. 	
\end{definition}
\begin{remark}
In order to avoid the difficulty that $W_p$ may take the value $+\infty$, we consider $W_m$ on $\mathcal{P}_m(\mathbb{R}^{d})$ for a suitable $m \in {\mathbb N}$.
\end{remark}
In the following lemma, we provide the characterization of the convergence in $W_p$.
\begin{lemma}{\rm \cite{k8}} \label{le2.5}
The following statements hold. 
\begin{enumerate}
\item
The metric space $(\mathcal{P}_p(\mathbb{R}^{d}), W_p)$ is complete.
\vspace{0.1cm}
\item
 Let $(\mu_k)_{k\in\mathbb{N}}$ be a sequence of probability measures in $\mathcal{P}_p(\mathbb{R}^{d})$, and let $\mu$ be another element of $\mathcal{P}_p(\mathbb{R}^{d})$.  Then, the following statements hold.
 \vspace{0.1cm}
 \begin{enumerate}
 \item
  The sequence $(\mu_k)_{k\in\mathbb{N}}$ has uniformly integrable $p$-th moments if for some $x_0\in\mathbb{R}^d$$:$
	\begin{equation*}
		\lim\limits_{r\rightarrow\infty}\int_{\mathbb{R}^d\setminus B_r(x_0)} |x-x_0|^p \mu_k(\di x)=0\quad \text{uniformly with respect to }k\in\mathbb{N},
	\end{equation*}
where $B_r(x_0)$ is the open ball with a center $x_0$ and a radius $r$. 	
\vspace{0.1cm}
\item	
The sequence $(\mu_k)$ converge weakly to $\mu$ if the following relation holds:
	\[
		\lim\limits_{k\rightarrow\infty}\int_{\mathbb{R}^d}\psi(x) \mu_k(\di x)=\int_{\mathbb{R}^d}\psi(x) \mu(\di x), \quad \forall~\psi \in {\mathcal C}_b(\mathbb{R}^d).
	\]
\item
Equivalent relation for the convergence in $W_p$:
	\begin{equation*}
	\lim\limits_{k\rightarrow\infty}W_p(\mu_k, \mu)=0  \quad \Longleftrightarrow \quad
		 \left\{\begin{aligned}
			&\mu_k \quad\mbox{converge weakly to}\quad \mu \quad \mbox{and} \\
			&(\mu_k)_{k\in\mathbb{N}} \quad \mbox{has uniformly integrable p-th moments.}
		\end{aligned}\right.
	\end{equation*}
\end{enumerate}	
\end{enumerate}	
\end{lemma}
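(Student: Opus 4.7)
The lemma makes two substantive assertions: completeness of $(\mathcal{P}_p(\mathbb{R}^{d}), W_p)$ in part (1), and the equivalence in part 2(c) characterizing $W_p$-convergence as weak convergence together with uniform integrability of $p$-th moments. Parts 2(a)-(b) are essentially definitional, giving the standard notion of uniform integrability of $p$-th moments and of weak convergence respectively. My plan is to first establish the equivalence in (c), and then deduce completeness from (c) combined with Prohorov's theorem.

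For the forward direction of (c), suppose $W_p(\mu_k, \mu) \to 0$. Given $\psi \in \mathcal{C}_b(\mathbb{R}^{d})$, I would select an optimal coupling $\pi_k \in \Pi(\mu_k, \mu)$ realizing $W_p$ and write
\[
\left|\int_{\mathbb{R}^{d}} \psi\, \di \mu_k - \int_{\mathbb{R}^{d}} \psi\, \di \mu\right| \leq \int_{\mathbb{R}^{d} \times \mathbb{R}^{d}} |\psi(x) - \psi(x_{\star})|\, \pi_k(\di x, \di x_{\star}).
\]
The right-hand side is controlled by splitting into a large ball where $\psi$ is uniformly continuous (handled by $W_1 \leq W_p$) and the complement, where the uniform bound on $\int |x|^p\, \di\mu_k$ (obtained via $W_p(\mu_k, \delta_{x_0}) \leq W_p(\mu_k, \mu) + W_p(\mu, \delta_{x_0})$) together with a Markov-type tail estimate makes the remainder arbitrarily small. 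Uniform integrability of $p$-th moments is then established similarly: for large $r$, the tail $\int_{|x-x_0|>r} |x-x_0|^p\, \mu_k(\di x)$ is dominated by the corresponding tail of $\mu$ plus a $W_p$-error through a convexity/triangle argument and Minkowski's inequality in $L^p(\pi_k)$.

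For the reverse direction, assume $\mu_k$ converges weakly to $\mu$ with uniformly integrable $p$-th moments. Invoke the Skorohod representation theorem on a suitable probability space to obtain random variables $X_k \to X$ almost surely with laws $\mu_k$ and $\mu$. The hypothesis gives uniform integrability of $|X_k|^p$, Fatou transfers integrability to $|X|^p$, and the convexity inequality $|X_k - X|^p \leq 2^{p-1}(|X_k|^p + |X|^p)$ yields uniform integrability of $|X_k - X|^p$. Vitali's convergence theorem then gives $\mathbb{E}|X_k - X|^p \to 0$, which dominates $W_p(\mu_k, \mu)^p$ since the joint law of $(X_k, X)$ is an admissible coupling. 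For completeness, take $(\mu_k)$ Cauchy in $W_p$; uniform $p$-th moment bounds follow, which imply tightness, so Prohorov yields a subsequence $\mu_{k_j}$ converging weakly to some $\mu$. Fatou applied to the moments places $\mu$ in $\mathcal{P}_p(\mathbb{R}^{d})$, the Cauchy property along the subsequence delivers uniform integrability of $p$-th moments, and part (c) upgrades weak convergence to $W_p$-convergence. A standard Cauchy-subsequence argument then promotes subsequential convergence to full convergence.

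The main obstacle is the careful bookkeeping of tails and uniform integrability throughout. In the forward direction, one must justify that $W_p$-convergence alone forces the $L^p$-tails of $\mu_k$ to be uniformly small; this is not transparent from the definition and requires coupling to $\mu$ together with $\mu$'s own integrable tail, which is the content of the Minkowski-type estimate on $\pi_k$. In the reverse direction, the subtlety lies in the fact that Vitali needs uniform integrability of $|X_k - X|^p$ itself, not merely of $|X_k|^p$, and bridging this gap relies essentially on the almost-sure convergence supplied by Skorohod together with the convexity inequality above. Once these technical points are handled, the remaining steps are routine applications of Prohorov's theorem and the triangle inequality in $W_p$.
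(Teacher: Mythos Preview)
The paper does not prove this lemma at all: it is stated with the citation \cite{k8} (Villani, \emph{Optimal Transport Old and New}) and no proof is given, so there is no ``paper's own proof'' to compare against. Your sketch is a correct outline of the standard argument (optimal couplings plus tail control for the forward direction, Skorohod and Vitali for the reverse, Prohorov for completeness), and would be acceptable as a self-contained proof; but for the purposes of this paper the result is simply quoted from the reference.
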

\begin{remark}\label{D2.3}
	\emph{(Support of measure and push-forward measure)} Let $\mu$ be a Borel measure on $\mathbb{R}^{2d}$. 
	\begin{enumerate}
		\item
		The support of  $\mu$ is the closure of the set $\{(x,v)\in\mathbb{R}^{2d}:~\mu(B_r(x,v))>0, \forall\ r>0\}$, and we denote it by $\mbox{\rm spt}(\mu)$. 
		\vspace{0.2cm}
		\item
		Let $T: \mathbb{R}^{d}\rightarrow\mathbb{R}^{d}$ be a measurable map. Then, the push-forward measure of $\mu$ by $T$ is the measure $T\# \mu$ defined by $T\# \mu(B)=\mu(T^{-1}(B))$, for all Borel set $B \subset\mathbb{R}^{d}$.
	\end{enumerate}
\end{remark}

\vspace{0.2cm}

Next, we describe a particle trajectory associated with the KCS model. In literature, particle trajectory is also called "bi-characteristics or simply characteristics". For $z = (x, v) \in {\mathbb R}^d \times {\mathbb R}^d$, we define a particle trajectory 
\[ (X(t), V(t)) : = (X(t,0,z), V(t,0,z)) \]
as a unique solution to the following Cauchy problem:
\begin{equation} \label{B-1}
	\begin{cases}
		\displaystyle \dot X(t)=V(t), \quad t > 0,   \\
		\displaystyle \dot V(t)=L[f](t,X(t),V(t)), \\
		\displaystyle X(0)=x, \quad  V(0)=v,
	\end{cases}
\end{equation}
where the linear velocity alignment force along the particle trajectory is given by the following relation:
\begin{equation} \label{B-1-1}
 L[f](t,X(t),V(t))=-\kappa\int_{\mathbb{R}^{2d}} \phi( |X(t) -x_{\star}|) \left(V(t)-v_{\star}\right)  f(t, x_{\star}, v_{\star}) \di x_{\star} \di v_{\star}.
\end{equation}
Note that $L[f]$ in \eqref{B-1-1} is continuous in $t$ and Lipschitz continuous in state variables $(X, V)$, thus the standard Cauchy-Lipschitz theory provides a local well-posedness of  particle trajectories near $t = 0$. Moreover,
 the vector field generated by the right-hand side of \eqref{B-1} is locally bounded in $X$ and sub-linear in $V$, hence particle trajectory is globally well-defined. Thus the particle trajectory map $( X(t,0,x,v), V(t,0,x,v))$ is a well-defined homeomorphism for each fixed time $t$ and a ${\mathcal C}^1$-function of time $t$, and the weak solution $f$ to \eqref{A-1} is given by the push-forward of the initial measure $\mu^{\mathrm{in}}(\di x, \di v) = f^{\mathrm{in}}(x,v) \di x \di v$:
\[ f(t,z)=(X(t,0,z),V(t,0,z))\#f^{\mathrm{in}}, \]
or equivalently,
\begin{equation} \label{B-1-2}
	\int_{\mathbb{R}^{2d}}\psi(z) f(t, z) \di z =\int_{\mathbb{R}^{2d}}\psi(X(t,0,z),V(t,0,z))f^{\mathrm{in}}(z) \di z, \quad \forall~\psi\in {\mathcal C}_b^1(\mathbb{R}^{2d}).
\end{equation}
As a direct application of \eqref{B-1-2}, we have the following elementary estimates. 
\begin{lemma} \label{L2.2}
Let $f = f(t,z)$ be a smooth solution to \eqref{A-1} with sufficiently fast decay at infinite in phase space, and let $(X(t), V(t))$ be an associated particle trajectory defined in \eqref{B-1}. Then, the following estimates hold. 
\begin{align*}
\begin{aligned}
& (i)~  \int_{\mathbb{R}^{2d}}  f(t,z) \di z = \int_{\mathbb{R}^{2d}}  f^{\mathrm{in}}(z) \di z, \quad \int_{\mathbb{R}^{2d}}  v f(t,z) \di z = \int_{\mathbb{R}^{2d}} v f^{\mathrm{in}}(z) \di z. \\
& (ii)~  \int_{\mathbb{R}^{2d}} v f(t,z) \di z = \int_{\mathbb{R}^{2d}} V(t) f^{\mathrm{in}}(z) \di z. \\
& (iii)~  \int_{\mathbb{R}^{2d}} |v|^D f(t,z) \di z = \int_{\mathbb{R}^{2d}} |V(t)|^D f^{\mathrm{in}}(z) \di z  \quad \mbox{for}~D \geq 1. \\
& (iv)~ \int_{\mathbb{R}^{2d}} | v- v_c(t) |^2f(t,z) \di z = \int_{\mathbb{R}^{2d}} |V(t)- v^{\mathrm{in}}_c|^2f^{\mathrm{in}}(z) \di z.
\end{aligned}
\end{align*}
\end{lemma}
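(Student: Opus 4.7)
The plan is to reduce everything to the push-forward identity (B-1-2), which already encodes the whole transport structure of the equation. The only statement that does not come for free from (B-1-2) is the momentum conservation half of (i); that one must be extracted from the PDE itself.

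\textbf{Step 1 (mass and momentum conservation).} For mass, I would integrate \eqref{A-1} over $\bbr^{2d}$; the spatial divergence integrates to zero by decay at infinity, and so does the velocity divergence, leaving $\tfrac{d}{dt}\int f\di z=0$. For momentum, multiply \eqref{A-1} by $v$ and integrate by parts. The transport term produces $\int v(v\cdot\nabla_x f)\di z=\nabla_x\cdot\int v\otimes v\, f\,\di z$ which integrates to zero in $x$, while the velocity divergence term, after an integration by parts in $v$, yields
\[
\int_{\bbr^{2d}} L[f]\,f\,\di z=-\kappa\int_{\bbr^{4d}}\phi(|x-x_\star|)(v-v_\star)f(z)f(z_\star)\di z\di z_\star=0,
\]
by the usual antisymmetry under swapping $(z,z_\star)$, since $\phi(|x-x_\star|)$ is symmetric and $(v-v_\star)$ is antisymmetric.

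\textbf{Step 2 (identities (ii)--(iv) via push-forward).} For (ii), I would pick $\psi(z)=v$ and insert it into \eqref{B-1-2}. Strictly, $\psi$ is not in $\mathcal{C}_b^1$, so I would approximate by $\psi_R(z)=v\,\chi_R(v)$ with $\chi_R\in\mathcal{C}_c^1$, $\chi_R\equiv 1$ on $B_R$, and let $R\to\infty$; the sufficient decay of $f$ at infinity (which the statement assumes) and Lemma~\ref{le2.5}(2)(c) supply the uniform integrability needed to pass to the limit on both sides. Identity (iii) is obtained in exactly the same manner with $\psi(z)=|v|^D$ and the same truncation argument; the $L^1(f^{\mathrm{in}})$-integrability of $|V(t)|^D$ is guaranteed by the assumed decay. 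For (iv), I would first combine (i) with (ii) to get $v_c(t)=v_c^{\mathrm{in}}$ (so that the centering constant coincides on both sides), and then apply \eqref{B-1-2} with the test function $\psi(z)=|v-v_c^{\mathrm{in}}|^2$, again approximating by a compactly supported cut-off.

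\textbf{Main obstacle.} The only real technical point is that the natural test functions for (ii)--(iv) are unbounded, while \eqref{B-1-2} is stated for $\psi\in\mathcal{C}_b^1$. The assumption of ``sufficiently fast decay at infinity in phase space'' for the smooth solution $f$ is exactly what one needs in order to truncate and pass to the limit, and it is propagated along trajectories by Step~1 together with the sub-linear growth of the vector field in \eqref{B-1} (so that $|V(t)|^D$ remains integrable against $f^{\mathrm{in}}$ on any finite time window). Once this routine approximation step is justified, all four identities follow directly.
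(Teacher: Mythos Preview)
Your proposal is correct and follows essentially the same route as the paper: part (i) is obtained by integrating the PDE and using the antisymmetry of the interaction under $(z,z_\star)\leftrightarrow(z_\star,z)$, while (ii)--(iii) come from the push-forward identity \eqref{B-1-2} applied to cut-offs of $v$ and $|v|^D$. The only cosmetic difference is in (iv): the paper expands the square $|V(t)-v_c^{\mathrm{in}}|^2$ and reduces to (i)--(iii), whereas you apply \eqref{B-1-2} directly to $\psi(z)=|v-v_c^{\mathrm{in}}|^2$ after noting $v_c(t)=v_c^{\mathrm{in}}$; both are equivalent, and the truncation is justified by dominated convergence rather than the Wasserstein lemma you cite.
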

\begin{proof}
\noindent (i)~The first relation follows from the direct integration of $\eqref{A-1}_1$ in phase space $\bbr^{2d}$ using the sufficient fast decay conditions. For the second estimate, we multiply $v$ to $\eqref{A-1}_1$ to see
\[
 \partial_t (v f) + \nabla_x \cdot (v \otimes v f) + \nabla_v \cdot (v \otimes L[f]f)= d L[f] f.
\]
We integrate the above relation over $\bbr^{2d}$ using the suitable decay conditions at infinity to get the desired conservation of momentum:
\begin{align*}
\begin{aligned}
\frac{\di}{\di t} \int_{\bbr^{2d}} v f \di z & =d  \int_{\bbr^{2d}}  L[f] f \di z_{\star} \di z \\
&= -d \kappa \int_{\bbr^{4d}} \phi(|x - x_{\star}|) (v-v_{\star})  f(t, z) f(t, z_{\star}) \di z_{\star} \di z \\
& = d \kappa \int_{\bbr^{4d}} \phi(|x - x_{\star}|) (v-v_{\star})  f(t, z) f(t, z_{\star}) \di z_{\star} \di z \\
& = 0. 
\end{aligned}
\end{align*}
Here we use the variable exchange transformation:
\[ (x, v) \quad \Longleftrightarrow \quad (x_{\star}, v_{\star}). \]
 
\noindent (ii) - (iii):~We can take smooth cut-off functions of $v, |v|^D$ as a test function $\psi$ and use \eqref{B-1-2} to see the desired estimates (see Lemma \ref{L3.1} for the smooth cut-off function). In particular, 
\[  \int_{\mathbb{R}^{2d}} v f(t,z) \di z = \int_{\mathbb{R}^{2d}} V(t) f^{\mathrm{in}}(z) \di z, \quad \int_{\mathbb{R}^{2d}} |v|^2 f(t,z) \di z = \int_{\mathbb{R}^{2d}} |V(t)|^2 f^{\mathrm{in}}(z) \di z. \]
(iv) We use all the estimates in (i) - (iii) to see
\begin{align*}
\begin{aligned} 
&  \int_{\mathbb{R}^{2d}} |V(t)- v^{\mathrm{in}}_c|^2 f^{\mathrm{in}}(z) \di z \\
& \hspace{1cm} =  \int_{\mathbb{R}^{2d}} |V(t)|^2  f^{\mathrm{in}}(z) \di z - 2 \Big \langle \int_{\bbr^{2d}} V(t) f^{\mathrm{in}}(z) \di z, v_c^{\mathrm{in}} \Big \rangle + |v_c^{\mathrm{in}}|^2   \int_{\bbr^{2d}}  f^{\mathrm{in}}(z) \di z \\
&  \hspace{1cm}=  \int_{\mathbb{R}^{2d}} |v|^2 f(t,z) \di z -2 \Big \langle  \int_{\mathbb{R}^{2d}} v f(t,z) \di z, v_c^{\mathrm{in}}\Big \rangle   + |v_c^{\mathrm{in}}|^2  \\
&  \hspace{1cm}= \int_{\mathbb{R}^{2d}} | v- v_c^{\mathrm{in}} |^2f(t,z) \di z  =  \int_{\mathbb{R}^{2d}} | v- v_c(t)|^2 f(t,z) \di z.
\end{aligned}
\end{align*}
\end{proof}

\subsection{Previous results} \label{sec:2.2}
In this subsection, we summarize the previous results of  the KCS model in the following two propositions. 
\begin{proposition} 
\emph{\cite{k4,k1,w2,w6}} \label{le:2.3}
For $\tau \in (0, \infty]$, suppose that the initial data $f^{\mathrm{in}},g^{\mathrm{in}}\in (L^1 \cap L^{\infty}_+)(\mathbb{R}^{2d})$ have compact support in velocity variable, i.e., there exists a positive constant $P_\infty$ such that 
\begin{equation*}\label{eq:4.312}
	P_\infty :=\max\left\{\sup\limits_{(x,v)\in{\rm spt}(f^{\mathrm{in}})} |v|,\sup\limits_{(x,v)\in{\rm spt}(g^{\mathrm{in}})} |v|\right\}<\infty.
\end{equation*}
Then, there exist global unique nonnegative weak solutions $f,g\in L^{\infty}([0, \infty), L^1(\mathbb{R}^{2d}))$ to \eqref{A-1} with initial data $f^{\mathrm{in}}$ and $g^{\mathrm{in}}$, respectively, with the following properties:
\vspace{0.1cm}
\begin{enumerate}
	\item 
	(Propagation of compact supports in velocity variable): 
	\begin{equation}\label{eqt4.40}
		\sup\limits_{(x,v)\in{\rm spt}(f(t))} |v|\le P_\infty ,  \quad  \sup\limits_{(x,v)\in{\rm spt}(g(t))} |v|\le P_\infty, \quad t \in [0, \infty).
	\end{equation} 
	\item	
	(Conservation of total mass and its average velocity):
	\[ \frac{\di}{\di t}\int_{\mathbb{R}^{2d}}f(t, z) \di z=0, \quad  \frac{\di}{\di t}\int_{\mathbb{R}^{2d}}vf(t, z) \di z =0, \quad t \in [0, \infty). 
	\]
	\item
	(Finite-in-time stability in $W_p$):~ there exists a positive constant $C(\tau) = C(P_\infty, \beta, p, \tau)$ such that
	\begin{equation*}
		W_p(f(t),g(t))\le C(\tau) W_p(f^{\mathrm{in}},g^{\mathrm{in}}), \quad \forall~t\in[0, \tau),
	\end{equation*}
	where $\beta$ is the spatial decay of the communication weight $\phi$ (see \eqref{A-0-1}).
	\vspace{0.1cm}
\item	
(Weak flocking dynamics):~for $\beta\in [0,1)$, if the initial probability density function $f^{\mathrm{in}}$ of the initial measure $\mu^{\mathrm{in}}(\di x, \di v) = f^{\mathrm{in}} \di x \di v$ has an exponential decay $(\alpha > 0)$ or a polynomial decay with ($D>\frac{7}{1-\beta}$) in space variable $x$, and the coupling strength $\kappa$ is sufficiently large, then weak flocking \eqref{A-5} emerges asymptotically. 
\end{enumerate}	
\end{proposition}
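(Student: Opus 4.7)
The plan is to break the proposition into a well-posedness backbone and the four listed conclusions. Under the hypothesis that $f^{\mathrm{in}}$ has bounded velocity support of radius $P_\infty$, the drift $L[f]$ in \eqref{B-1-1} is bounded by $\kappa\|\phi\|_{\infty}(|V|+P_\infty)$ and Lipschitz in $(X,V)$ (since $\phi$ is smooth and $f$ is a probability measure), so Cauchy--Lipschitz produces a global $C^1$ flow \eqref{B-1}, and the unique weak solution in the sense of Definition \ref{D2.1} is recovered via the push-forward formula \eqref{B-1-2}, $f(t)=(X(t,0,\cdot),V(t,0,\cdot))\# f^{\mathrm{in}}$.

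For item (1), I would differentiate along a characteristic to get
\[
\frac{\di}{\di t}|V(t)|^2=-2\kappa\int_{\bbr^{2d}}\phi(|X(t)-x_\star|)\bigl(|V(t)|^2-V(t)\cdot v_\star\bigr)f(t,z_\star)\,\di z_\star,
\]
and, setting $V^*(t):=\sup_{z\in\mathrm{spt}(f^{\mathrm{in}})}|V(t,0,z)|$, Cauchy--Schwarz yields $V(t)\cdot v_\star\le V^*(t)^2$ at any maximizing sequence, hence $V^*(t)\le P_\infty$. Item (2) is the content of the computation already carried out in the proof of Lemma \ref{L2.2}: test \eqref{A-1} with $\psi\equiv 1$ for mass and with $\psi(v)=v$ for momentum, and use the antisymmetry $(z,z_\star)\leftrightarrow(z_\star,z)$ of the Cucker--Smale kernel to kill the double integral. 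For the Wasserstein stability in (3), I would pick $\pi^{\mathrm{in}}\in\Pi(f^{\mathrm{in}},g^{\mathrm{in}})$ optimal for $W_p$, push it forward by the joint characteristic flow to $\pi_t:=(X_f,V_f,X_g,V_g)\#\pi^{\mathrm{in}}\in\Pi(f(t),g(t))$, and use the uniform bound $P_\infty$ together with the Lipschitz continuity of $\phi$ to obtain the pointwise estimate
\[
\frac{\di}{\di t}\bigl(|X_f-X_g|^p+|V_f-V_g|^p\bigr)\lesssim |X_f-X_g|^p+|V_f-V_g|^p+W_p(f(t),g(t))^p,
\]
with constants depending only on $P_\infty$, $\beta$, $p$; integrating against $\pi^{\mathrm{in}}$, bounding the left by $W_p(f(t),g(t))^p$, and applying Grönwall on $[0,\tau)$ closes the argument.

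The main obstacle is item (4). Define the Lyapunov functional $\mathcal{V}(t):=\int_{\bbr^{2d}}|v-v_c^{\mathrm{in}}|^2 f(t,z)\,\di z$; by the antisymmetric structure of the Cucker--Smale interaction one has the classical dissipation identity
\[
\frac{\di}{\di t}\mathcal{V}(t)=-\kappa\iint_{\bbr^{4d}}\phi(|x-x_\star|)|v-v_\star|^2 f(t,z)f(t,z_\star)\,\di z\,\di z_\star.
\]
In the compact-support regime of \cite{k2,k4} one would immediately bound $\phi(|x-x_\star|)$ below by $\phi(\mathrm{diam}_x f(t))$ and exploit a linear-in-$t$ growth of $\mathrm{diam}_x f(t)$; here the spatial support is unbounded, so the plan following \cite{w2,w6} is to split the double integral over $\{|x-x_\star|\le R(t)\}$ and its complement, with $R(t)$ chosen to grow mildly faster than linearly. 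On the good region the coercive term $\phi(R(t))\mathcal{V}(t)$ survives; on the complement the mass is shown to be small using the assumed decay of $f^{\mathrm{in}}$ (either $\mathcal{M}_e(f^{\mathrm{in}},\alpha)<\infty$ or $\mathcal{M}_p(f^{\mathrm{in}},D)<\infty$ with $D>7/(1-\beta)$) together with the linear-in-$t$ envelope $|X(t)|\le |x|+P_\infty t$ provided by item (1). For $\kappa$ sufficiently large, balancing the two regions produces a differential inequality of the form $\mathcal{V}'(t)\le -\eta(t)\mathcal{V}(t)+\delta(t)$ with $\int_0^\infty \eta(s)\,\di s=\infty$ and $\delta\in L^1(\bbr_+)$, forcing $\mathcal{V}(t)\to 0$ at an algebraic (resp.\ faster) rate. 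The cohesion half of \eqref{A-5} is then obtained by differentiating the spatial second moment $S(t):=\int|x-x_c^{\mathrm{in}}-v_c^{\mathrm{in}} t|^2 f\,\di z$, using Cauchy--Schwarz to get $S'(t)\le 2\sqrt{S(t)\mathcal{V}(t)}$, and integrating; the integrability of $\sqrt{\mathcal{V}(t)}$ furnishes the uniform-in-time bound on $S(t)$.
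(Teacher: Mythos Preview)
The paper does not prove this proposition; it is stated in Section~\ref{sec:2.2} as a summary of prior results from \cite{k4,k1,w2,w6} and carries no proof here. Your sketch is a faithful and correct reconstruction of those arguments: the characteristic-flow well-posedness via Cauchy--Lipschitz and the push-forward representation, the maximum-principle argument for (1), the antisymmetry cancellation for (2) (which the paper does reproduce in Lemma~\ref{L2.2}), the optimal-coupling Gr\"onwall estimate for (3), and the dissipation-plus-truncation scheme for (4) all match the standard treatments in the cited works. In particular, your strategy for item~(4)---the identity for $\mathcal V'$, a spatial cut at a super-linearly growing radius, a tail-mass bound via the decay hypothesis on $f^{\mathrm{in}}$ combined with the envelope $|X(t)|\le|x|+P_\infty t$, and the closure $S'(t)\le 2\sqrt{S(t)\mathcal V(t)}$---is exactly the template the present paper itself follows in Section~\ref{sec:4} for Theorem~\ref{T2.2} (there with the equivalent splitting $|X(t)|\le R_x(t)$, $|X_\star(t)|\le R_x(t)$ rather than $|x-x_\star|\le R(t)$). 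Two small cautions: your well-posedness paragraph and item~(1) are mildly intertwined, since bounding $|L[f]|$ by $\kappa\|\phi\|_\infty(|V|+P_\infty)$ already presupposes the support propagation you are about to prove, so the clean version is short-time existence followed by a continuation/bootstrap; and in item~(3) the pointwise differential inequality as written requires a Young step to pass from $|V_f-V_g|^{p-1}W_1(f,g)$ to a sum of $p$-th powers before integrating against $\pi^{\mathrm{in}}$ and closing with Gr\"onwall.
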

\vspace{0.2cm}

\noindent Next,  we state the existence of a weak solution to \eqref{A-1}.
\begin{proposition}  \label{P2.2}
	\emph{\cite{kinetic3}} \label{pro:2.4}
	Suppose that the initial probability density function \( f^{\mathrm{in}} \) satisfies
	\[
	f^{\mathrm{in}} \in (L^1 \cap L^{\infty}_+)(\mathbb{R}^{2d}), \quad (|x|^2 + |v|^2)f^{\mathrm{in}} \in L^1(\mathbb{R}^{2d}).
	\]
	Then, there exists a weak solution $f \in {\mathcal C}([0, \tau); L^1(\mathbb{R}^{2d})) \cap L_+^\infty([0, \tau) \times \mathbb{R}^{2d})$  to \eqref{A-1} such that
	\[
	( |x|^2 + |v|^2)f(t) \in L^1(\mathbb{R}^{2d}), \quad 	\int_{\mathbb{R}^{2d}}f(t, z) \di z =\int_{\mathbb{R}^{2d}}f^{\mathrm{in}}(z) \di z, \quad t \in (0, \tau).
	\]
\end{proposition}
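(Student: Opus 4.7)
The plan is to construct the weak solution as a stability limit of weak solutions with compactly-supported initial data, for which Proposition \ref{le:2.3} already provides existence. Concretely, I would introduce a smooth cut-off $\chi_R\in \mathcal{C}_c^\infty(\mathbb{R}^{2d})$ with $\chi_R=1$ on $B_R(0)$ and $\chi_R=0$ outside $B_{2R}(0)$, and define truncated initial data $f_R^{\mathrm{in}}:=\chi_R f^{\mathrm{in}}$. Then $f_R^{\mathrm{in}}\in (L^1\cap L^\infty_+)(\mathbb{R}^{2d})$ with compact support, and by dominated convergence, $f_R^{\mathrm{in}}\to f^{\mathrm{in}}$ in $L^1$ together with $(|x|^2+|v|^2)f_R^{\mathrm{in}}\to (|x|^2+|v|^2)f^{\mathrm{in}}$ in $L^1$ as $R\to\infty$. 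Proposition \ref{le:2.3} yields a unique global weak solution $f_R\in L^\infty([0,\tau); L^1(\mathbb{R}^{2d}))$ for each $R$.

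Next, I would establish $R$-uniform a priori estimates. Mass conservation gives $\|f_R(t)\|_1=\|f_R^{\mathrm{in}}\|_1\le \|f^{\mathrm{in}}\|_1$. Since $\nabla_v\cdot L[f_R]=-d\kappa(\phi*_x \rho_R)$ with $\rho_R=\int f_R\,\di v$ and $\phi\le 1$, transport along the characteristics \eqref{B-1} yields $\|f_R(t)\|_\infty\le e^{d\kappa t}\|f^{\mathrm{in}}\|_\infty$. The kinetic energy identity reads
\begin{equation*}
\frac{\di}{\di t}\int_{\mathbb{R}^{2d}}|v|^2 f_R \,\di z = -\kappa\int_{\mathbb{R}^{4d}}\phi(|x-x_\star|)|v-v_\star|^2 f_R(z)f_R(z_\star)\,\di z\,\di z_\star\le 0,
\end{equation*}
so the second velocity moment is bounded in time. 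Combining this with $\frac{\di}{\di t}\int|x|^2 f_R\,\di z = 2\int x\cdot v\,f_R\,\di z \le \int(|x|^2+|v|^2)f_R\,\di z$ and Gronwall's lemma, we obtain a uniform bound
\begin{equation*}
\sup_{R>0}\sup_{t\in[0,\tau)}\int_{\mathbb{R}^{2d}}(|x|^2+|v|^2)f_R(t,z)\,\di z \le C(\tau,f^{\mathrm{in}}).
\end{equation*}

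In the third step I would extract a limit. The uniform $L^\infty$ bound and the second-moment bound, via Dunford-Pettis, give weak $L^1$ compactness of $\{f_R\}$. Using the KCS equation itself, $\partial_t f_R = -\nabla_x\cdot(vf_R)-\nabla_v\cdot(L[f_R]f_R)$ is bounded in a negative Sobolev space uniformly in $R$, giving equicontinuity in time in a suitable weak topology. A standard Aubin–Lions-type argument then produces a subsequence $f_{R_k}\rightharpoonup f$ with $f\in \mathcal{C}([0,\tau);L^1(\mathbb{R}^{2d}))\cap L^\infty_+([0,\tau)\times\mathbb{R}^{2d})$, preserving mass and the second-moment bound by Fatou.

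The main obstacle, as usual for Vlasov-type equations, is passing to the limit in the nonlinear term $\int\nabla_v\zeta\cdot L[f_{R_k}]f_{R_k}\,\di z$. I would split
\begin{equation*}
L[f_{R}](t,x,v)=-\kappa v\,(\phi *_x \rho_R)(t,x)+\kappa\int_{\mathbb{R}^{2d}}\phi(|x-x_\star|)v_\star f_R(t,z_\star)\,\di z_\star,
\end{equation*}
where the first term is handled easily since $\phi\in \mathcal{C}_b$ and $\rho_R$ converges weakly. For the second term, the unbounded factor $v_\star$ is controlled using the uniform second-moment bound via the truncation
\begin{equation*}
\int_{|v_\star|>M}|v_\star|f_R(t,z_\star)\,\di z_\star \le \frac{1}{M}\int_{\mathbb{R}^{2d}}|v_\star|^2 f_R(t,z_\star)\,\di z_\star\le \frac{C(\tau)}{M},
\end{equation*}
so the high-velocity tail is uniformly negligible, while on $\{|v_\star|\le M\}$ the weighted density $v_\star\mathbf{1}_{|v_\star|\le M}f_R$ converges weakly. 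Since $\phi(|x-x_\star|)$ is bounded and continuous, this yields strong convergence of $L[f_{R_k}]$ in $L^p_{\mathrm{loc}}$, which combined with weak $L^1$ convergence of $f_{R_k}$ and the smoothness of $\nabla_v\zeta$ is enough to pass to the limit in the weak formulation of Definition \ref{D2.1}, completing the construction.
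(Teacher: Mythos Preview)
The paper does not prove Proposition~\ref{P2.2}; it is quoted from \cite{kinetic3} (Karper--Mellet--Trivisa) and used as a black box. So there is no in-paper proof to compare against, and your task was really to reconstruct an argument for a cited result.

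That said, your sketch is essentially the standard route taken in \cite{kinetic3}: truncate the initial data, invoke the compactly-supported theory, derive uniform $L^1$, $L^\infty$, and second-moment bounds, extract a weak limit, and pass to the limit in the nonlinear term using that $L[f_R]$ gains regularity from the convolution with the Lipschitz kernel $\phi$. Your treatment of the tail $\{|v_\star|>M\}$ via the second velocity moment is exactly the right mechanism for handling the unbounded factor $v_\star$ in the alignment force.

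Two points deserve a bit more care. First, your claim that the limit lies in $\mathcal C([0,\tau);L^1(\mathbb R^{2d}))$ with \emph{strong} $L^1$-continuity does not follow from Dunford--Pettis plus Aubin--Lions alone; that combination gives continuity only in a weak or negative-Sobolev topology. Strong $L^1$-continuity requires an extra argument, e.g.\ representing the limit as a push-forward along the characteristic flow \eqref{B-1} once $L[f]$ is known to be Lipschitz in $(x,v)$, or alternatively showing that no mass escapes and using weak continuity plus tightness. Second, for the product $L[f_{R_k}]f_{R_k}$ you assert strong $L^p_{\mathrm{loc}}$ convergence of $L[f_{R_k}]$; to justify this cleanly you should note that $\phi$ is Lipschitz, so $\{\phi*_x\rho_{R_k}\}$ is equi-Lipschitz and uniformly bounded, whence Arzel\`a--Ascoli upgrades the pointwise convergence (from weak convergence of $\rho_{R_k}$) to local uniform convergence. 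With these two clarifications your outline is a correct proof.
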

\begin{remark}\label{R2.3} 
	Below, we comment on the results of Proposition \ref{P2.2}. 
\begin{enumerate}
\item
Thanks to Proposition \ref{pro:2.4}, without loss of generality, we may assume 
	\[ \|f(t)\|_{1}\equiv1 \quad \forall~t \in [0, \tau). \]
\item	
By the same argument as in the proof of Proposition \ref{P2.2}, if $f^{\mathrm{in}}$ satisfies
	\[
	f^{\mathrm{in}} \in (L^1 \cap L_+^\infty)(\mathbb{R}^{2d}), \quad (|x|^D + |v|^D)f^{\mathrm{in}} \in L^1(\mathbb{R}^{2d}),
	\]
		then, there exists a weak solution $f \in {\mathcal C}([0, \tau); L^1(\mathbb{R}^{2d})) \cap L_+^\infty([0, \tau) \times \mathbb{R}^{2d})$  to \eqref{A-1} such that
	\[
	( |x|^D + |v|^D)f(t) \in L^1(\mathbb{R}^{2d}), \quad 	\int_{\mathbb{R}^{2d}}f(t, z) \di z =\int_{\mathbb{R}^{2d}}f^{\mathrm{in}}(z) \di z, \quad t \in (0, \tau).
	\]
\end{enumerate}	
\end{remark}
\subsection{Description of main results} \label{sec:2.3}
In this subsection, we discuss our main results on the uniqueness of weak solution and weak flocking behaviors of weak solutions. \newline

The first main result is concerned with the uniqueness property of a weak solution to \eqref{A-1} in which existence is already guaranteed in Proposition \ref{P2.2}.
\begin{theorem} \label{T2.1}
\emph{(Uniqueness of weak solution)}
Suppose that the initial datum \( f^{\mathrm{in}} \) satisfies
\[
f^{\mathrm{in}} \in (L^1 \cap L_+^{\infty})(\mathbb{R}^{2d}) \quad \mbox{and} \quad  (|x|^2 + {e^{\alpha|v|}})f^{\mathrm{in}} \in L^1(\mathbb{R}^{2d}), \quad \alpha>0.
\]
Then, a weak solution to \eqref{A-1}  is unique.
\end{theorem}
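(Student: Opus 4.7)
The plan is to set up a closed integral inequality for a suitable trajectory-deviation functional and conclude via a Grönwall-type argument. By Proposition \ref{P2.2} and the assumption $(|x|^2+|v|^2)f^{\mathrm{in}} \in L^1$, both candidate weak solutions $f$ and $g$ with common datum $f^{\mathrm{in}}$ propagate the same second moment, and the vector field in \eqref{B-1} admits well-defined characteristic flows $(X_f(t,z),V_f(t,z))$ and $(X_g(t,z),V_g(t,z))$. Both solutions therefore enjoy the push-forward representations $f(t)=(X_f,V_f)(t,\cdot)\#f^{\mathrm{in}}$ and $g(t)=(X_g,V_g)(t,\cdot)\#f^{\mathrm{in}}$, as in Section \ref{sec:2.1}. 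Every phase-space integral against $f(t)$ or $g(t)$ can thus be rewritten against the fixed reference measure $f^{\mathrm{in}}(z_*)\,\di z_*$, and uniqueness reduces to showing $(X_f,V_f)\equiv(X_g,V_g)$ for $f^{\mathrm{in}}$-a.e.\ $z$.

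To derive the integral inequality for $\Delta(f,g)$, I would integrate in time the ODEs $\dot X_f-\dot X_g=V_f-V_g$ and $\dot V_f-\dot V_g=L[f](t,X_f,V_f)-L[g](t,X_g,V_g)$, take absolute values, and integrate against $f^{\mathrm{in}}(z)\,\di z$. The $|V_f-V_g|$ contribution is absorbed into $\int_0^t\Delta\,\di s$. For the force difference, I would add and subtract $L[f](t,X_g,V_g)$ and use the push-forward representations to write $L[f]-L[g]$ as a single $f^{\mathrm{in}}$-integral, then split into (i) a piece with a fixed $\phi$-weight multiplied by $(V_f-V_g)-(V_{f,*}-V_{g,*})$, controlled via $\|\phi\|_\infty\le 1$, and (ii) a piece with $\phi$-differences multiplied by $V_g-V_{g,*}$, controlled via $\|\phi'\|_\infty<\infty$. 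The second piece generates cross terms of the form $\int(|X_f-X_g|+|X_{f,*}-X_{g,*}|)(|V_g|+|V_{g,*}|)f^{\mathrm{in}}(z_*)\,\di z_*$ after integrating in $z$ against $f^{\mathrm{in}}$. Because $|V_g|$ is not pointwise bounded, these cross terms are handled by Cauchy-Schwarz combined with the uniform-in-time bound $\sup_s\int|V_g(s,z)|^2 f^{\mathrm{in}}\,\di z=\sup_s\int|v|^2 g(s,z)\,\di z<\infty$, producing the announced
\[
\Delta(f,g)(t)\le C\int_0^t\Delta(f,g)(s)\,\di s+C\int_0^t\sqrt{A(s)}\,\di s,\qquad A(s):=\int_{\bbr^{2d}}|X_f-X_g|^2 f^{\mathrm{in}}\,\di z.
\]

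To close this inequality, I would complement it by a companion bound on $\sqrt A$. From $X_f-X_g=\int_0^s(V_f-V_g)\,\di r$ and Minkowski's inequality in $L^2(f^{\mathrm{in}})$, one obtains $\sqrt{A(s)}\le\int_0^s\sqrt{B(r)}\,\di r$ with $B(r):=\int|V_f-V_g|^2 f^{\mathrm{in}}\,\di z$. A parallel $L^2(f^{\mathrm{in}})$-analysis of $L[f]-L[g]$, once more absorbing each factor of $|V_f|$ or $|V_g|$ into the propagated second moments, yields a coupled estimate of the form $\sqrt{B(r)}\le C\int_0^r(\Delta+\sqrt A+\sqrt B)\,\di\tau$. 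Adding the three inequalities, the sum $\Phi(t):=\Delta(t)+\sqrt{A(t)}+\sqrt{B(t)}$ satisfies $\Phi(t)\le C\int_0^t\Phi(s)\,\di s$ with $\Phi(0)=0$, so the standard Grönwall lemma forces $\Phi\equiv0$ on $[0,\tau)$. Consequently $X_f=X_g$ and $V_f=V_g$ for $f^{\mathrm{in}}$-a.e.\ $z$, and the push-forward identification gives $f\equiv g$.

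The main technical obstacle is the unboundedness of $|V_f|$ and $|V_g|$ in this non-compact setting, which destroys the standard Lipschitz-in-$(X,V)$ estimate for $L[f]$ that underlies the usual uniqueness proof for the KCS model. Every product of the form $|X_f-X_g|\cdot|V_g|$ arising from the Lipschitz bound on $\phi$ has to be split by Cauchy-Schwarz so that the unbounded factor is absorbed into the uniformly bounded second velocity moment, and the remaining factor contributes to $\sqrt A$ or $\sqrt B$. A careful bookkeeping of these cross terms in both the $L^1(f^{\mathrm{in}})$ estimate for $\Delta$ and the $L^2(f^{\mathrm{in}})$ estimate for $\sqrt B$ is the most delicate step; once this bookkeeping is completed, the Grönwall closure and the resulting uniqueness follow in a standard manner.
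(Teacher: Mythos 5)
Your first half follows the paper's route essentially verbatim: you introduce the same trajectory-deviation functional $\Delta(f,g)$, use the push-forward representation to rewrite everything against $f^{\mathrm{in}}$, split $L[f]-L[g]$ into a bounded-$\phi$ piece and a Lipschitz-$\phi$ piece, and handle the cross terms $|V_g|\cdot|X_f-X_g|$ in the $L^1(f^{\mathrm{in}})$ pairing by Cauchy--Schwarz against the propagated second velocity moment. This correctly reproduces the paper's inequality $\Delta(t)\le C\int_0^t\Delta\,\di s+C\int_0^t\sqrt{A(s)}\,\di s$.

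The gap is in your closure. Your companion estimate $\sqrt{B(r)}\le C\int_0^r(\Delta+\sqrt A+\sqrt B)\,\di\tau$ requires putting the pointwise bound on $|L[f](s,X_f,V_f)-L[g](s,X_g,V_g)|$ into $L^2(f^{\mathrm{in}}(\di z))$, and that bound unavoidably contains a product of two functions of $z$ that are both unbounded: after integrating out $z_\star$ against the unit-mass measure $f^{\mathrm{in}}(z_\star)\,\di z_\star$, the Lipschitz-$\phi$ piece leaves a term of the form $|V_g(z)|\,|X_f(z)-X_g(z)|$ (or $|V_f(z)|\,|X_f(z)-X_g(z)|$ with your add-and-subtract of $L[f](t,X_g,V_g)$). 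In the $L^1(f^{\mathrm{in}})$ estimate this is fine, because Cauchy--Schwarz separates it into $\|V_g\|_{L^2(f^{\mathrm{in}})}\cdot\sqrt{A}$. But $\bigl\|\,|V_g|\,|X_f-X_g|\,\bigr\|_{L^2(f^{\mathrm{in}})}$ cannot be ``absorbed into the propagated second moments'': Cauchy--Schwarz now yields $\|V_g\|_{L^4(f^{\mathrm{in}})}\|X_f-X_g\|_{L^4(f^{\mathrm{in}})}$, i.e.\ fourth velocity moments, which are not assumed in Theorem \ref{T2.1} (only $(|x|^2+|v|^2)f^{\mathrm{in}}\in L^1$). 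So the coupled Grönwall system $(\Delta,\sqrt A,\sqrt B)$ does not close under the stated hypotheses. The paper never estimates $B$: it stays entirely at the $L^1$ level and closes the single inequality for $\Delta$ by observing that $\Delta(t)=0$ already forces $A(t)=0$ (since $|X_f-X_g|f^{\mathrm{in}}=0$ a.e.\ is equivalent to $|X_f-X_g|^2f^{\mathrm{in}}=0$ a.e.), combined with a continuity argument starting from $\Delta(0)=0$. If you want to keep your $L^2$ bookkeeping, you would either need to strengthen the hypothesis to finite fourth moments, or find a different decomposition that avoids the product of $|V|$ with $|X_f-X_g|$ inside the $L^2$ norm.
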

\begin{proof}  Since the proof is very lengthy, we postpone the proof in Section \ref{sec:3.2}.
\end{proof}
\begin{remark}
Different from Chen and Yin \cite{k6}, who used an energy method to obtain the uniqueness of the solution when $\beta>1$, we mainly use characteristic flow estimation and uniform boundedness of velocity moments.
\end{remark}
Next, the second result deals with the derivation of weak flocking properties. For this, we introduce two types of probability distribution functions with decay conditions at infinity in phase space.

\begin{definition} \label{D2.4}
Let $h = h(x, v)$ be a nonnegative and integrable function on $\mathbb{R}^{2d}$. 
\begin{enumerate}
	\item
	$h$ belongs to a class of exponentially decaying distributions in phase space, if there exist $\alpha > 0$ and $\delta\ge1$ such that 
	\[
	\int_{\mathbb{R}^{2d}} e^{\alpha(|x|+|v|^{\delta})} h(z) \di z < \infty.
	\]
	\vspace{0.2cm}
	\item
	$h$ belongs to a class of polynomially decaying distributions in phase space, if there exist $D_1, D_2 > 0$ such that 
	\[
	\int_{\mathbb{R}^{2d}} (|x|^{D_1}+ |v|^{D_2}) h(z) \di z < \infty.
	\]
\end{enumerate}
\end{definition}
\begin{remark} It is easy to see that if $h$ belongs to  a class of exponentially decaying distributions, then it also belongs to a class of polynomially decaying distributions.
\end{remark}

\begin{theorem}\label{T2.2}
\emph{(Weak flocking)} 
Let $f$ be a global weak solution to \eqref{A-1} with the initial datum $f^{\mathrm{in}}$. Then, the following assertions hold. 
\begin{enumerate}
\item
If $\beta,~\kappa,~\gamma,~ l_1>1, ~D_1,~ D_2$ and $f^{\mathrm{in}}$ satisfy 
{\begin{align}
\begin{aligned} \label{NN-1}
& 0 \leq \beta < 1, \quad \kappa> 0, \quad  \gamma > 1, \quad  \gamma\beta<1,\quad l_1>1, \quad   D_1 > \frac{2l_1}{(l_1-1)\left(\gamma -1\right)},    \\
&  D_2\ge \max\left\{D_1,~2l_1 \right\},\quad {\mathcal M}_p(f^{\mathrm{in}}, D_1, D_2) :=  \int_{\mathbb{R}^{2d}} (|x|^{D_1}+ |v|^{D_2}) f^{\mathrm{in}}(z) \di z < \infty, 
\end{aligned}
\end{align}}
then weak flocking \eqref{A-5} emerges at least algebraically fast:
\[
\int_{\mathbb{R}^{2d}} |V(t)- v^{\mathrm{in}}_c|^2f^{\mathrm{in}}(z) \di z \leq  {\mathcal O}(1)  (1+ t)^{-\big(\frac{(l_1-1)D_1}{l_1}(\gamma-1)\big)} \quad \mbox{as $t \to \infty$}. 
\]
Here, $\gamma$ is a free scale parameter of time-varying effective region (see \eqref{NewD-4-4} in Section \ref{sec:4.1}).
\vspace{0.2cm}
\item
If $\beta, ~\kappa,~\alpha$ and $f^{\mathrm{in}}$ satisfy
\begin{align} \label{NN-11}  
	0 \leq \beta < 1, \quad \kappa >0,  \quad  \alpha > 0, \quad  {\mathcal M}_e(f^{\mathrm{in}}, \alpha) :=  \int_{\mathbb{R}^{2d}} e^{\alpha \left(| x|+| v |\right)} f^{\mathrm{in}}(z) \di z  < \infty,\end{align}
then weak flocking \eqref{A-5} emerges with super-polynomial decay asymptotically; namely, the convergence is faster than any algebraic rate. Furthermore, if we additionally require that system parameters and initial datum satisfy
{\begin{align}\label{NN-111}
		 \delta>1, \quad \beta\in\Bigg[0,\frac{\delta-1}{\delta}\Bigg), \quad \kappa > 0, \quad \alpha > 0, \quad  {\mathcal M}_e(f^{\mathrm{in}}, \alpha, \delta) := \int_{\mathbb{R}^{2d}} e^{\alpha \left(| x|+| v |^{\delta}\right)} f^{\mathrm{in}}(z) \di z < \infty,\end{align}}
then weak flocking \eqref{A-5} emerges at least exponentially fast:
{\[
\int_{\mathbb{R}^{2d}} |V(t)- v^{\mathrm{in}}_c|^2f^{\mathrm{in}}(z) \di z \leq  {\mathcal O}(1) e^{-|\mathcal{O}(1)|  (1+t )^{1-\beta\frac{\delta}{\delta-1}}} \quad \mbox{as $t \to \infty$}. 
\]}

\vspace{0.1cm}
\item
If $\beta,~\kappa$ and $f^{\mathrm{in}}$ satisfy
\begin{align}\label{NN-1111}
	\beta = 1, \quad {\mathcal M}_e(f^{\mathrm{in}}, \alpha) :=  \int_{\mathbb{R}^{2d}} e^{\alpha | x|} f^{\mathrm{in}}(z) \di z  < \infty,\quad \sup\limits_{(x,v)\in{\rm spt}(f^{\mathrm{in}})} |v|\le P_\infty,  \quad \frac{ \kappa}{\left(\frac{1}{\alpha}+1\right)P_\infty}>2,\end{align}
then weak flocking \eqref{A-5} still emerges at least algebraically fast:
{\[
	\int_{\mathbb{R}^{2d}} |V(t)- v^{\mathrm{in}}_c|^2f^{\mathrm{in}}(z) \di z \leq  {\mathcal O}(1) (1+t)^{-\frac{\kappa}{\left(\frac{1}{\alpha}+1\right)P_\infty}} \quad \mbox{as $t \to \infty$}. 
	\]}
\end{enumerate}
\end{theorem}
\begin{proof} 
Since the proofs for the above assertions are very lengthy, we leave their proofs in Section \ref{sec:4}. However, for readers' convenience, we briefly outline a proof strategy for the first assertion in several steps. \newline

Suppose that system parameters and initial datum satisfy \eqref{NN-1}. 
\begin{itemize}
\item
Step A:~we show  that the growth of $D_{1}$-th spatial moment is at most $\mathcal{O}\left((1+t)^{D_{1}}\right).$ See Lemma \ref{L3.1}.
\vspace{0.1cm}
\item
Step B:~we show that the mass over the monotonically decreasing set $ \Omega(t) := (B_{R_x(t)})^c \times \bbr^d$ with $R_x(t):=  C_1(1 + t)^{\gamma}$, 
has an algebraic decay with $\mathcal{O}(1) (1+t)^{-D_1(\gamma-1)}.$  See Corollary \ref{C3.1}.
\vspace{0.1cm}
\item
Step C:~we use the relation (iv) in Lemma \ref{L2.2}
to derive the Grönwall type differential inequality:
\begin{align*}
\begin{aligned}
& \dfrac{\di}{\di t} \int_{\mathbb{R}^{2d}} |V(t)- v^{\mathrm{in}}_c|^2f^{\mathrm{in}}(z) \di z  \\
& \hspace{0.5cm} \leq - \kappa|\mathcal{O}(1)| (1+t)^{-\beta\gamma} \int_{\mathbb{R}^{2d}} |V(t)- v^{\mathrm{in}}_c|^2f^{\mathrm{in}}(z) \di z +\mathcal{O}(1) \kappa (1+t )^{-\big(\frac{(l_1-1)D_1}{l_1}(\gamma-1)+ \beta\gamma\big)}.
		\end{aligned}
	\end{align*}
See Section \ref{sec:4.1} for details.	
\vspace{0.2cm}
\item
Step D:~we use a variant of the Grönwall type lemma to derive weak velocity alignment:
\[  \int_{\mathbb{R}^{2d}} |V(t)- v^{\mathrm{in}}_c|^2f^{\mathrm{in}}(z) \di z \lesssim  {\mathcal O}(1)  (1+ t)^{-\big(\frac{(l_1-1)D_1}{l_1}(\gamma-1)\big)}.     \]
\vspace{0.1cm}
\item
Step E:~ we use \eqref{NN-1} and the relation
\[ 	\quad\dfrac{\di}{\di t}\left(\int_{\mathbb{R}^{2d}} |X(t)-v^{\mathrm{in}}_ct- x^{\mathrm{in}}_c|^2f^{\mathrm{in}}(z) \di z \right)^{\frac{1}{2}}\le\Big(\int_{\mathbb{R}^{2d}} |V(t)- v^{\mathrm{in}}_c|^2f^{\mathrm{in}}(z) \di z \Big)^{\frac{1}{2}}  \]
to derive the weak spatial cohesion:
\[
\int_{\mathbb{R}^{2d}} |X(t)-v^{\mathrm{in}}_ct- x^{\mathrm{in}}_c|^2f^{\mathrm{in}}(z) \di z \leq {\mathcal O}(1).
\]
\end{itemize}
\end{proof}
\begin{remark}
We discuss several comments on the framework and the results of Theorem \ref{T2.1} and Theorem \ref{T2.2} as follows.
\begin{enumerate}
\item
Many probability distributions belong to an exponentially decaying distribution class, including well-known distributions such as Gaussian and sub-Gaussian distributions. Conversely, to fulfill the conditions for a polynomially decaying distribution, the high-order moment must be finite.
\vspace{0.1cm}
\item
The results in Theorem \ref{T2.1} and Theorem \ref{T2.2} demonstrate that as long as initial velocity distribution is sufficiently concentrated, the solution will be unique in the weak sense of definition \ref{D2.1}, and a certain degree of concentration will persist under the evolution of the KCS flow.
\vspace{0.1cm}
\item
Compared to the previous work \cite{w6}, our results extend their findings by relaxing the assumption of compact velocity support to allow for non-compact velocity support. In particular, we do not impose any condition on coupling strength $\kappa$ and improve the critical exponent. More precisely, we can impose a stronger assumption than \eqref{NN-1} by requiring that $D=\min\{D_1,D_2\}$ satisfies
\[D > \inf\limits_{l_1>1}\left(\max \Big \{\frac{2l_1}{(l_1-1)(\gamma -1)}, ~2l_1 \Big \}\right).\]
{Then, if we take $\gamma=\frac{1-\varepsilon}{\beta} ~(0<\varepsilon<1-\beta)$, we only need to require}
 \[{D>\inf\limits_{l_1>1}\left(\max\left\{\frac{2l_1}{(l_1-1)}\frac{\beta}{1-\varepsilon-\beta},\quad 2l_1\right\}\right).}\] This significantly improves the exponent $D>\frac{7}{1-\beta}$ in \cite{w6}, even though we deal with a fully non-compact setting.

\vspace{0.1cm}
\item
If we further require the velocity support of the solution to be bounded as in \cite{w6},  we can also provide the weak flocking behavior of KCS model when $\beta= 1$ for an exponentially decaying initial distribution. However, for a polynomial decay initial distribution, we can only deal with $\beta< 1,$ even if we assume the velocity support of the solution is bounded.
\vspace{0.1cm}
\item Our conclusions about the weak flocking behavior do not rely on the argument of characteristics flow. We can also obtain it by directly studying the solutions of the PDEs without characteristics flow. Moreover, we can also use a similar argument in our paper to deal with a large number of CS-type models with a Brownian component.  More precisely, if the diffusion coefficient satisfies an appropriate time-decaying condition, we can obtain the weak flocking behavior of the Cucker–Smale–Fokker–Planck type equation proposed in \cite{ks1} without requiring the kernel function to have a positive lower bound. 
\vspace{0.1cm}
\item 
{The decay estimates in Theorem~\ref{T2.2} reflect the tail behavior of initial data: exponential decay of initial data yields an exponential weak flocking estimate, whereas polynomial decay yields a polynomial one. However, we do not claim that these rates are optimal. Our proof is based on estimating the maximal growth of the spatial moment, and such an argument is generally not expected to provide a sharp convergence rate. Determining the optimal weak flocking rate for given initial data and system parameters remains an interesting open problem.}
\end{enumerate}
\end{remark}
\vspace{0.2cm}
In the following two sections, we provide proofs for Theorem \ref{T2.1} and Theorem \ref{T2.2}.

\section{Uniqueness of weak solution}\label{sec:3}
\setcounter{equation}{0}
In this section, we present the uniqueness of weak solutions to \eqref{A-1} using spatial and velocity moments estimates along the particle trajectories. 
\subsection{Propagation of spatial and velocity moments}\label{sec:3.1}
In this subsection, we study the propagation of spatial and velocity moments, and derive estimates on the mass of particles with sufficiently large velocities. 
\begin{lemma}\label{L3.1}
For {$D\ge 2$} and $\tau \in (0, \infty]$, let $f \in {\mathcal C}([0, \tau); L^1(\mathbb{R}^{2d})) \cap L_+^\infty([0, \tau) \times \mathbb{R}^{2d})$ be a weak solution to \eqref{A-1} such that 
\[
{\mathcal M}_p(f^{\mathrm{in}}, D) :=  \int_{\mathbb{R}^{2d}} (|x|^D+ |v|^D) f^{\mathrm{in}}(z) \di z < \infty.
\]
Then, we have the following estimates: for $t \in [0, \tau)$, 
	\begin{align*}
	\begin{aligned} \label{C-1-101}
& (i)~\int_{\mathbb{R}^{2d}}v f(t, z) \di z =	\int_{\mathbb{R}^{2d}}vf^{\mathrm{in}}(z) \di z. \\
& (ii)~ \int_{\mathbb{R}^{2d}} xf(t, z) \di z  =	\int_{\mathbb{R}^{2d}}x f^{\mathrm{in}}(z) \di z  + t \int_{\mathbb{R}^{2d}}vf^{\mathrm{in}}(z) \di z. \\
& (iii)~ \int_{\mathbb{R}^{2d}} | v|^{D}f(t, z) \di z \le	\int_{\mathbb{R}^{2d}} | v|^{D}f^{\mathrm{in}}(z) \di z. \\
& (iv)~\int_{\mathbb{R}^{2d}} | x|^{D}f(t, z) \di z   \le	\left[ \left(\int_{\mathbb{R}^{2d}} | x|^{D}f^{\mathrm{in}}(z) \di z \right)^{\frac{1}{D}}+ 
\left(\int_{\mathbb{R}^{2d}} | v|^{D}f^{\mathrm{in}}(z) \di z \right)^{\frac{1}{D}}t\right ]^{D}.
	\end{aligned}
	\end{align*}

\end{lemma}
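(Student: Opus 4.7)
\textbf{Proof plan for Lemma \ref{L3.1}.} The plan is to obtain (i)--(ii) by testing the weak formulation against truncated polynomial test functions, obtain (iii) by a symmetrized monotonicity argument, and deduce (iv) from the trajectory representation together with Minkowski's inequality.

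First I would prove (i) and (ii) by choosing a smooth compactly supported cutoff $\chi_R \in \mathcal{C}_c^1(\bbr^{2d})$ with $\chi_R \equiv 1$ on the ball $B_R \subset \bbr^{2d}$, and testing the weak equation against $\zeta(t,x,v) = v_i\,\chi_R(x,v)$ and $\zeta(t,x,v) = x_i\,\chi_R(x,v)$, respectively. For (i), the transport and force terms expand as double integrals against $f(t,z)f(t,z_\star)$; swapping the dummy variables $z \leftrightarrow z_\star$ in the $L[f]$ contribution and averaging yields an antisymmetric kernel in $(v-v_\star)$, so that piece vanishes, and the only surviving piece is the time derivative. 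Passing $R\to\infty$ is legitimate thanks to the hypothesis ${\mathcal M}_p(f^{\mathrm{in}},D)<\infty$ with $D\ge 2$, which controls the truncation error. For (ii), the $\nabla_v$ term disappears because $x_i$ has no $v$-dependence, the $\nabla_x$ term contributes $\int v_i f(t)\,\di z$, and integrating in time and invoking (i) gives the linear-in-$t$ formula.

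Next, for (iii), I would test the weak equation against $|v|^D\chi_R$ (with $D\ge 2$, which is within the regime we care about), obtaining after a limit $R\to\infty$ the identity
\begin{equation*}
\frac{\di}{\di t}\int_{\bbr^{2d}}|v|^D f(t,z)\,\di z = -D\kappa\int_{\bbr^{4d}}\phi(|x-x_\star|)\,|v|^{D-2}v\cdot(v-v_\star)\,f(t,z)f(t,z_\star)\,\di z\,\di z_\star.
\end{equation*}
Symmetrizing in $z\leftrightarrow z_\star$ and halving gives the kernel $(|v|^{D-2}v-|v_\star|^{D-2}v_\star)\cdot(v-v_\star)$, which is nonnegative for $D\ge 2$ because $v\mapsto |v|^{D-2}v$ is the gradient of the convex function $|v|^D/D$ and is therefore monotone. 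Hence the right-hand side is nonpositive, and integrating in time yields (iii).

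For (iv), I would first extend Lemma \ref{L2.2}(iii) to the spatial moment (same push-forward argument with $\psi(z)=|x|^D$ approximated by bounded cutoffs), giving $\int|x|^D f(t,z)\,\di z = \int|X(t)|^D f^{\mathrm{in}}(z)\,\di z$. Since $\dot X = V$ along trajectories, one has $|X(t)|\le |x|+\int_0^t|V(s)|\,\di s$. Applying Minkowski's inequality in $L^D(\bbr^{2d}, f^{\mathrm{in}}\di z)$ followed by the continuous (integral) Minkowski inequality in $s$, and then using (iii) in the form $\int|V(s)|^D f^{\mathrm{in}}(z)\,\di z=\int|v|^D f(s,z)\,\di z\le\int|v|^D f^{\mathrm{in}}(z)\,\di z$, produces exactly the claimed bound.

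The main technical obstacle will be the rigorous passage $R\to\infty$ in the truncated weak identities for test functions of polynomial growth, since a priori we only know that the moments are finite initially. I would circumvent this with a standard bootstrap: introduce a further truncation $|v|^D\wedge M$ and $|x|^D\wedge M$, perform the monotonicity/Minkowski arguments at that level (where all quantities are bounded and all manipulations on the weak formulation are justified), and then take $M\to\infty$ by monotone convergence, which is compatible with the one-sided (upper bound) nature of (iii) and (iv).
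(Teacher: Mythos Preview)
Your plan for (i)--(iii) matches the paper's argument essentially line for line: test the weak formulation against $v$, $x$, $|v|^D$ with a smooth cutoff, use the $z\leftrightarrow z_\star$ symmetrization, and pass to the limit. Your monotonicity justification for (iii) via ``$v\mapsto|v|^{D-2}v$ is the gradient of a convex function'' is a clean restatement of the paper's explicit calculation, which expands $\langle |v|^{D-2}v-|v_\star|^{D-2}v_\star,\,v-v_\star\rangle$ and factors it as $(|v|^{D-1}-|v_\star|^{D-1})(|v|-|v_\star|)\ge0$.

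For (iv) you take a genuinely different route. The paper stays Eulerian: it tests the weak formulation against $|x|^D$, obtains
\[
\frac{\di}{\di t}\int_{\bbr^{2d}}|x|^D f\,\di z = D\int_{\bbr^{2d}}|x|^{D-2}\langle x,v\rangle f\,\di z \le D\Big(\int|x|^D f\Big)^{\frac{D-1}{D}}\Big(\int|v|^D f\Big)^{\frac{1}{D}}
\]
by H\"older, rewrites this as $\frac{\di}{\di t}\big(\int|x|^D f\big)^{1/D}\le\big(\int|v|^D f^{\mathrm{in}}\big)^{1/D}$ using (iii), and integrates. Your Lagrangian argument (push-forward $\int|x|^D f(t)=\int|X(t)|^D f^{\mathrm{in}}$, pointwise bound $|X(t)|\le|x|+\int_0^t|V(s)|\,\di s$, then two applications of Minkowski) is equally valid and arguably more transparent, since the inequality is visible at the trajectory level before integrating. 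The trade-off is that your route leans on the characteristic representation \eqref{B-1-2} and its extension to unbounded $\psi$, which you correctly flag as the main technical point and handle by the $M$-truncation bootstrap; the paper's route avoids invoking characteristics altogether and stays within the weak formulation, at the cost of the slightly less intuitive H\"older-then-Gr\"onwall manipulation.
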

\begin{proof}
(i)~It follows from the weak formulation in Definition \ref{D2.1} that for any $\psi \in {\mathcal C}_c^1(\mathbb{R}^{2d})$, 
	\begin{equation}\label{424}
		\dfrac{\di}{\di t}\int_{\mathbb{R}^{2d}}\psi f(t, z) \di z =\int_{\mathbb{R}^{2d}} \Big( v \cdot\nabla_{ x}\psi +\nabla_{ v}\psi \cdot L[f] \Big) f(t, z) \di z.
	\end{equation}
	Since the map $ v~\mapsto~| v|^D$ is not an admissible test
	function in \eqref{424}, we introduce a smooth cut-off function \(\varphi_R\) such that  
	\[
	\varphi_R( v) = 
	\begin{cases} 
		1, & \mbox{for }~ v\in B_R, \\ 
		0, & \mbox{for }~ v\in \mathbb{R}^d \setminus B_{2R},
	\end{cases} 
	\quad \mbox{and} \quad  |\nabla \varphi_R| \leq \frac{2}{R},
	\]
	where $B_r = B_r(0)$ is the ball with a radius $r$ centered at the origin. Then, we can use the definition of a weak solution with the function to find 
		\begin{align}
		\begin{aligned} \label{C-0-110}
		 \dfrac{\di}{\di s}\int_{\mathbb{R}^{2d}}v f(s, z) \di z =-\kappa \int_{\mathbb{R}^{4d}}\phi( |x_{\star}-x|)(v-v_{\star})f(s, z_{\star})f(s, z) \di z \di z_\star =0.
	\end{aligned}
	\end{align}
	We integrate \eqref{C-0-110} from $s= 0$ to $s= t$ to find the desired total momentum.
	
	\vspace{0.2cm}
	
\noindent (ii)~We use a similar argument as in (i) and the result of (i) to get 
\begin{equation}\label{New-0}
			\dfrac{\di}{\di s}\int_{\mathbb{R}^{2d}} x f(s, z) \di z = \int_{\mathbb{R}^{2d}} v  f(s, z) \di z =  \int_{\mathbb{R}^{2d}} v  f^{\mathrm{in}}(z) \di z.
	\end{equation}	
Again, we integrate \eqref{New-0} to derive the desired estimate. 

\vspace{0.2cm}	
	
\noindent (iii)~Recall that 
	\begin{equation} \label{New-1}
	 \partial_t f = -\nabla_x \cdot (v f) - \nabla_v \cdot (L[f] f). 
	\end{equation}
	
\noindent We use the exchange map $(x, v)~~\Longleftrightarrow~~(x_\star, v_{\star})$, the symmetry of $\phi(\cdot)$ under the exchange map,  \eqref{New-1} and 
\[ \nabla_v ( | v |^D)  = D | v |^{D-2} v \]
 to find 
		\begin{align}\label{C-0-10}
		\begin{aligned}
			&\dfrac{\di}{\di t}\int_{\mathbb{R}^{2d}} | v |^D f(t, z) \di z = \int_{\mathbb{R}^{2d}} | v |^D \partial_t f(t, z) \di z \\
			&=-\kappa D\int_{\mathbb{R}^{4d}}\phi(|x_{\star}-x|) | v |^{D-2}\langle v,v-v_{\star}\rangle f(t, z_{\star})f(t, z) \di z\di z_\star \\	
			&=-\frac{\kappa D}{2}\int_{\mathbb{R}^{4d}}\phi(|x_{\star}-x|) \langle | v|^{D-2} v-| v_{\star}|^{D-2} v_{\star},v-v_{\star}\rangle f(t, z_\star)   f(t,z) \di z_{\star}  \di z  \\
					&\le-\frac{\kappa D}{2}\int_{\mathbb{R}^{4d}}\phi(|x_{\star}-x|)\left(| v|^{D}+ | v_{\star} |^{D}-| v_{\star} |^{D-1} | v|-| v|^{D-1} | v_{\star}|\right)f(t, z_{\star})f(t, z) \di z_\star  \di z  \\		
			&=-\frac{\kappa D}{2}\int_{\mathbb{R}^{4d}}\phi(|x_{\star}-x|)(| v|^{D-1}-| v_{\star}|^{D-1})\left(| v|-| v_{\star}|\right)f(t, z_{\star})f(t, z) \di z_\star  \di z \\
			&\le 0,
		\end{aligned}
	\end{align}
	where $\langle \cdot, \cdot \rangle$ is the standard inner product in $\bbr^d$. This yields the desired third estimate.
	
	\vspace{0.2cm}
	
\noindent (iv)~We can use the same technique as in (i) and the H\"{o}lder inequality to get 
		\begin{align}\label{C-0-11}
		\begin{aligned}
		& \dfrac{\di}{\di t}\int_{\mathbb{R}^{2d}} | x|^D f(t, z)  \di z \\
		& \hspace{0.5cm} =D\int_{\mathbb{R}^{2d}} | x|^{D-2}\langle x,v \rangle f(t, z) \di z \le D\int_{\mathbb{R}^{2d}} | x|^{D-1} |v |f(t, z) \di z \\
		& \hspace{0.5cm} \le D\left(\int_{\mathbb{R}^{2d}} | x|^{D}f(t, z) \di z \right)^{\frac{D-1}{D}}\left(\int_{\mathbb{R}^{2d}} |v|^{D}f(t, z) \di z \right)^{\frac{1}{D}}.
		\end{aligned}
	\end{align}
	This implies 
		\begin{equation}\label{C-0-111}
		\dfrac{\di}{\di t}\left(\int_{\mathbb{R}^{2d}} | x|^{D}f(t, z) \di z \right)^{\frac{1}{D}}
			\le \left(\int_{\mathbb{R}^{2d}} |v|^{D}f(t, z) \di z \right)^{\frac{1}{D}} 
			\le \left(\int_{\mathbb{R}^{2d}} | v|^{D}f^{\mathrm{in}}(z) \di z \right)^{\frac{1}{D}}.
	\end{equation}
	Now, we integrate the above relation from $s= 0$ to $s= t$ to obtain
		\begin{align*}
	\int_{\mathbb{R}^{2d} } | x|^{D}f(t, z) \di z \le	\left[  \left(\int_{\mathbb{R}^{2d}} | x|^{D}f^{\mathrm{in}}(z) \di z \right)^{\frac{1}{D}}+\left(\int_{\mathbb{R}^{2d}} | v|^{D}f^{\mathrm{in}}(z) \di z \right)^{\frac{1}{D}}t\right]^{D }.
	\end{align*}

\end{proof}
\begin{remark} Below, we comment on the result of Lemma \ref{L3.1}.
\begin{enumerate}
\item
If we assume 
\[ x_c(0) =0, \quad v_c(0) =0, \]
then we have
\[
 x_c(t) =0, \quad v_c(t) =0, \quad t \geq 0.
\]
\item
It is easy to see that 
\[ 
\int_{\mathbb{R}^{2d}} | v|^{D}f(t, z) \di z \lesssim 1, \quad \int_{\mathbb{R}^{2d}} |x|^{D}f(t, z) \di z \lesssim (1 + t)^D.
\]
\item {Due to conclusion (iv) in Lemma \ref{L3.1}, we need to assume that \(D_2 \ge D_1\) in \eqref{NN-1}.}
\end{enumerate}
\end{remark}

As a direct application of Lemma \ref{L3.1}, we have estimates on spatial and velocity moments for large position and velocity. 
\begin{corollary} \label{C3.1}
Let $f$ be a weak solution to \eqref{A-1} with the initial datum $f^{\mathrm{in}}$ such that
\[
{\mathcal M}_p(f^{\mathrm{in}}, D) = \int_{\mathbb{R}^{2d}}( |x|^D+ |v|^D) f^{\mathrm{in}}(z) \di z < \infty,\quad D\ge2,
\]
and let $R_x(t)$ and $R_v(t)$ be nonnegative locally bounded functions. Then, we have
\begin{align}
\begin{aligned} \label{New-2}
& (i)~\int_{ \bbr_x^d \times B^c_{R_v(t)}} f(t, z) \di z \le\frac{\mathcal{M}_{p}(f^{\mathrm{in}},D)}{R_v(t)^D}. \\
& (ii)~\int_{B^c_{R_x(t)}  \times \bbr_v^d}   f(t, z) \di z \le \frac{2^{D-1} \mathcal{M}_{p}(f^{\mathrm{in}},D) (1 + t)^D}{R_x(t)^D}.
\end{aligned}
\end{align}
\end{corollary}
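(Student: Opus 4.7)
The plan is to derive both estimates by straightforward applications of the Markov (Chebyshev) inequality to the moment bounds already obtained in Lemma~\ref{L3.1}. Since each integrand is over a region where either $|v|$ or $|x|$ exceeds a prescribed threshold, I can insert the appropriate power of $|v|/R_v(t)$ or $|x|/R_x(t)$ as a factor $\geq 1$ on the integration domain and then enlarge the domain back to $\mathbb{R}^{2d}$.

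For estimate (i), on the set $\mathbb{R}^d_x \times B^c_{R_v(t)}$ one has $|v|^D / R_v(t)^D \geq 1$, so
\[
\int_{\mathbb{R}^d_x \times B^c_{R_v(t)}} f(t,z)\,\di z \;\leq\; \frac{1}{R_v(t)^D}\int_{\mathbb{R}^{2d}} |v|^D f(t,z)\,\di z.
\]
Lemma~\ref{L3.1}(iii) bounds the right-hand moment by $\int_{\mathbb{R}^{2d}} |v|^D f^{\mathrm{in}}(z)\,\di z \leq \mathcal{M}_p(f^{\mathrm{in}},D)$, which yields the claim.

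For estimate (ii), the same Markov step gives
\[
\int_{B^c_{R_x(t)} \times \mathbb{R}^d_v} f(t,z)\,\di z \;\leq\; \frac{1}{R_x(t)^D}\int_{\mathbb{R}^{2d}} |x|^D f(t,z)\,\di z,
\]
and by Lemma~\ref{L3.1}(iv) the spatial $D$-th moment is controlled by $(A + Bt)^D$ with $A := (\int|x|^D f^{\mathrm{in}})^{1/D}$ and $B := (\int|v|^D f^{\mathrm{in}})^{1/D}$. The remaining task is to convert $(A+Bt)^D$ into the form $2^{D-1}\mathcal{M}_p(f^{\mathrm{in}},D)(1+t)^D$. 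I would estimate $A + Bt \leq (A+B)(1+t)$ (since $t \leq 1+t$ and $1 \leq 1+t$), then raise to the $D$-th power and apply the elementary convexity inequality $(A+B)^D \leq 2^{D-1}(A^D + B^D)$, noting $A^D + B^D = \mathcal{M}_p(f^{\mathrm{in}},D)$ by definition.

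Neither step presents a real obstacle: Lemma~\ref{L3.1} has already done the hard work of propagating the moments, and the remainder is only Markov's inequality together with a single convexity inequality. The only point requiring a brief justification is the enlargement inequality $A + Bt \leq (A+B)(1+t)$, which follows immediately from $A \leq A(1+t)$ and $Bt \leq B(1+t)$.
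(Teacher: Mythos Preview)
Your proposal is correct and follows essentially the same approach as the paper's own proof: both use Markov's inequality against the moment bounds from Lemma~\ref{L3.1}, then the convexity inequality $(a+b)^D \le 2^{D-1}(a^D+b^D)$ to obtain the factor $2^{D-1}\mathcal{M}_p(f^{\mathrm{in}},D)(1+t)^D$. The only cosmetic difference is that the paper applies the convexity inequality directly to $(A+Bt)^D \le 2^{D-1}(A^D + B^D t^D)$ and then bounds each term by $(1+t)^D$, whereas you first factor $A+Bt \le (A+B)(1+t)$ and then apply convexity to $(A+B)^D$; these are equivalent one-line manipulations.
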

\begin{proof}
Let $R_x(t)$ and $R_v(t)$ be arbitrary time-dependent functions. Then, it follows from Lemma \ref{L3.1} that 
\begin{align*}
\begin{aligned}
& R_v(t)^D \int_{ \bbr_x^d \times B^c_{R_v(t)}} f(t,z) \di z   \le \int_{\mathbb{R}^{2d}} |v|^Df(t, z) \di z = \mathcal{M}_{p}(f^{\mathrm{in}}, D),\\
& R_x(t)^D   \int_{B^c_{R_x(t)}  \times \bbr_v^d}   f(t, z) \di z \leq     \int_{B^c_{R_x(t)}  \times \bbr_v^d}  |x|^D  f(t, z) \di z  \\
& \hspace{0.5cm}  \leq \int_{\bbr^{2d}} |x|^D f(t, z) \di z  \le \left[  \left(\int_{\mathbb{R}^{2d}} | x|^{D}f^{\mathrm{in}}(z) \di z \right)^{\frac{1}{D}}+ \left(\int_{\mathbb{R}^{2d}} | v|^{D}f^{\mathrm{in}}(z) \di z \right)^{\frac{1}{D}}t \right]^{D}  \\
& \hspace{0.5cm}  \le  2^{D-1} \Big[ \left(\int_{\mathbb{R}^{2d}} | x|^{D}f^{\mathrm{in}}(z) \di z \right) + \left(\int_{\mathbb{R}^{2d}} | v|^{D}f^{\mathrm{in}}(z) \di z \right) t^D \Big ] \\
& \hspace{0.5cm} \leq 2^{D-1} {\mathcal M}_p(f^{\mathrm{in}}, D) (1 + t)^D,
\end{aligned}
\end{align*}
where we used the inequality:
\[ (|a| + |b|)^{D} \leq 2^{D-1} (|a|^D + |b|^D),~~D \geq 1. \]
The above estimates yield the desired estimates. 
\end{proof}
\begin{remark} 
	If we choose a super-linearly growing function $R_x(t)$ such that 
\[ R_x(t) \geq C (1 + t)^{1+ \varepsilon} \quad \mbox{for some $\varepsilon > 0$}, \]
then the masses over the regions $\bbr_x^d \times B^c_{R_v(t)}$ and  $B^c_{R_x(t)}  \times \bbr_v^d$ in phase space:
\[ \int_{ \bbr_x^d \times B^c_{R_v(t)}} f(t, z) \di z  \quad \mbox{and} \quad \int_{B^c_{R_x(t)}  \times \bbr_v^d}   f(t, z) \di z  \]	
 decay to zero as $t \to \infty$. 
\end{remark}

Note that in Lemma \ref{L3.1}, we deal with the propagation of spatial-velocity moments along the kinetic flow. Next, we study estimates on the integrals of $f$ with exponential spatial and velocity weights.
\begin{lemma}\label{L3.2}
	For $\tau \in (0, \infty]$, let $f \in {\mathcal C}([0, \tau); L^1(\mathbb{R}^{2d})) \cap L_+^\infty([0, \tau) \times \mathbb{R}^{2d})$ be a weak solution to \eqref{A-1} such that 
	\[
	{\mathcal M}_{e}(f^{\mathrm{in}}, \alpha, \delta) :=  \int_{\mathbb{R}^{2d}} e^{\alpha(|x|+ |v|^{\delta})} f^{\mathrm{in}}(z) \di z < \infty,\quad \alpha>0, \quad \delta\ge1.
	\]
	Then, the following assertions hold.
\begin{enumerate}
\item
Propagation of exponentially weighted mass:	
\begin{equation} \label{NN-3}
\begin{cases}
\displaystyle \int_{\mathbb{R}^{2d}}e^{\alpha | v|}f(t, z) \di z \le	\int_{\mathbb{R}^{2d}} e^{\alpha | v|}f^{\mathrm{in}}(z) \di z, \vspace{6pt}\\
\displaystyle \int_{\mathbb{R}^{2d}}e^{\alpha | v|^{\delta}}f(t, z) \di z \le	\int_{\mathbb{R}^{2d}} e^{\alpha | v|^{\delta}}f^{\mathrm{in}}(z) \di z, \vspace{6pt}\\
\displaystyle \frac{\di }{\di t}\int_{\mathbb{R}^{2d}} e^{\alpha | x|}f(t, z) \di z   \leq	\alpha \int_{\mathbb{R}^{2d}} e^{\alpha | x|} |v| f(t, z) \di z.\vspace{6pt}\\
\end{cases}
\end{equation}
\item
If the projected velocity support of $f$ is uniformly bounded:
	\begin{equation}\label{eqt4.401}
		\sup\limits_{(x,v)\in{\rm spt}(f(t))} |v|\le P_\infty,
	\end{equation} 
then we have
\[
		\int_{\mathbb{R}^{2d}} e^{\alpha | x|}f(t, z) \di z\le e^{\alpha P_\infty t}\int_{\mathbb{R}^{2d}} e^{\alpha | x|}f^{\mathrm{in}}( z) \di z. \]
\end{enumerate}		
\end{lemma}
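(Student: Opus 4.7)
The plan is to mimic the moment-propagation argument of Lemma \ref{L3.1}, but with the polynomial weights $|v|^D$ and $|x|^D$ replaced by exponential weights $e^{\alpha|v|}$ and $e^{\alpha|x|}$. Since $e^{\alpha|\cdot|}$ is neither compactly supported nor smooth at the origin, each computation will be carried out first against a smooth cut-off approximant $\chi_R(z)\Phi_\varepsilon(|v|)$ (resp.\ $\chi_R(z)\Phi_\varepsilon(|x|)$), where $\chi_R$ is the cut-off used in Lemma \ref{L3.1} and $\Phi_\varepsilon(r) = e^{\alpha\sqrt{r^2+\varepsilon^2}}$ mollifies the corner at the origin; the bounds \eqref{NN-3} then follow by sending $R\to\infty$ and $\varepsilon\to 0^+$ using the finiteness of ${\mathcal M}_e(f^{\mathrm{in}},\alpha)$ together with the conservation/propagation results of Section~\ref{sec:3.1}.

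For the first inequality in \eqref{NN-3}, I test the weak formulation against $e^{\alpha|v|}$, for which $\nabla_v e^{\alpha|v|}=\alpha e^{\alpha|v|}\tfrac{v}{|v|}$. This gives
\begin{equation*}
\frac{\di}{\di t}\int_{\bbr^{2d}} e^{\alpha|v|} f\,\di z \;=\; -\kappa\alpha \int_{\bbr^{4d}}\phi(|x-x_\star|)\,e^{\alpha|v|}\,\tfrac{v}{|v|}\cdot(v-v_\star)\,f(z)f(z_\star)\,\di z\,\di z_\star .
\end{equation*}
Symmetrizing in $(z,z_\star)$ as in \eqref{C-0-10} rewrites the right-hand side as $-\tfrac{\kappa\alpha}{2}\iint \phi(\cdot)\,\mathcal{I}(v,v_\star)\,f f_\star\,\di z_\star\di z$, with
\begin{equation*}
\mathcal{I}(v,v_\star) \;=\; e^{\alpha|v|}\Bigl(|v|-\tfrac{v\cdot v_\star}{|v|}\Bigr) + e^{\alpha|v_\star|}\Bigl(|v_\star|-\tfrac{v\cdot v_\star}{|v_\star|}\Bigr).
\end{equation*}
Cauchy--Schwarz gives $\tfrac{v\cdot v_\star}{|v|}\le |v_\star|$ and $\tfrac{v\cdot v_\star}{|v_\star|}\le|v|$, so $\mathcal{I}(v,v_\star) \ge (|v|-|v_\star|)(e^{\alpha|v|}-e^{\alpha|v_\star|})\ge 0$ by monotonicity of $r\mapsto e^{\alpha r}$. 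Hence the derivative is non-positive, yielding the claim after passing to the limit in the approximants.

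For the second inequality in \eqref{NN-3}, I test against $e^{\alpha|x|}$, using $\nabla_x e^{\alpha|x|}=\alpha e^{\alpha|x|}\tfrac{x}{|x|}$ and the fact that the velocity-alignment term drops out since $\nabla_v e^{\alpha|x|}=0$. This gives
\begin{equation*}
\frac{\di}{\di t}\int_{\bbr^{2d}} e^{\alpha|x|} f\,\di z \;=\; \alpha\int_{\bbr^{2d}} e^{\alpha|x|}\,\tfrac{x\cdot v}{|x|}\,f\,\di z \;\le\; \alpha\int_{\bbr^{2d}} e^{\alpha|x|}\,|v|\,f\,\di z,
\end{equation*}
which is exactly the stated differential inequality. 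Under the additional assumption \eqref{eqt4.401}, the integrand is supported where $|v|\le P_\infty$, so the right-hand side is bounded by $\alpha P_\infty \int e^{\alpha|x|} f\,\di z$; Grönwall's inequality then yields the final bound $\int e^{\alpha|x|}f(t)\,\di z \le e^{\alpha P_\infty t}\int e^{\alpha|x|}f^{\mathrm{in}}\,\di z$.

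The main obstacle is purely technical: justifying that $e^{\alpha|v|}$ and $e^{\alpha|x|}$ can be used as test functions in Definition \ref{D2.1}. The cut-off-and-mollify procedure works because ${\mathcal M}_e(f^{\mathrm{in}},\alpha)<\infty$ provides the dominating function needed for dominated convergence in the $R\to\infty$, $\varepsilon\to 0^+$ limits, and Lemma~\ref{L3.1}(iv) controls the spatial mass uniformly on $[0,\tau)$. The symmetrization/monotonicity identity above is the genuine structural content of the argument, and the Cauchy--Schwarz step ensuring $\mathcal{I}(v,v_\star)\ge 0$ is the point at which the exponential weight must be handled more carefully than the polynomial weight $|v|^D$ in \eqref{C-0-10}.
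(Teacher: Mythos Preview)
Your proposal is correct and follows essentially the same route the paper has in mind: the paper's own proof of part (i) is simply ``The derivations for the estimates in \eqref{NN-3} are similar to Lemma \ref{L3.1}, hence we omit them here,'' and your cut-off plus symmetrization argument is precisely the exponential-weight analogue of \eqref{C-0-10}--\eqref{C-0-111}, while your treatment of part (ii) via Gr\"onwall matches the paper verbatim. The monotonicity estimate $\mathcal{I}(v,v_\star)\ge(|v|-|v_\star|)(e^{\alpha|v|}-e^{\alpha|v_\star|})\ge 0$ is the right replacement for the factor $(|v|^{D-1}-|v_\star|^{D-1})(|v|-|v_\star|)$ in \eqref{C-0-10}, so you have filled in exactly the details the authors omitted.
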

\begin{proof}
(1) The derivations for the first and third estimates in \eqref{NN-3} are similar to Lemma \ref{L3.1},  hence we omit them here. We only provide proof of the second estimate here. For $\Psi(v):=e^{\alpha |v|^\delta}$ with $\alpha>0$ and $\delta\ge 1$, we formally compute
\begin{align*}
	\frac{\di }{\di t}\int_{\mathbb R^{2d}} \Psi(v) f(t,z) \di z
	&= \int_{\mathbb R^{2d}} \Psi(v)\,\partial_t f(t,z) \di z \\
	&= \int_{\mathbb R^{2d}} \nabla_v \Psi(v)\cdot L[f](t,x,v)\, f(t,z) \di z \\
	&= -\kappa \int_{\mathbb R^{4d}} \phi(|x-x_{\star}|)\,\nabla\Psi(v)\cdot (v-v_{\star})\, f(t,z)f(t,z_{\star})\,\di z\di z_{\star} \\
	&= -\frac{\kappa}{2}\int_{\mathbb R^{4d}} \phi(|x-x_{\star}|)
	\big(\nabla\Psi(v)-\nabla\Psi(v_{\star})\big)\cdot (v-v_{\star})\, f(t,z)f(t,z_{\star})\,\di z\di z_{\star}.
\end{align*}
Since $\Psi(v)=e^{\alpha |v|^\delta}$ is convex for $\delta\ge 1$, its gradient is monotone, i.e.,
\[
\big(\nabla\Psi(v)-\nabla\Psi(v_{\star})\big)\cdot (v-v_{\star})\ge 0.
\]
Therefore, we have
\[
\frac{\di }{\di t}\int_{\mathbb R^{2d}} e^{\alpha |v|^\delta} f(t,z)\di z \le 0,
\]
which yields
\[
\int_{\mathbb R^{2d}} e^{\alpha |v|^\delta}f(t,z) \di z
\le
\int_{\mathbb R^{2d}} e^{\alpha |v|^\delta}f^{\mathrm{in}}(z) \di z.
\]\newline

\noindent (2) We use the third estimate in \eqref{NN-3} to see that 
\[
\frac{\di }{\di t}\int_{\mathbb{R}^{2d}} e^{\alpha | x|}f(t, z) \di z   \leq	\alpha \int_{\mathbb{R}^{2d}} e^{\alpha | x|} |v| f(t, z) \di z \leq \alpha P_{\infty} \int_{\mathbb{R}^{2d}} e^{\alpha | x|} f(t, z) \di z.
\]
This yields the desired estimate.
\end{proof}

As a direct application of Lemma \ref{L3.2}, we can estimate the mass outside of the effective region.
\begin{corollary}\label{C3.2}
	Let $R_x(t)$ and $R_v(t)$ be nonnegative locally bounded functions. 
	\begin{enumerate}
		\item Suppose that the initial datum $f^{\mathrm{in}} \in L^1(\mathbb{R}^{2d})$ satisfy 
		\begin{equation*}
			P_\infty :=\sup\limits_{(x,v)\in{\rm spt}(f^{\mathrm{in}})} |v|<\infty, \qquad  \int_{\mathbb{R}^{2d}} e^{\alpha|x|} f^{\mathrm{in}}(z) \di z \leq{\mathcal M}_{e}(f^{\mathrm{in}},\alpha) < \infty,
		\end{equation*}
		and  let $f$ be a global weak solution to \eqref{A-1}. Then we have
		\begin{equation} \label{New-2-1}
			\int_{B_{R_x(t)}^c  \times \bbr^d}   f(t, z) \di z \le \mathcal{M}_{e}(f^{\mathrm{in}}, \alpha) e^{\alpha P_\infty t-\alpha R_x(t)}.
		\end{equation}
			\item Suppose that the initial datum $f^{\mathrm{in}} \in L^1(\mathbb{R}^{2d})$ satisfy 
		\[
		{\mathcal M}_{e}(f^{\mathrm{in}}, \alpha, \delta) :=  \int_{\mathbb{R}^{2d}} e^{\alpha(|x|+ |v|^{\delta})} f^{\mathrm{in}}(z) \di z < \infty,\quad \alpha>0, \quad \delta\ge1.
		\]
			 Then we have
		\begin{equation} \label{New-2-1-1}
			\int_{ \bbr^d\times B_{R_x(t)}^c  }   f(t, z) \di z \le \mathcal{M}_{e}(f^{\mathrm{in}}, \alpha, \delta) e^{-\alpha \left(R_v(t)\right)^{\delta}}.
		\end{equation}
	\end{enumerate}

\end{corollary}
\begin{proof} 
\noindent(1) Note that 
\begin{align}
\begin{aligned} \label{New-2-2} 
&  e^{\alpha R_x(t)} \int_{B_{R_x(t)}^c \times \bbr^d} f(t, z) \di z   \leq 	\int_{B_{R_x(t)}^c \times \bbr^d} e^{\alpha | x|}f(t, z) \di z \\
& \hspace{2cm} \leq  \int_{\mathbb{R}^{2d}} e^{\alpha | x|}f(t, z) \di z  \leq e^{\alpha P_\infty t}\int_{\mathbb{R}^{2d}} e^{\alpha | x|}f^{\mathrm{in}}( z) \di z, \\
\end{aligned}
\end{align}
where we used (2) of Lemma \ref{L3.2} in the last inequality.  \newline

On the other hand, we have
\begin{equation} \label{New-2-3}
\int_{\mathbb{R}^{2d}} e^{\alpha | x|}f^{\mathrm{in}}( z) \di z \leq \int_{\mathbb{R}^{2d}} e^{\alpha (| x| + |v|) }f^{\mathrm{in}}( z) \di z = M_e(f^{\mathrm{in}}, \alpha) < \infty.
\end{equation}
Now, we combine \eqref{New-2-2} and \eqref{New-2-3} to get 
\[
 e^{\alpha R_x(t)} \int_{B_{R_x(t)}^c \times \bbr^d} f(t, z) \di z  \leq e^{\alpha P_\infty t}\int_{\mathbb{R}^{2d}} e^{\alpha | x|}f^{\mathrm{in}}( z) \di z \leq  e^{\alpha P_\infty t} M_e(f^{\mathrm{in}}, \alpha). 
 \]
This yields the desired estimates.\newline 

\noindent(2) The derivation for  \eqref{New-2-1-1} is similar to Corollary \ref{C3.1} (ii),  hence we omit it here.
\end{proof}
In the following lemma, we provide the time-derivative of the second velocity moment around the average velocity.
\begin{lemma}\label{L3.3}
	For $\tau \in (0, \infty]$, let $f \in {\mathcal C}([0, \tau); L^1(\mathbb{R}^{2d})) \cap L_+^\infty([0, \tau) \times \mathbb{R}^{2d})$ be a weak solution to \eqref{A-1} with initial datum $f^{\mathrm{in}}$ satisfying 
	\[
	\int_{\bbr^{2d}} (|x|^2 + |v|^2) f^{\mathrm{in}}(z) \di z < \infty. 
	\]
	Then the kinetic energy is non-increasing:
	\[
	\frac{\di}{\di t} \int_{{\mathbb R}^{2d}} |v - v_c(t)|^2 f(t, z) \di z =-\kappa \int_{{\mathbb R}^{4d}} \phi(|x-x_\star|) |v-v_\star|^2  f(t, z_\star) f(t, z) \di z_\star  \di z\le0.
	\]
\end{lemma}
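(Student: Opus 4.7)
The plan is to reduce the statement to a computation of the time derivative of the bare second velocity moment $\int |v|^2 f\,\di z$, exploit conservation of total mass and total momentum from Lemma \ref{L3.1}, and then symmetrize the resulting integral over $\bbr^{4d}$ by swapping the dummy variables $(x,v)\leftrightarrow(x_\star,v_\star)$ to recover the quadratic form $|v-v_\star|^2$.

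First I would use Lemma \ref{L3.1}(i) together with $\|f(t)\|_1\equiv 1$ to note that both $\|f(t)\|_1$ and $v_c(t) = \int v f(t,z)\,\di z$ are independent of time. Expanding
\[
\int_{\bbr^{2d}} |v-v_c(t)|^2 f(t,z)\,\di z = \int_{\bbr^{2d}} |v|^2 f(t,z)\,\di z - |v_c^{\mathrm{in}}|^2,
\]
reduces the problem to differentiating $\int |v|^2 f\,\di z$. Since $v\mapsto |v|^2$ is not compactly supported, the admissible test function in Definition \ref{D2.1} must be regularized; I would use a smooth cut-off $\varphi_R$ as in Lemma \ref{L3.1}, apply the weak formulation to $\psi(v)=|v|^2\varphi_R(v)$, and pass to the limit $R\to\infty$. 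The finite second moment $\int|v|^2f^{\mathrm{in}}\,\di z<\infty$, propagated via Lemma \ref{L3.1}(iii) applied with $D=2$, together with the tail bound on $\int_{|v|\geq R}|v|^2 f\,\di z$ from Corollary \ref{C3.1}, provides the needed uniform integrability for this limit.

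This yields
\[
\frac{\di}{\di t} \int_{\bbr^{2d}} |v|^2 f(t,z)\,\di z = 2\int_{\bbr^{2d}} \langle v, L[f](t,z)\rangle f(t,z)\,\di z = -2\kappa \int_{\bbr^{4d}} \phi(|x-x_\star|) \langle v, v-v_\star\rangle f(t,z_\star) f(t,z)\,\di z_\star \di z.
\]
Now I would perform the standard symmetrization: relabeling $(x,v)\leftrightarrow(x_\star,v_\star)$ and using $\phi(|x-x_\star|)=\phi(|x_\star-x|)$ gives
\[
\frac{\di}{\di t} \int_{\bbr^{2d}} |v|^2 f(t,z)\,\di z = -\kappa \int_{\bbr^{4d}} \phi(|x-x_\star|) \langle v-v_\star, v-v_\star\rangle f(t,z_\star) f(t,z)\,\di z_\star \di z,
\]
which is precisely the claimed identity, and non-positivity follows from $\phi\geq 0$ and $f\geq 0$.

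The main obstacle is the cut-off/passage-to-the-limit step, because $|v|^2$ and $v|v|^2$-type quantities are not compactly supported and $L[f]$ contains a non-local integral against $f$ itself. The delicate point is controlling the error terms arising from $\nabla_v(|v|^2\varphi_R) = 2v\varphi_R + |v|^2\nabla\varphi_R$ when coupled with $L[f]$: the $|v|^2\nabla\varphi_R$ contribution must vanish as $R\to\infty$. Since $|\nabla\varphi_R|\lesssim 1/R$ is supported on $\{R\leq|v|\leq 2R\}$ and $|L[f](t,x,v)|\lesssim \kappa(|v|+\int|v_\star|f\,\di z_\star)$, this error is bounded by $\frac{1}{R}\int_{|v|\geq R}|v|^2(|v|+1)f\,\di z$, which tends to zero once one has a third velocity moment, or more carefully by exploiting the uniform-in-time bound on $\int|v|^2 f\,\di z$ combined with $\|f(t)\|_\infty\leq \|f^{\mathrm{in}}\|_\infty$ and a slightly improved tail estimate — this is the only subtle analytic ingredient; everything else is bookkeeping.
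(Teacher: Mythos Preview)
Your proposal is correct and follows essentially the same route as the paper: both use conservation of momentum from Lemma~\ref{L3.1} to reduce to differentiating a second velocity moment, integrate by parts against $L[f]$, and symmetrize via $(x,v)\leftrightarrow(x_\star,v_\star)$. The paper differentiates $\int |v-v_c^{\mathrm{in}}|^2 f\,\di z$ directly rather than first expanding to $\int |v|^2 f\,\di z$, but this is cosmetic; your treatment of the cut-off limit is in fact more explicit than the paper's, which simply invokes the technique of Lemma~\ref{L3.1}. One minor remark on your last paragraph: the error term $\frac{1}{R}\int_{R\le|v|\le 2R}|v|^2\,|L[f]|\,f\,\di z$ does not actually require a third moment, since on the annulus $|v|/R\le 2$ and hence $\frac{|v|^3}{R}\le 2|v|^2$, so the whole error is controlled by $\int_{|v|\ge R}|v|^2 f\,\di z\to 0$, which follows directly from the propagated second-moment bound.
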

\begin{proof} It follows from Lemma \ref{L3.1} that 
	\begin{equation} \label{New-2-4}
	v_c(t) = v_c^{\mathrm{in}}, \quad t \geq 0. 
	\end{equation}
	We use \eqref{New-2-4} and $ \partial_t f = -\nabla_x \cdot (v f) - \nabla_v \cdot (L[f] f)$ to find 
	\begin{align*}
		\begin{aligned}
			&\frac{\di}{\di t} \int_{{\mathbb R}^{2d}} |v - v_c(t)|^2 f(t, z) \di z  =\frac{\di}{\di t} \int_{{\mathbb R}^{2d}} |v - v_c^{\mathrm{in}}|^2 f(t, z) \di z  \\
			& \hspace{0.5cm} = \int_{{\mathbb R}^{2d}}  |v - v_c^{\mathrm{in}}|^2 \partial_t f(t, z) \di z   =  -  \int_{{\mathbb R}^{2d}}   |v - v_c^{\mathrm{in}}|^2  \nabla_v \cdot (L[f] f) \di z  \\
			& \hspace{0.5cm}  = -2 \kappa \int_{{\mathbb R}^{4d}} \phi(|x-x_{\star}|) \langle v - v_c^{\mathrm{in}}, v-v_{\star} \rangle  f(t,  z_\star) f(t, z) \di z_\star \di z   \\
			& \hspace{0.5cm}   =2 \kappa \int_{{\mathbb R}^{4d}} \phi(|x-x_{\star}|) \langle v_{\star} - v_c^{\mathrm{in}}, v-v_{\star} \rangle  f(t,  z_\star) f(t, z) \di z_\star \di z   \\
			& \hspace{0.5cm}  =- \kappa \int_{{\mathbb R}^{4d}} \phi(|x-x_{\star}|) |v-v_{\star}|^2  f(t,  z_\star) f(t, z) \di z_\star \di z.
		\end{aligned}
	\end{align*}
\end{proof}		
With the above preparations, we can show the exponential emergence of weak flocking under special case $\beta=0$.
\begin{proposition} \label{P3.1}
Suppose that decay of communication weight and initial datum satisfy 
			\[ \beta = 0, \quad \kappa > 0, \quad  (|x|^2 + |v|^2)  f^{\mathrm{in}} \in L^1(\mathbb{R}^{2d}), \quad v_c(0) = 0, \]
			and let $f$ be a global weak solution to \eqref{A-1}.  Then we have 
			\begin{align*}\label{eq:300}
				\int_{\mathbb{R}^{2d}} |v|^2f(t, z) \di z =e^{-2 \kappa t}	\int_{\mathbb{R}^{2d}} |v|^2f^{\mathrm{in}}(z) \di z,  \quad  \forall~ t\ge0.
			\end{align*}
		\end{proposition}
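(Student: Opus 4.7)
The plan is to combine Lemma \ref{L3.3} with the special structure $\beta=0$ (which forces $\phi\equiv 1$) and the hypothesis $v_c(0)=0$ to reduce the differential inequality for the kinetic energy to a linear ODE, and then conclude by Gr\"onwall.

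First, I would observe that by part (i) of Lemma \ref{L3.1}, the mean velocity is conserved, so $v_c(t)=v_c^{\mathrm{in}}=0$ for all $t\ge 0$. In particular, $|v-v_c(t)|^2=|v|^2$, and therefore
\[
\int_{\mathbb{R}^{2d}}|v-v_c(t)|^2 f(t,z)\,\di z=\int_{\mathbb{R}^{2d}}|v|^2 f(t,z)\,\di z.
\]
Next, applying Lemma \ref{L3.3} with $\phi\equiv 1$ (because $\beta=0$) yields
\[
\frac{\di}{\di t}\int_{\mathbb{R}^{2d}}|v|^2 f(t,z)\,\di z=-\kappa\int_{\mathbb{R}^{4d}}|v-v_\star|^2 f(t,z_\star)f(t,z)\,\di z_\star\,\di z.
\]

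Now I would expand the square in the double integral. Using $\|f(t)\|_1=1$, the conservation of momentum, and $v_c(t)=0$, a direct computation gives
\[
\int_{\mathbb{R}^{4d}}|v-v_\star|^2 f(t,z_\star)f(t,z)\,\di z_\star\,\di z=2\int_{\mathbb{R}^{2d}}|v|^2 f(t,z)\,\di z-2\Bigl|\int_{\mathbb{R}^{2d}} v f(t,z)\,\di z\Bigr|^2,
\]
and the last term vanishes since $\int v f(t,z)\,\di z=v_c(t)=0$. Substituting back produces the linear identity
\[
\frac{\di}{\di t}\int_{\mathbb{R}^{2d}}|v|^2 f(t,z)\,\di z=-2\kappa\int_{\mathbb{R}^{2d}}|v|^2 f(t,z)\,\di z,
\]
and Gr\"onwall's lemma then yields the stated exponential decay.

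The argument is essentially routine once Lemma \ref{L3.3} is in hand; the only mild subtlety is justifying that the rigorous weak-solution framework supports the identity for $\frac{\di}{\di t}\int |v|^2 f\,\di z$, since $|v|^2$ is not a compactly supported test function. This can be handled by the smooth cut-off argument already used in Lemma \ref{L3.1} (iii) together with the integrability $|v|^2 f^{\mathrm{in}}\in L^1$, which propagates in time by Lemma \ref{L3.3}. No step requires a genuinely new idea, so I do not anticipate any real obstacle beyond this technical verification.
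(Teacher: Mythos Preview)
Your proposal is correct and follows essentially the same route as the paper: observe $\phi\equiv 1$ and $v_c(t)=0$, invoke the energy-dissipation identity of Lemma \ref{L3.3}, reduce the double integral to $2\int|v|^2 f$, and conclude by Gr\"onwall. Your write-up is in fact slightly more detailed than the paper's, since you spell out the expansion of $\int\!\!\int|v-v_\star|^2 f f$ and flag the cut-off justification explicitly.
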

		\begin{proof} By the assumptions, we have
		\[ \phi \equiv 1 \quad v_c(t) = v_c(0) = 0, \quad t > 0. \]
		Then, we use Lemma \ref{L3.2} to find 
\[ \frac{\di}{\di t} \int_{{\mathbb R}^{2d}} |v|^2 f(t, z) \di z = -\kappa \int_{{\mathbb R}^{4d}} |v - v_{\star}|^2 f(t, z_\star) f(t, z)  \di z_\star  \di z = -2 \kappa \int_{{\mathbb R}^{2d}} |v |^2 f(t, z) \di z. \]
			This yields an exponential decay estimate. 
		\end{proof}
\subsection{Proof of Theorem \ref{T2.1}}\label{sec:3.2}
Suppose the initial datum $f^{\mathrm{in}}$ satisfies positivity, boundedness and suitable decay conditions:
\[
f^{\mathrm{in}} \in (L^1 \cap L_+^\infty)(\mathbb{R}^{2d}), \quad (|x|^2 + e^{\alpha|v|})f^{\mathrm{in}} \in L^1(\mathbb{R}^{2d}), 
\]
and let $f$ be a global weak solution to \eqref{A-1} whose existence is guaranteed by Proposition \ref{P2.2}. Then, we use Lemma \ref{L3.1} and Lemma \ref{L3.2} to see 
				\begin{align}\label{vD}
				\int_{\mathbb{R}^{2d}} | v|^{D}f(t, z) \di z \le \int_{\mathbb{R}^{2d}} | v|^{D}f^{\mathrm{in}}(z) \di z, \quad 	\int_{\mathbb{R}^{2d}} e^{\alpha |v|}f(t, z) \di z \le \int_{\mathbb{R}^{2d}} e^{\alpha |v|}f^{\mathrm{in}}(z) \di z.
			\end{align}
Suppose that $g$ is another weak solution to \eqref{A-1} with the same initial datum $g^{\mathrm{in}} = f^{\mathrm{in}}$. We want to show that for $t \in [0, \tau)$, 
\[ f(t) \equiv g(t) \quad \mbox{a.e.~~in $\bbr^{2d}$}.  \]
To see this, we set  $(X_{f}(t),V_{f}(t))$ and $(X_{g}(t),V_{g}(t))$ to be the particle trajectories generated by $f$ and $g$, respectively:
\begin{equation} \label{New-D-1}
	\begin{cases}
		\displaystyle {\dot X}_f(t)=V_f (t), \quad t > 0,   \vspace{3pt}\\
		\displaystyle {\dot V}_f (t)= L[f](t, X_f(t), V_f(t)], \vspace{3pt}\\
		\displaystyle X_f(0)=x, \quad  V_f(0)=v, \vspace{3pt}\\
		\displaystyle f(t) =(X_{f}(t),V_{f}(t) )\# f^{\mathrm{in}},
	\end{cases}
\quad \mbox{and} \qquad  
	\begin{cases}
		\displaystyle {\dot X}_g(t)=V_g (t), \quad t > 0, \vspace{3pt}  \\
		\displaystyle {\dot V}_g (t)= L[g](t, X_g(t), V_g(t)], \vspace{3pt}\\
		\displaystyle X_g(0)=x, \quad  V_g(0)=v,\vspace{3pt} \\
		\displaystyle g(t) =(X_{g}(t),V_{g}(t) )\# f^{\mathrm{in}}.
	\end{cases}
\end{equation}
Next, we introduce a deviation functional $\Delta[f, g]$ between two particle trajectories which was first introduced in \cite{k6}:
\begin{equation*}
	\Delta[f,g](t):=\int_{\mathbb{R}^{2d}} \Big ( |X_{f}(t)-X_{g}(t)|+ |V_{f}(t)-V_{g}(t)| \Big )f^{\mathrm{in}}(z) \di z, \quad \forall~t\in[0,\tau].
\end{equation*}
Then, we claim that 
\begin{equation} \label{New-D-3}
\Delta[f,g](t) \equiv 0, \quad \forall~t \in [0, \tau].
\end{equation}
Once the claim \eqref{New-D-3} is verified, then we derive the uniqueness:
\[ f \equiv g. \]
Since 
\[ X_f(0)= X_g(0) \quad \mbox{and} \quad  V_f(0) = V_g(0), \]
it is easy to see 
\begin{equation} \label{New-D-4}
 \Delta[f,g](0) = 0.
 \end{equation}
{\it Proof of \eqref{New-D-3}}: It suffices to derive an integral form of  Grönwall's inequality for $\Delta[f, g]$. For this, we split the derivation into four steps. \newline

\noindent $\bullet$~Step A.1: We use $\eqref{New-D-1}_1$ and $\eqref{New-D-1}_2$ to get 
\begin{align}\label{4:23}
	\begin{aligned}
		|X_{f}(t)-X_{g}(t) | \le \int_0^t |V_{f}(s)-V_{g}(s) |\di s.
	\end{aligned}
\end{align}
This and Fubini's theorem yield
\begin{equation} \label{New-D-4-0}
\int_{\mathbb{R}^{2d}} |X_{f}(t)-X_{g}(t)| f^{\mathrm{in}}(z) \di z \leq  \int_0^t\int_{\mathbb{R}^{2d}} |V_{f}(s)-V_{g}(s)|  f^{\mathrm{in}}(z) \di z \di s.
\end{equation}

\vspace{0.2cm}

\noindent $\bullet$~Step A.2:  Recall that 
\begin{equation} \label{New-D-4-1}
L[f](t,x,v)=-\kappa \int_{\mathbb{R}^{2d}} \phi( |x-x_{\star}|) \left(v-v_{\star}\right)  f(t, z_{\star}) \di z_\star.
 \end{equation}
Again, we use $\eqref{New-D-1}_1$ and $\eqref{New-D-1}_2$ to obtain
\begin{equation} \label{New-D-5}
\frac{\di}{\di t} |V_f(t) - V_g(t)| \leq  \Big| L[f](t, X_f(t), V_f(t)) -  L[g](t, X_g(t), V_g(t))  \Big|.
 \end{equation}
We integrate \eqref{New-D-5} from $0$ to $t$, and then use $ (V_f - V_g)(0) = 0$, $\eqref{New-D-1}_4$,  to get
\begin{equation} \label{New-D-66}
| V_f(t) - V_g(t)| \leq \int_0^t \Big| L[f](s, X_f(s), V_f(s)) -  L[g](s, X_g(s), V_g(s)) \Big| \di s.
\end{equation}
For $s \leq t$, we set 
\begin{align}
\begin{aligned} \label{New-D-6}
& (x_{f,\star}(s),v_{f,\star}(s)) :=(X_{f}(s,0, x_{\star}, v_{\star}),V_{f}(s,0, x_{\star}, v_{\star})), \\
& (x_{g,\star}(s),v_{g,\star}(s)) :=(X_{g}(s,0, x_{\star}, v_{\star}),V_{g}(s,0, x_{\star}, v_{\star})).
\end{aligned}
\end{align}
Next, we use the Lipschitz continuity of $\phi$, \eqref{New-D-6} and 
\[ \| f^{\mathrm{in}} \|_1 = 1, \qquad   \frac{1}{(1+|X_{f}(s)-x_{f,\star}(s)|^2)^{\frac{\beta}{2}}} \leq 1 \]
 to find that for any $( x, v)\in{\rm spt}f^{\mathrm{in}}$
\begin{align} 
\begin{aligned} \label{423}
        & \Big| L[f](s, X_f(s), V_f(s)) -  L[g](s, X_g(s), V_g(s)) \Big|  \\
        & \hspace{0.2cm} = \kappa  \Big|  \int_{\mathbb{R}^{2d}} \phi( |X_f(s)-x_{\star}|) (V_f(s)-v_{\star} )  f(s, z_{\star}) \di z_\star \\
        & \hspace{1cm} - \int_{\mathbb{R}^{2d}} \phi( |X_g(s) -x_{\star}|) (V_g(s) - v_{\star})  {g}(s, z_{\star}) \di z_\star  \Big| \\	
	& \hspace{0.2cm}  \leq  \kappa \int_{\mathbb{R}^{2d}}\left |\frac{V_{f}(s)-v_{f,\star}(s)}{(1+|X_{f}(s)-x_{f,\star}(s)|^2)^{\frac{\beta}{2}}}-\frac{V_{g}(s)-v_{g,\star}(s)}{(1+|X_g(s)-x_{g,\star}(s)|^2)^{\frac{\beta}{2}}}\right |f^{\mathrm{in}}(z_{\star}) \di z_\star \\
	& \hspace{0.2cm}  \le  \kappa \int_{\mathbb{R}^{2d}}\left |\frac{V_{f}(s)-v_{f,\star}(s)}{(1+|X_{f}(s)-x_{f,\star}(s)|^2)^{\frac{\beta}{2}}}-\frac{V_{g}(s)-v_{g,\star}(s)}{(1+|X_{f}(s)-x_{f,\star}(s)|^2)^{\frac{\beta}{2}}}\right |f^{\mathrm{in}}(z_{\star}) \di z_\star  \\
	& \hspace{0.4cm} +  \kappa \int_{\mathbb{R}^{2d}}\left |\frac{V_{g}(s)-v_{g,\star}(s)}{(1+|X_{f}(s)-x_{f,\star}(s)|^2)^{\frac{\beta}{2}}}-\frac{V_{g}(s)-v_{g,\star}(s)}{(1+|X_{g}(s)-x_{g,\star}(s)|^2)^{\frac{\beta}{2}}}\right |f^{\mathrm{in}}(z_{\star}) \di z_\star  \\
	 & \hspace{0.2cm}   \le  \kappa \int_{\mathbb{R}^{2d}}  \left |\frac{V_{f}(s)-V_g(s)}{(1+|X_{f}(s)-x_{f,\star}(s)|^2)^{\frac{\beta}{2}}} \right |  +  \left |\frac{v_{f,\star}(s)-v_{g,\star}(s)}{(1+|X_{f}(s)-x_{f,\star}(s)|^2)^{\frac{\beta}{2}}} \right | f^{\mathrm{in}}(z_{\star}) \di z_\star  \\
	  & \hspace{0.4cm} +  \kappa \int_{\mathbb{R}^{2d}}\left |\frac{V_{g}(s)-v_{g,\star}(s)}{(1+|X_{f}(s)-x_{f,\star}(s)|^2)^{\frac{\beta}{2}}}-\frac{V_{g}(s)-v_{g,\star}(s)}{(1+|X_{g}(s)-x_{g,\star}(s)|^2)^{\frac{\beta}{2}}}\right |f^{\mathrm{in}}(z_{\star}) \di z_\star  \\
	  & \hspace{0.2cm}  \le  \kappa |V_{f}(s)-V_{g}(s) |+  \kappa \int_{\mathbb{R}^{2d}} |v_{f,\star}(s)-v_{g,\star}(s) |f^{\mathrm{in}}(z_{\star}) \di z_\star \\
	  & \hspace{0.4cm} +  \kappa C_{\beta} 
	\int_{\mathbb{R}^{2d}}\left( |v_{g,\star}(s)|+ |V_{g}(s) |\right) \left(|x_{f,\star}(s)-x_{g,\star}(s)|+ |X_{f}(s)-X_{g}(s) |\right)  |f^{\mathrm{in}}(z_{\star}) \di z_\star,
\end{aligned}
\end{align}
where $C_\beta$ is a Lipschitz constant for the communication weight function $\phi$. \newline

Now, we combine \eqref{New-D-5} and \eqref{423} to get 
\begin{align}
\begin{aligned} \label{New-D-7}
& \int_{\mathbb{R}^{2d}} | V_f(t) - V_g(t)|  f^{\mathrm{in}}(z) \di z   \\
& \hspace{0.5cm} \leq \int_0^t  \int_{\mathbb{R}^{2d}}   \Big| L[f](s, X_f(s), V_f(s)) -  L[g](s, X_g(s), V_g(s)) \Big|  f^{\mathrm{in}}(z) \di z  \di s \\
& \hspace{0.5cm} \leq  \kappa  \int_0^t   \int_{\mathbb{R}^{2d}}   |V_{f}(s)-V_{g}(s) | f^{\mathrm{in}}(z) \di z \di s \\
& \hspace{0.7cm}  + \kappa \int_0^t \int_{\mathbb{R}^{4d}} |v_{f,\star}(s)-v_{g,\star}(s) |f^{\mathrm{in}}(z_{\star})   f^{\mathrm{in}}(z) \di z_\star \di z  \di s \\
& \hspace{0.5cm} +  \kappa C_{\beta}
	\int_0^t \int_{\mathbb{R}^{4d}} \Big( |v_{g,\star}(s)|+ |V_{g}(s) | \Big) \\
	& \hspace{0.7cm}  \times  \Big(|x_{f,\star}(s)-x_{g,\star}(s)|+ |X_{f}(s)-X_{g}(s) | \Big) f^{\mathrm{in}}(z_{\star}) f^{\mathrm{in}}(z)   \di z_{\star} \di z \di s.
\end{aligned}
\end{align}
Finally, we combine \eqref{New-D-4} and \eqref{New-D-7} to see
\begin{align}
\begin{aligned} \label{New-D-8}
	\Delta[f,g](t) &= \int_{\mathbb{R}^{2d}} \Big( |X_{f}(t)-X_{g}(t)|+ |V_{f}(t)-V_{g}(t) | \Big) f^{\mathrm{in}}(z) \di  z \\
	&\leq 2  \kappa \int_0^t   \Delta(f,g)(s) \di s +   \kappa \int_0^t \int_{\mathbb{R}^{4d}} |v_{f,\star}(s)-v_{g,\star}(s) |f^{\mathrm{in}}(z_{\star})   f^{\mathrm{in}}(z) \di z_\star\di z  \di s \\
&+  \kappa C_{\beta}
	\int_0^t \int_{\mathbb{R}^{4d}} |v_{g,\star}(s)| |x_{f,\star}(s)-x_{g,\star}(s)|  f^{\mathrm{in}}(z_{\star})   f^{\mathrm{in}}(z) \di z_\star \di z \di s \\
	&+  \kappa C_{\beta}
	\int_0^t \int_{\mathbb{R}^{4d}}  |V_{g}(s)| |x_{f,\star}(s)-x_{g,\star}(s)| f^{\mathrm{in}}(z_{\star}) f^{\mathrm{in}}(z) \di z_\star \di z \di s \\
	& +   \kappa  C_{\beta}\int_0^t \int_{\mathbb{R}^{4d}}  |v_{g,\star}(s)| |X_{f}(s)-X_{g}(s) |  f^{\mathrm{in}}(z_{\star}) f^{\mathrm{in}}(z) \di z_\star \di z \di s \\ 
	& +  \kappa  C_{\beta}\int_0^t \int_{\mathbb{R}^{4d}}  |V_{g}(s) | |X_{f}(s)-X_{g}(s) |  f^{\mathrm{in}}(z_{\star}) f^{\mathrm{in}}(z) \di z_\star \di z \di s \\ 
	& =: 2  \kappa \int_0^t \Delta[f,g](s) \di s + \sum_{i=1}^{5} {\mathcal I}_{1i}.
\end{aligned}
\end{align}
Below, we estimate the term ${\mathcal I}_{1i}$ one by one. \newline

\noindent $\bullet$~Case A.1 (Estimate of ${\mathcal I}_{11}$): We use the Fubini theorem and $\| f^{\mathrm{in}} \|_1 = 1$ to get 
\begin{align}
\begin{aligned} \label{New-D-9}
 {\mathcal I}_{11} &=    \kappa \int_0^t \int_{\mathbb{R}^{4d}} |v_{f,\star}(s)-v_{g,\star}(s) |f^{\mathrm{in}}(z_{\star})   f^{\mathrm{in}}(z) \di z_\star \di z  \di s \\
 & =  \kappa  \int_0^t  \Big( \int_{\mathbb{R}^{2d}}  |v_{f,\star}(s)-v_{g,\star}(s) |f^{\mathrm{in}}(z_{\star})  \di z_\star \Big) \Big( \int_{\bbr^{2d}}  f^{\mathrm{in}}(z) \di z \Big) 
 \di s \\
 &=  \kappa \int_0^t \Delta[f,g](s) \di s.
\end{aligned}
\end{align}

\noindent $\bullet$~Case A.2 (Estimate of ${\mathcal I}_{12}$): For sufficiently large $R>0$, we have
\begin{align}
\begin{aligned} \label{New-D-10}
{\mathcal I}_{12} &=  \kappa C_{\beta}
	\int_0^t \int_{\mathbb{R}^{2d}}  |v_{g,\star}(s)| |x_{f,\star}(s)-x_{g,\star}(s)| f^{\mathrm{in}}(z_{\star}) \di z_\star  \di s \\
	&= \kappa C_{\beta}
	\int_0^t \int_{|v_{g,\star}(s)|> R}  |v_{g,\star}(s)| |x_{f,\star}(s)-x_{g,\star}(s)| f^{\mathrm{in}}(z_{\star}) \di z_\star   \di s \\
&\quad+	\kappa C_{\beta}
	\int_0^t \int_{ |v_{g,\star}(s)| \le R}  |v_{g,\star}(s)| |x_{f,\star}(s)-x_{g,\star}(s)| f^{\mathrm{in}}(z_{\star}) \di z_\star  \di s\\
	&\le  \kappa C_{\beta}
	\int_0^t \int_{|v_{g,\star}(s)|> R}  |v_{g,\star}(s)| |x_{f,\star}(s)-x_{g,\star}(s)| f^{\mathrm{in}}(z_{\star}) \di z_\star   \di s \\
	&\quad+	\kappa C_{\beta}R 
	\int_0^t \int_{|v_{g,\star}(s)| \le R}  |x_{f,\star}(s)-x_{g,\star}(s)| f^{\mathrm{in}}(z_{\star}) \di z_\star  \di s.\\
\end{aligned}
\end{align}

\noindent $\bullet$~Case A.3 (Estimate of ${\mathcal I}_{13}$): By direct estimate, we have
\begin{align}
\begin{aligned} \label{New-D-11}
{\mathcal I}_{13} &=  \kappa C_{\beta} \int_0^t \int_{\mathbb{R}^{4d}}  |V_{g}(s)| |x_{f,\star}(s)-x_{g,\star}(s)| f^{\mathrm{in}}(z_{\star}) f^{\mathrm{in}}(z) \di z_\star \di z \di s \\
& = \kappa C_{\beta} \int_0^t \Big( \int_{\mathbb{R}^{2d}} |x_{f,\star}(s)-x_{g,\star}(s)| f^{\mathrm{in}}(z_{\star}) \di z_\star \Big) \Big( \int_{\mathbb{R}^{2d}}  |V_{g}(s)| f^{\mathrm{in}}(z)    \di z  \Big) \di s.\\
\end{aligned}
\end{align}	
\noindent $\bullet$~Case A.4 (Estimate of ${\mathcal I}_{14}$): By direct estimate, we have
\begin{align}
\begin{aligned} \label{New-D-12}
{\mathcal I}_{14} &=   \kappa C_{\beta}\int_0^t \int_{\mathbb{R}^{4d}}  |v_{g,\star}(s)| |X_{f}(s)-X_{g}(s) |  f^{\mathrm{in}}(z_{\star}) f^{\mathrm{in}}(z) \di z_\star \di z \di s \\ 
&=  \kappa C_{\beta}\int_0^t \Big( \int_{\mathbb{R}^{2d}}   |v_{g,\star}(s)|  f^{\mathrm{in}}(z_{\star})  \di z_\star  \Big) \Big(  \int_{\mathbb{R}^{2d}}   |X_{f}(s)-X_{g}(s) | f^{\mathrm{in}}(z) \di z \Big) \di s.
\end{aligned}
\end{align}
\noindent $\bullet$~Case A.5 (Estimate of ${\mathcal I}_{15}$): By direct estimate, for sufficiently large $R$ we have
\begin{align}
\begin{aligned} \label{New-D-13}
{\mathcal I}_{15} &= \kappa  C_{\beta}\int_0^t \int_{\mathbb{R}^{4d}}  |V_{g}(s) | |X_{f}(s)-X_{g}(s) |  f^{\mathrm{in}}(z_{\star}) f^{\mathrm{in}}(z) \di z_\star \di z \di s \\ 
&=  \kappa C_{\beta}\int_0^t \Big( \int_{\mathbb{R}^{2d}}  |V_{g}(s) | |X_{f}(s)-X_{g}(s)|  f^{\mathrm{in}}(z) \di z      \Big) \di s \\
&= \kappa C_{\beta}\int_0^t \Big( \int_{|V_{g}(s) |>R}  |V_{g}(s) | |X_{f}(s)-X_{g}(s)|  f^{\mathrm{in}}(z) \di z      \Big) \di s \\
&\quad+ \kappa C_{\beta}\int_0^t \Big( \int_{|V_{g}(s) |\le R}  |V_{g}(s) | |X_{f}(s)-X_{g}(s)|  f^{\mathrm{in}}(z) \di z      \Big) \di s \\
&\le  \kappa C_{\beta}\int_0^t \Big( \int_{|V_{g}(s) |>R}  |V_{g}(s) | |X_{f}(s)-X_{g}(s)|  f^{\mathrm{in}}(z) \di z      \Big) \di s \\
&\quad+ \kappa C_{\beta}R\int_0^t \Big( \int_{|V_{g}(s) |\le R}  |X_{f}(s)-X_{g}(s)|  f^{\mathrm{in}}(z) \di z      \Big) \di s.
\end{aligned}
\end{align}
{Then, we collect all the estimates to derive 
\begin{align}
	\Delta[f,g](t)
	&=\int_{\mathbb{R}^{2d}}\Big(|X_f(t)-X_g(t)|+|V_f(t)-V_g(t)|\Big)f^{\mathrm{in}}(z) \di z \notag\\
	&\le C\kappa \int_0^t\int_{|V_g(s)|>R} |V_g(s)|\,|X_f(s)-X_g(s)|\,f^{\mathrm{in}}(z) \di z\di s \notag\\
	&\quad +(C\kappa+2\kappa C_\beta R)\int_0^t \Delta[f,g](s)\di s.
	\label{New-D-14}
\end{align}
For notational simplicity, we set 
\[
A_R:=C\kappa+2\kappa C_\beta R
\]
and
\[
E_R(t):=C\kappa \int_0^t\int_{|V_g(s)|>R} |V_g(s)|\,|X_f(s)-X_g(s)| f^{\mathrm{in}}(z) \di z \di s.
\]
Then, the relation \eqref{New-D-14} can be rewritten as
\[
\Delta[f,g](t)\le A_R\int_0^t \Delta[f,g](s)\di s + E_R(t).
\]
Hence, by the integral form of Gr\"onwall's lemma and the fact that $\Delta[f,g](0)=0$, we have
\begin{equation}
	\Delta[f,g](t)\le e^{A_R t} E_R(t).
	\label{New-D-1510}
\end{equation}
\noindent $\bullet$~Step A.3: By the Cauchy--Schwarz inequality, we have
\begin{align}
	E_R(t)
	&\le C\kappa \int_0^t
	\left(\int_{|V_g(s)|>R} |X_f(s)-X_g(s)|^2 f^{\mathrm{in}}(z)\di z\right)^{\frac12}
	\left(\int_{|V_g(s)|>R} |V_g(s)|^2 f^{\mathrm{in}}(z) \di z\right)^{\frac12}
	\di s.
	\label{New-D-151}
\end{align}
On the one hand, by Lemma \ref{L3.1}, we have
\begin{equation}
	\int_0^t
	\left(\int_{|V_g(s)|>R} |X_f(s)-X_g(s)|^2 f^{\mathrm{in}}(z)\di z\right)^{\frac12}
	\di s
	\le C_\tau.
	\label{New-D-152}
\end{equation}
On the other hand, since the initial datum has an exponential moment and \eqref{NN-3}$_1$ yields the propagation of the exponential velocity moment, we can choose $R$ sufficiently large such that
\[
|v|^2 \le e^{\frac{\alpha}{2}|v|}, \qquad |v|>R.
\]
Therefore, we have
\begin{align*}
\begin{aligned}
& \int_{|V_g(s)|>R} |V_g(s)|^2 f^{\mathrm{in}}(z)\di z \\
& \hspace{0.8cm} \le \int_{|V_g(s)|>R} e^{-\frac{\alpha}{2}R} e^{\frac{\alpha}{2}|V_g(s)|}|V_g(s)|^2 f^{\mathrm{in}}(z)\di z \\
& \hspace{0.8cm}  \le e^{-\frac{\alpha}{2}R}\int_{|V_g(s)|>R} e^{\alpha|V_g(s)|} f^{\mathrm{in}}(z)\di z \\
& \hspace{0.8cm}  \le e^{-\frac{\alpha}{2}R}\int_{\mathbb{R}^{2d}} e^{\alpha|V_g(s)|} f^{\mathrm{in}}(z)\di z \\
& \hspace{0.8cm} \le e^{-\frac{\alpha}{2}R}\int_{\mathbb{R}^{2d}} e^{\alpha|v|} f^{\mathrm{in}}(z)\di z.
\end{aligned}
\end{align*}
Hence, we have
\begin{equation}
	\sup_{0\le s\le \tau}
	\left(\int_{|V_g(s)|>R} |V_g(s)|^2 f^{\mathrm{in}}(z)\di z\right)^{\frac12}
	\le C e^{-\frac{\alpha}{4}R}.
	\label{New-D-153}
\end{equation}
Combining \eqref{New-D-151}, \eqref{New-D-152} and \eqref{New-D-153}, we obtain
\[
E_R(t)\le C_\tau \kappa e^{-\frac{\alpha}{4}R}.
\]
Substituting this into \eqref{New-D-1510}, we get
\[
\Delta[f,g](t)\le C_\tau \kappa \exp\!\left(A_R t-\frac{\alpha}{4}R\right)
= C_\tau \kappa \exp\!\left(C\kappa t+2\kappa C_\beta Rt-\frac{\alpha}{4}R\right).
\]
Thus, for any
\[
0\le t\le T_*:=\frac{\alpha}{16\kappa C_\beta},
\]
we have
\[
2\kappa C_\beta Rt-\frac{\alpha}{4}R
\le -\frac{\alpha}{8}R, \quad 
\Delta[f,g](t)\le C_\tau \kappa \exp\!\left(C\kappa t-\frac{\alpha}{8}R\right).
\]
Letting \(R\to\infty\), we conclude that
\[
\Delta[f,g](t)=0, \qquad \forall~ t\in[0,T_*].
\]
\noindent $\bullet$~Step A.4: Now we iterate the above argument. Since \(\Delta[f,g](t)=0\) on \([0,T_*]\), we have
\[
X_f(T_*,z)=X_g(T_*,z), \qquad V_f(T_*,z)=V_g(T_*,z)
\]
for \(f^{\mathrm{in}}\)-a.e. \(z\in\mathbb{R}^{2d}\). Hence two characteristic flows coincide at time \(T_*\), and we can repeat the same argument on the interval \([T_*,2T_*]\) successively. Since the constants involved depend only on the a priori bounds on \([0,\tau]\), and not on the initial time of the iteration, repeating this argument finitely many times yields
\[
\Delta[f,g](t)=0, \qquad \forall~ t\in[0,\tau].
\]}
This verifies \eqref{New-D-3} and completes the proof.
%%%%%%%%%%%%%%%%%%%%%%%%%%%%%%%%%%
%
%
%
%%%%%%%%%%%%%%%%%%%%%%%%%%%%%%%%%%
\section{Emergence of weak flocking}\label{sec:4}
\setcounter{equation}{0}
In this section, we provide the weak flocking estimate of weak solutions with decaying properties in phase space.  		
\subsection{Polynomially decaying distribution class}\label{sec:4.1} 
 Suppose that $\beta, \gamma, l_1, D_1, D_2,$ and $f^{\mathrm{in}}$ satisfy 
{\begin{align}
		\begin{aligned} \label{NNNN-1}
			& 0 \leq \beta < 1, \quad \kappa> 0, \quad  \gamma > 1, \quad  \gamma\beta<1,\quad l_1>1, \quad   D_1 > \frac{2l_1}{(l_1-1)\left(\gamma -1\right)},    \\
			&  D_2\ge \max\left\{D_1,~2l_1 \right\},\quad {\mathcal M}_p(f^{\mathrm{in}}, D_1, D_2) :=  \int_{\mathbb{R}^{2d}} (|x|^{D_1}+ |v|^{D_2}) f^{\mathrm{in}}(z) \di z < \infty, 
		\end{aligned}
\end{align}}
and let $f$ be a global weak solution to \eqref{A-1}. Then, we claim that
\begin{align}
\begin{aligned} \label{D-1}
& (i)~\lim_{t \to \infty} \int_{\mathbb{R}^{2d}} |v-  v^{\mathrm{in}}_c|^2 f(t, z) \di z  = 0: \quad & \mbox{weak velocity alignment}. \\
& (ii)~\sup\limits_{0\le t< \infty} \int_{\mathbb{R}^{2d}} | x - v^{\mathrm{in}}_ct- x^{\mathrm{in}}_c|^2 f(t, z) \di z <\infty: \quad & \mbox{weak spatial cohesion}.
\end{aligned}
\end{align}			
In what follows, we derive \eqref{D-1} one by one.  Let $z=( x, v),~z_\star = ( x_{\star}, v_{\star})$ be two phase points, and we define particle trajectories $(X(t), V(t)$ ad $(X_{\star}(t), V_{\star}(t))$ as solutions to the following system:
	\begin{align}
		\left\{\begin{aligned} \label{D-1-1}
			&\dot X(t)=V(t),\quad  &X(0)= x, \\
			&\dot V(t)=L[f](t,X(t),V(t)),\quad & V(0)= v,
		\end{aligned}\right.
	\end{align}
	and
	\begin{align}
		\left\{\begin{aligned} \label{D-1-2}
			&\dot X_{\star}(t)= V_{\star}(t),\quad  & X_{\star}(0)= x_{\star}, \\
			&\dot V_{\star}(t)=L[f](t,X_{\star}(t),V_{\star}(t)),\quad & V_{\star}(0)= v_{\star}.
		\end{aligned}\right.
	\end{align}
	
\subsubsection{Weak velocity alignment} \label{sec:4.1.1} Since the derivation is very lengthy, we split its proof into several steps. \newline

\noindent $\bullet$~{\bf Step B.1 (Identification of an effective region)}: Recall that the kinetic density $f(t)$ can have a full phase space $\bbr^{2d}$ as its support, hence particles can spread out in phase space. However, thanks to the decaying properties of $f$ in phase space, most of mass will be concentrated around some time-varying region and the rest of mass can be controlled to be small outside this time-varying region. In the sequel, we call this concentration region as an effective region. The motivation for this time-varying region follows from estimates of Corollary \ref{C3.1}:
\[
\int_{B^c_{R_x(t)}  \times \bbr_v^d}   f(t, z) \di z \le \frac{2^{D_1-1} \mathcal{M}_{p}(f^{\mathrm{in}}, D_1, D_2) (1 + t)^{D_1}}{R_x(t)^{D_1}}.
\]
To get the time-decay of $ \int_{B^c_{R_x(t)}  \times \bbr_v^d}   f(t, z) \di z $, we need to choose $R_x(t)$ so that 
\[  \frac{(1 + t)^{D_1}}{R_x(t)^{D_1}} \to 0 \quad \mbox{as $t \to \infty$}. \]
Hence, as long as we choose a super-linearly growing function $R_x(t)$, the total mass of $f$ on the region $B^c_{R_x(t)}  \times \bbr_v^d$ decays to zero as $t \to \infty$. \newline

\noindent For a positive constant $C_1$, we define an increasing time-dependent radius $R_x$:
	\begin{align}\label{NewD-4-4}
		R_x(t) = C_1(1+ t)^\gamma, \qquad \mbox{for some $\gamma > 1$}. \end{align}
Thus, we define effective time-varying cylindrical regions for one and two-particle phase spaces, respectively:
\[ \Omega_{\mbox{eff}}:= B_{R_x(t)} \times \bbr^d, \qquad  \Omega^2_{\mbox{eff}} := \Big( B_{R_x(t)} \times \bbr^d \Big) \times  \Big( B_{R_x(t)} \times \bbr^d  \Big).
\]
Note that the total mass outside the effective region decays to zero algebraically as $t \to \infty$:
\begin{equation} \label{D-1-2-1}
\int_{|x|\ge R_x(t)} f(t, z) \di z \le   \frac{\mathcal{M}_{p}(f^{\mathrm{in}}, D_1, D_2) (1 + t)^{D_{1}}}{\left(R_x(t)\right)^{D_1}} \lesssim \left(1+t\right)^{-D_1(\gamma-1)}.
\end{equation}

\vspace{0.5cm}

\noindent $\bullet$~{\bf Step B.2 (Velocity alignment functional over an effective region)}: Recall a velocity alignment functional introduced in \cite{k1} and Lemma \ref{L2.2} (iv):
\[ \mathcal{L}[f] :=\int_{\mathbb{R}^{2d}} | v- {v^{\mathrm{in}}_c}|^2f(t,z) \di z = \int_{\mathbb{R}^{2d}} |V(t)- v^{\mathrm{in}}_c|^2f^{\mathrm{in}}(z) \di z. \]
Thus, in order to see the time-decay of the term in L.H.S. of the above relation, it suffices to see the temporal decay of the term in the R.H.S..  By Lemma \ref{L3.2}, one can see that 
	\begin{align}
	\begin{aligned} \label{D-1-3}
	&  \dfrac{\di}{\di t} \int_{\mathbb{R}^{2d}} | v- v_c^{\mathrm{in}}|^2f(t,z) \di z = \frac{\di}{\di t}  \int_{\mathbb{R}^{2d}} |V(t)- v^{\mathrm{in}}_c|^2f^{\mathrm{in}}(z) \di z \\
	 & \hspace{1cm} =-\kappa\int_{\mathbb{R}^{4d}} |V(t)- V_{\star}(t)|^2 \phi(|X(t)-X_{\star}(t)|) f^{\mathrm{in}}(z) f^{\mathrm{in}}(z_{\star}) \di z_\star \di z. \\
	\end{aligned}
	\end{align}
To relate the R.H.S. of \eqref{D-1-3} with the velocity alignment functional, we need to take out the factor $ \phi(|X(t)-X_{\star}(t)|)$ by its minimum which is zero. Hence, we need to split the two-phase space into the union of effective regions and the rest of it so that on the effective region, we can take out $ \phi(|X(t)-X_{\star}(t)|)$ by its minimum over the effective region. 

\vspace{0.5cm}

\noindent $\bullet$~{\bf Step B.3 (Estimate of  $ \phi(|X(t)-X_{\star}(t)|)$ over an effective region)}: For a given $t > 0$, we consider two arbitrary particle trajectories $X(t)$ and $X_\star(t)$ in the effective region: 
	\[ |X(t)|\le R_x(t), \quad |X_{\star}(t)|\le R_x(t).\]
	We set 
	\[
	 \underline{\phi}_{R_x}(t) := \phi(2R_x(t))  .
	\]
Then, we use the non-increasing property of $\phi$  to see 	
	\begin{equation} \label{D-2}
	 |X(t)-X_{\star}(t)|\le 2R_x(t) \quad \mbox{and} \quad \inf\limits_{ x, x_{\star}\in B_{R_x(t)}}\phi(|x-x_{\star}|) \ge \phi(2R_x(t))  = \mathcal{O}(1) (1+t)^{-\beta\gamma}.
	\end{equation}
	
	\vspace{0.5cm}
	
\noindent $\bullet$~{\bf Step B.4 (Estimate of the derivative over an effective region)}: We return to \eqref{D-1-3} and rewrite the R.H.S. of \eqref{D-1-3}  into the integral over the effective region and the rest of it. More precisely, we have
	\begin{align}\label{V1}
		\begin{aligned}
			&-\int_{\mathbb{R}^{4d}} |V(t)- V_{\star}(t)|^2 \phi(|X(t)-X_{\star}(t)|) f^{\mathrm{in}}(z) f^{\mathrm{in}}(z_{\star}) \di z_\star \di z \\
			& \hspace{1cm} \leq - \int_{ \Omega^2_{\mbox{eff}}} |V(t)- V_{\star}(t)|^2 \phi(|X(t)-X_{\star}(t)|) f^{\mathrm{in}}(z) f^{\mathrm{in}}(z_{\star}) \di z_\star \di z \\
			& \hspace{1cm} \leq-  \underline{\phi}_{R_x}(t) \int_{\Omega^2_{\mbox{eff}}} |V(t)- V_{\star}(t)|^2 f^{\mathrm{in}}(z) f^{\mathrm{in}}(z_{\star}) \di z_\star \di z \\
			& \hspace{1cm} = - \underline{\phi}_{R_x}(t) \int_{\mathbb{R}^{4d}} |V(t)- V_{\star}(t)|^2 f^{\mathrm{in}}(z) f^{\mathrm{in}}(z_{\star}) \di z_\star \di z  \\
			&  \hspace{1.4cm} + \underline{\phi}_{R_x}(t) \int_{\mathbb{R}^{2d}} \int_{|X_{\star}(t)|> R_x(t)} |V(t)- V_{\star}(t)|^2 f^{\mathrm{in}}(z) f^{\mathrm{in}}(z_{\star}) \di z_\star \di z  \\
			&  \hspace{1.4cm} + \underline{\phi}_{R_x}(t)  \int_{|X(t)|> R_x(t)}  \int_{\mathbb{R}^{2d}} 	 |V(t)- V_{\star}(t)|^2 f^{\mathrm{in}}(z) f^{\mathrm{in}}(z_{\star}) \di z_\star \di z \\
			&\hspace{1.4cm} {- \underline{\phi}_{R_x}(t)  \int_{|X(t)|> R_x(t)}  \int_{|X_{\star}(t)|> R_x(t)} 	 |V(t)- V_{\star}(t)|^2 f^{\mathrm{in}}(z) f^{\mathrm{in}}(z_{\star}) \di z_\star \di z} \\
			&  \hspace{1cm} =: {\mathcal I}_{21} + {\mathcal I}_{22} + {\mathcal I}_{23}+{{\mathcal I}_{24}},
		\end{aligned}
	\end{align}
	where we used the relation:
\begin{align*}
\Omega^2_{\mbox{eff}} &= (B_{R_x(t)} \times \bbr^d) \times (B_{R_x(t)} \times \bbr^d) \\
&=  \bbr^{4d}  - \bbr^{2d} \times  (B^c_{R_x(t)} \times \bbr^d) -  (B^c_{R_x(t)} \times \bbr^d) \times \bbr^{2d} +{(B^c_{R_x(t)} \times \bbr^d)\times(B^c_{R_x(t)} \times \bbr^d)}.    
\end{align*}
In the following lemma, we estimate the terms ${\mathcal I}_{2i}$ in \eqref{V1} one by one. 
\begin{lemma} \label{L4.1}
 Suppose that $\beta, \gamma, l_1, D_1, D_2,$ and $f^{\mathrm{in}}$ satisfy \eqref{NNNN-1}. The terms ${\mathcal I}_{2i}$ in \eqref{V1} satisfy the following estimates:
\begin{align*}
\begin{aligned}
& (i)~ {\mathcal I}_{21} = -2 \underline{\phi}_{R_x(t)} \int_{\mathbb{R}^{2d}} |V(t)-v^{\mathrm{in}}_c|^2f^{\mathrm{in}}(z) \di z. \\
& (ii)~{\mathcal I}_{22} = {\mathcal I}_{23} \leq  \mathcal{O}(1) \left(1+t\right)^{-\frac{(l_1-1)D_1}{l_1}(\gamma-1)-\beta\gamma}.\\
& (iii)~{{\mathcal I}_{24} \le0}.
\end{aligned}
\end{align*}
\end{lemma}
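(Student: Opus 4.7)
The plan is to treat parts (i) and (ii) separately, with (i) being a purely algebraic identity and (ii) requiring a careful trade-off between moments and tail mass.

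For part (i), I would expand the quadratic inside $\mathcal{I}_{21}$ and use the fact that $\|f^{\mathrm{in}}\|_1 = 1$ to factor the double integral:
\begin{align*}
\int_{\mathbb{R}^{4d}} |V(t)-V_{\star}(t)|^2 f^{\mathrm{in}}(z) f^{\mathrm{in}}(z_{\star}) \di z_\star \di z
&= 2\int_{\mathbb{R}^{2d}} |V(t)|^2 f^{\mathrm{in}}(z) \di z - 2\Big|\int_{\mathbb{R}^{2d}} V(t) f^{\mathrm{in}}(z) \di z\Big|^2.
\end{align*}
Lemma \ref{L2.2}(ii) identifies the vector factor with $v_c^{\mathrm{in}}$, and then the standard variance decomposition $\int |V(t)|^2 f^{\mathrm{in}} \di z - |v_c^{\mathrm{in}}|^2 = \int |V(t)-v_c^{\mathrm{in}}|^2 f^{\mathrm{in}} \di z$ gives (i). This is essentially a one-line calculation.

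For part (ii), the symmetry $z \leftrightarrow z_\star$ immediately yields $\mathcal{I}_{22}=\mathcal{I}_{23}$, so I will only estimate $\mathcal{I}_{23}$. The first observation is that $\underline{\phi}_{R_x}(t)$ is not just bounded above by $1$ but in fact equals $\phi(2R_x(t))$, since $\phi$ is radial and non-increasing and the diameter of $B_{R_x(t)}$ is $2R_x(t)$; so $\underline{\phi}_{R_x}(t) \leq \mathcal{O}(1)(1+t)^{-\beta\gamma}$. This is the source of the $(1+t)^{-\beta\gamma}$ factor and is the key point that a naive argument would miss. Next, using $|V(t)-V_\star(t)|^2 \leq 2(|V(t)|^2 + |V_\star(t)|^2)$, I will split $\mathcal{I}_{23}$ into two pieces and treat the dominant one, namely
\[
\underline{\phi}_{R_x}(t) \int_{|X(t)|>R_x(t)} |V(t)|^2 f^{\mathrm{in}}(z) \di z,
\]
by Cauchy--Schwarz with exponents $(2,2)$:
\[
\int_{|X(t)|>R_x(t)} |V(t)|^2 f^{\mathrm{in}}(z) \di z \leq \Big(\int_{\mathbb{R}^{2d}} |V(t)|^4 f^{\mathrm{in}}(z) \di z\Big)^{1/2}\Big(\int_{|X(t)|>R_x(t)} f^{\mathrm{in}}(z) \di z\Big)^{1/2}.
\]
The assumption $D \geq 4$ together with Lemma \ref{L3.1}(iii) bounds the first factor uniformly in time (via $\int |v|^4 f(t,z)\di z \leq (\int |v|^D f(t,z) \di z)^{4/D} \leq C$), while the push-forward identity followed by the tail estimate \eqref{D-1-2-1} gives the second factor as $\mathcal{O}(1)(1+t)^{-D(\gamma-1)/2}$. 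The remaining piece involving $|V_\star(t)|^2$ factors, and its tail contribution decays even faster, like $(1+t)^{-D(\gamma-1)}$. Multiplying by the upper bound on $\underline{\phi}_{R_x}(t)$ yields precisely
\[
\mathcal{I}_{23} \leq \mathcal{O}(1)(1+t)^{-\frac{D}{2}(\gamma-1)-\beta\gamma}.
\]

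The main technical point, though not a deep obstacle, is recognizing that one must use the two-sided scaling $\underline{\phi}_{R_x}(t) \sim (1+t)^{-\beta\gamma}$ (both as lower bound in Step B.3 and as upper bound here) and that the threshold $D \geq 4$ is exactly what allows the $L^2$ splitting of $|V(t)|^2$ against the tail of $f^{\mathrm{in}}$. A more delicate Hölder split with exponents $(D/2,D/(D-2))$ would only require $D>2$ but would produce the exponent $-(D-2)(\gamma-1)-\beta\gamma$, which exceeds $-\frac{D}{2}(\gamma-1)-\beta\gamma$ precisely when $D \geq 4$, so the chosen Cauchy--Schwarz version is the cleanest match to the stated bound.
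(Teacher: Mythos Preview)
Your proposal is correct and follows essentially the same route as the paper. For (i) you expand the square directly while the paper inserts $\pm v_c^{\mathrm{in}}$ and uses $\int (V(t)-v_c^{\mathrm{in}})f^{\mathrm{in}}\di z=0$, but these are algebraically equivalent; for (ii) you work with $\mathcal{I}_{23}$ while the paper works with $\mathcal{I}_{22}$, and in both cases the same $|V-V_\star|^2 \le 2(|V|^2+|V_\star|^2)$ split, the Cauchy--Schwarz step on the non-factoring piece (requiring the fourth moment, hence $D\ge 4$), the direct factorization of the other piece, and the bound $\underline{\phi}_{R_x}(t)=\phi(2R_x(t))=\mathcal{O}((1+t)^{-\beta\gamma})$ combined with Corollary~\ref{C3.1} yield the stated decay.
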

\begin{proof} 
\noindent (i)~We use $\| f \|_{1} = 1$ and Lemma \ref{L2.2} to find 
\[  \int_{\mathbb{R}^{2d}} (V(t)-v^{\mathrm{in}}_c) f^{\mathrm{in}}(z) \di z  = 0.    \]
Furthermore, we have
\begin{align}\label{I11}
	\begin{aligned}
		{\mathcal I}_{21} &= - \underline{\phi}_{R_x}(t) \int_{\mathbb{R}^{4d}} |V(t)- V_{\star}(t)|^2   f^{\mathrm{in}}(z) f^{\mathrm{in}}(z_{\star}) \di z_\star \di z \\
		&=-  \underline{\phi}_{R_x}(t)  \int_{\mathbb{R}^{4d}} |V(t)-v^{\mathrm{in}}_c+v^{\mathrm{in}}_c- V_{\star}(t)|^2 f^{\mathrm{in}}(z) f^{\mathrm{in}}(z_{\star}) \di z_\star \di z   \\
		&=-2  \underline{\phi}_{R_x}(t)  \int_{\mathbb{R}^{2d}} |V(t)-v^{\mathrm{in}}_c|^2f^{\mathrm{in}}(z) \di z.
	\end{aligned}
\end{align}
\noindent (ii)~Note that 
	\begin{align}\label{I2}
		\begin{aligned}
			{\mathcal I}_{22} &=  \underline{\phi}_{R_x}(t)  \int_{\mathbb{R}^{2d}}\int_{|X_{\star}(t)|> R_x(t)} |V(t)- V_{\star}(t)|^2 f^{\mathrm{in}}(z) f^{\mathrm{in}}(z_{\star}) \di z_\star \di z	\\
			&\le 2   \underline{\phi}_{R_x}(t)  \int_{\mathbb{R}^{2d}}\int_{|X_{\star}(t)|> R_x(t)}\Big ( |V_{\star}(t)|^2+ |V(t) |^2 \Big) f^{\mathrm{in}}(z) f^{\mathrm{in}}(z_{\star}) \di z_\star \di z \\
			& =: {\mathcal I}_{221} + {\mathcal I}_{222}.
	\end{aligned}
	\end{align}
Next, we estimate the term ${\mathcal I}_{22i}$ one by one. \newline

\noindent $\bullet$~Case B.1:~We use the Hölder inequality, $D_2\ge 2l_1$, and Lemma \ref{L3.1} to derive 
	\begin{align}
	\begin{aligned} \label{D-3}
	{\mathcal I}_{221}& = 2   \underline{\phi}_{R_x}(t)  \int_{\mathbb{R}^{2d}}\int_{|X_{\star}(t)|> R_x(t)} |V_{\star}(t)|^2 f^{\mathrm{in}}(z) f^{\mathrm{in}}(z_{\star}) \di z_\star \di z \\
	& = 2   \underline{\phi}_{R_x}(t)   \int_{|X_{\star}(t)|> R_x(t)} | V_{\star}(t)|^2 f^{\mathrm{in}}(z_{\star}) \di z_\star \\
	&\le 2  \underline{\phi}_{R_x}(t)   \left(\int_{\mathbb{R}^{2d}} |V_{\star}(t)|^{2l_1} f^{\mathrm{in}}(z_\star) \di z_\star \right)^{\frac{1}{l_1}}\left(\int_{|X_{\star}(t)|> R_x(t)}f^{\mathrm{in}}(z_\star) \di z_\star \right)^{\frac{l_1-1}{l_1}}\\
		&\le 2  \underline{\phi}_{R_x}(t)  \left(\mathcal{M}_p(f^{\mathrm{in}}, D_1 , 2l_1)\right)^{\frac{1}{l_1}}\left(\int_{|X_{\star}(t)|> R_x(t)}f^{\mathrm{in}}(z_\star) \di z_\star \right)^{\frac{l_1-1}{l_1}}.
	\end{aligned}
	\end{align}

\noindent $\bullet$~Case B.2:~We use the Fubini theorem and Lemma \ref{L3.1} to find 	
\begin{align}
	\begin{aligned} \label{D-4}
{\mathcal I}_{222} &=2   \underline{\phi}_{R_x}(t)   \int_{\mathbb{R}^{2d}}\int_{|X_{\star}(t)|> R_x(t)} | V(t)|^2  f^{\mathrm{in}}(z) f^{\mathrm{in}}(z_{\star}) \di z_\star \di z     \\
	&=2  \underline{\phi}_{R_x}(t)  \Big( \int_{|X_{\star}(t)|> R_x(t)}f^{\mathrm{in}}(z_{\star}) \di z_\star \Big) \cdot \Big( \int_{\mathbb{R}^{2d}} |V(t)|^2 f^{\mathrm{in}}(z) \di z \Big) \\
	&\leq 2  \underline{\phi}_{R_x}(t)  {\mathcal M}_p(f^{\mathrm{in}}, 2, 2) \Big( \int_{|X_{\star}(t)|> R_x(t)}f^{\mathrm{in}}(z_{\star}) \di z_\star \Big).
	\end{aligned}
	\end{align}
In \eqref{I2}, we combine  \eqref{D-2}, \eqref{D-3}, \eqref{D-4} and Corollary \ref{C3.1} to find the desired estimate:
	\begin{align}\label{nwe-I2}
		\begin{aligned}
			{\mathcal I}_{22} & \le2  \underline{\phi}_{R_x}(t)  \Bigg[ \left(\mathcal{M}_p(f^{\mathrm{in}},D_1, 2l_1) \right)^{\frac{1}{l_1}}\left(\int_{|X_{\star}(t)|> R_x(t)}f^{\mathrm{in}}(z_\star) \di z_\star \right)^{\frac{l_1-1}{l_1}}\\
			&\quad+ \mathcal{M}_p(f^{\mathrm{in}},2,2) \int_{|X_{\star}(t)|> R_x(t)} f^{\mathrm{in}}(z_\star) \di z_\star \Bigg] \\
		 &\le2  \underline{\phi}_{R_x}(t)  \Bigg[ \left(\mathcal{M}_p(f^{\mathrm{in}}, D_1, 2l_1) \right)^{\frac{1}{l_1}}\left(\frac{2^{{D_1}-1} {\mathcal M}_p(f^{\mathrm{in}}, D_1, 2l_1) (1 + t)^{D_1}}{R_x(t)^{D_1}}\right)^{\frac{l_1-1}{l_1}}\\
		 &\quad+\mathcal{M}_p(f^{\mathrm{in}},2,2) \frac{2^{{D_1}-1} \mathcal{M}_{p}(f^{\mathrm{in}}, D_1, 2l_1) (1 + t)^{D_1}}{R_x(t)^{D_1}} \Bigg] \\
			&\leq  \mathcal{O}\left(\left(1+t\right)^{-\frac{(l_1-1)D_1}{l_1}(\gamma-1)-\beta\gamma}\right) \quad \mbox{for any $D>0$.} 
		\end{aligned}
	\end{align}

	\noindent {(iii)~Note that \[{\mathcal I}_{24}=-\underline{\phi}_{R_x}(t)  \int_{|X(t)|> R_x(t)}  \int_{|X_{\star}(t)|> R_x(t)} 	 |V(t)- V_{\star}(t)|^2 f^{\mathrm{in}}(z) f^{\mathrm{in}}(z_{\star}) \di z_\star \di z\le0.\]}
\end{proof}
In \eqref{V1}, we combine the estimates in Lemma \ref{L4.1} to find 
	\begin{align}\label{V11}
		\begin{aligned}
			&\dfrac{\di}{\di t} \int_{\mathbb{R}^{2d}} |V(t)- v^{\mathrm{in}}_c|^2f^{\mathrm{in}}(z) \di z\\ 
			&\hspace{1cm} \le-2\kappa  \underline{\phi}_{R_x}(t) \int_{\mathbb{R}^{2d}} |V(t)- v^{\mathrm{in}}_c|^2f^{\mathrm{in}}(z) \di z +\mathcal{O}(1) \kappa (1+t )^{-\Big(\frac{(l_1-1)D_1}{l_1}(\gamma-1)+ \beta\gamma\Big)}.\\
		\end{aligned}
	\end{align}
	By Grönwall's lemma, we have 
	\begin{align}\label{V3}
		\begin{aligned}
			&\int_{\mathbb{R}^{2d}} |V(t)- v^{\mathrm{in}}_c|^2f^{\mathrm{in}}(z) \di z \\ 
			&  \hspace{1cm}\le e^{-2\kappa\int_0^t  \underline{\phi}_{R_x}(s)   \di s}\int_{\mathbb{R}^{2d}} | v- v^{\mathrm{in}}_c|^2f^{\mathrm{in}}(z) \di z \\
			&\hspace{1.4cm}+ {\mathcal O}(1)  \int_0^te^{-2 \kappa\int_s^t  \underline{\phi}_{R_x}(\tau) \di \tau} \left(1+s\right)^{-\Big(\frac{(l_1-1)D_1}{l_1}(\gamma-1)+ \beta\gamma\Big)} \di s.\\
			& \hspace{1cm}= e^{ -2 \kappa \int_0^t  \underline{\phi}_{R_x}(s)  \di s}\int_{\mathbb{R}^{2d}} | v- v_c|^2f^{\mathrm{in}}(z) \di z 
		\\
		&\hspace{1.4cm}+ {\mathcal O}(1)  \int_0^{\frac{t}{2}}e^{-2\kappa \int_s^t  \underline{\phi}_{R_x}(\tau) \di \tau} \left(1+s\right)^{-\Big(\frac{(l_1-1)D_1}{l_1}(\gamma-1)+ \beta\gamma\Big)} \di s\\
			& \hspace{1.4cm}+ {\mathcal O}(1)  \int_{\frac{t}{2}}^{t}e^{-2 \kappa\int_s^t  \underline{\phi}_{R_x(\tau)} \di \tau} \left(1+s\right)^{-\Big(\frac{(l_1-1)D_1}{l_1}(\gamma-1)+ \beta\gamma\Big)} \di s\\
			& \hspace{1cm}\le e^{-2\kappa\int_0^t  \underline{\phi}_{R_x}(s) \di s}\int_{\mathbb{R}^{2d}} | v- v^{\mathrm{in}}_c|^2f^{\mathrm{in}}(z) \di z \\
			&\hspace{1.4cm}+ {\mathcal O}(1) e^{-2\kappa \int_{\frac{t}{2}}^t  \underline{\phi}_{R_x}(\tau) \di \tau} \int_0^{\frac{t}{2}} \left(1+s\right)^{-\Big(\frac{(l_1-1)D_1}{l_1}(\gamma-1)+ \beta\gamma\Big)} \di s\\
			& \hspace{1.4cm} +\mathcal{O}(1)  \left(\left(1+\frac{t}{2}\right)^{-\Big(\frac{(l_1-1)D_1}{l_1}(\gamma-1)+ \beta\gamma\Big)}\right)\int_{\frac{t}{2}}^{t}e^{\int_s^t-2 \kappa \underline{\phi}_{R_x}(\tau) \di \tau}\di s\\
			& \hspace{1cm}=:{\mathcal I}_{31}+{\mathcal I}_{32}+{\mathcal I}_{33}.
		\end{aligned}
	\end{align}

	\begin{lemma} \label{L4.2}
	 Suppose that $\beta, \gamma, l_1, D_1, D_2,$ and $f^{\mathrm{in}}$ satisfy \eqref{NNNN-1}.	The terms ${\mathcal I}_{3i}$ in \eqref{V3} satisfy the following estimates:
			\begin{align}
		\begin{aligned} \label{D-6-6}
		&(i)~ |{\mathcal I}_{31}|\le e^{-|\mathcal{O}(1)|  (1+t)^{1-\beta\gamma}}\int_{\mathbb{R}^{2d}} | v- v^{\mathrm{in}}_c|^2f^{\mathrm{in}}(z) \di z.\\
		&(ii)~|{\mathcal I}_{32}|\lesssim {\mathcal O}(1) e^{-|\mathcal{O}(1)|  (1+t )^{1-\beta\gamma}}(1+t)^{1-\Big(\frac{(l_1-1)D_1}{l_1}(\gamma-1)+ \beta\gamma\Big)}.\\
		&(iii)|{\mathcal I}_{33}|\lesssim  (1+t)^{-\frac{(l_1-1)D_1}{l_1}(\gamma-1)}.
	\end{aligned}
	\end{align}
		\end{lemma}
\begin{proof}
	Note that we have the following relations:	
		\[ 0\le\beta\gamma <1,\quad \underline{\phi}_{R_x}(s)= \mathcal{O}(1) (1+s)^{-\beta\gamma} \quad \mbox{and} \quad R_x(t) = C_1 (1 + t)^{\gamma}. \]
	
	\noindent (i) We have 	\begin{align}
	\begin{aligned} \label{NewD-6}
 e^{-2\kappa\int_0^t  \underline{\phi}_{R_x(s)}  \di s}\le e^{-|\mathcal{O}(1)|  (1+t)^{1-\beta\gamma}}. \\
\end{aligned}
\end{align}
This yields the desired estimate.

		\noindent (ii) 	It is easy to verify \begin{align}
	\begin{aligned} \label{NewwD-6}
	&~ e^{-2\kappa \int_{\frac{t}{2}}^t  \underline{\phi}_{R_x}(\tau) \di \tau}\le  {\mathcal O}(1) e^{-|\mathcal{O}(1)|  (1+t )^{1-\beta\gamma}}, \\
	&\int_0^{\frac{t}{2}} \left(1+s\right)^{-\Big(\frac{(l_1-1)D_1}{l_1}(\gamma-1)+ \beta\gamma\Big)} \di s  \\
	& \hspace{0.3cm} =  \begin{cases}
		\ln (1 + \frac{t}{2}), \quad & \mbox{if}~\frac{(l_1-1)D_1}{l_1}(\gamma-1)+ \beta\gamma = 1, \\
		\frac{1}{\frac{(l_1-1)D_1}{l_1}(\gamma-1)+ \beta\gamma -1} \Big(  1 -  (1 + \frac{t}{2})^{1-\Big(\frac{(l_1-1)D_1}{l_1}(\gamma-1)+ \beta\gamma\Big)} \Big), \quad & \mbox{if}~ \frac{(l_1-1)D_1}{l_1}(\gamma-1)+ \beta\gamma \neq 1.
	\end{cases} \\
\end{aligned}
\end{align}
Then, we have 
\[|{\mathcal I}_{32}|\lesssim {\mathcal O}(1) e^{-|\mathcal{O}(1)|  (1+t )^{1-\beta\gamma}}(1+t)^{1-\Big(\frac{(l_1-1)D_1}{l_1}(\gamma-1)+ \beta\gamma\Big)}.\]

	\noindent (iii) Note that 
\begin{equation} \label{NewwwD-6}
\int_s^t  \underline{\phi}_{R_x}(\tau)  \di \tau =  {\mathcal O}(1)  \int_s^t (1 + \tau)^{-\beta  \gamma} \di \tau = \frac{{\mathcal O}(1)}{1-\beta  \gamma } 
	\Big((1 + s)^{1-\beta \gamma} - (1 + t)^{1-\beta \gamma}    \Big).
\end{equation}
	 Since \(\beta\gamma<1\), by the mean value theorem, for \(s\in [t/2,t]\),
\[
(1+t)^{1-\beta\gamma}-(1+s)^{1-\beta\gamma}
\ge (1-\beta\gamma)(1+t)^{-\beta\gamma}(t-s).
\]
Hence,
\begin{align*}
\begin{aligned}
\int_{\frac{t}{2}}^{t}e^{\int_s^t-2 \kappa \underline{\phi}_{R_x}(\tau) \di \tau} \di s  &\lesssim \int_{\frac{t}{2}}^{t}e^{\left(-2\kappa (1+t)^{-\beta\gamma}(t-s)\right)} \di s \\
& \lesssim \int_0^{t/2}\exp\!\left(-2\kappa (1+t)^{-\beta\gamma}u\right) du \lesssim (1+t)^{\beta\gamma}, \quad \beta\gamma<1.
\end{aligned}
\end{align*}
This yields the desired estimate.
\end{proof}
Finally, we combine \eqref{V3} and Lemma \ref{L4.2} to derive 
\begin{equation} \label{D-7}
\int_{\mathbb{R}^{2d}} |V(t)- v^{\mathrm{in}}_c|^2f^{\mathrm{in}}(z) \di z \leq {\mathcal O}(1) e^{-|\mathcal{O}(1)|  (1+t)^{1-\beta\gamma}} + {\mathcal O}(1)  (1+ t)^{-\frac{(l_1-1)D_1}{l_1}(\gamma-1)}. 
\end{equation}
Note that system parameters $\beta, \gamma$ and $D_1$ satisfy the following relations:
\[ 
\beta\gamma<1, \quad 0 <  \frac{(l_1-1)D_1}{l_1}(\gamma-1).
\]
Then, the following relation holds:
\[ 
 e^{-|\mathcal{O}(1)|  (1+t)^{1-\beta\gamma}} \ll (1+ t)^{-\frac{(l_1-1)D_1}{l_1}(\gamma-1)}, \quad \mbox{for}~~t \gg 1, 
\]
and we have the asymptotic weak velocity alignment:
\begin{equation} \label{D-8}
\int_{\mathbb{R}^{2d}} |V(t)- v^{\mathrm{in}}_c|^2f^{\mathrm{in}}(z) \di z \lesssim  {\mathcal O}(1)  (1+ t)^{-\frac{(l_1-1)D_1}{l_1}(\gamma-1)}.
\end{equation}
\subsubsection{Weak spatial cohesion} \label{sec:4.1.2}
In this part, we provide the weak spatial cohesion estimate  $\eqref{D-1}_2$ of a weak solution. First, we use the Cauchy-Schwarz inequality to see
	\begin{align}\label{X1}
		\begin{aligned}
			&\dfrac{\di}{\di t}\int_{\mathbb{R}^{2d}} |X(t)-v^{\mathrm{in}}_ct- x^{\mathrm{in}}_c |^2f^{\mathrm{in}}(z) \di z \\ 
			& \hspace{0.5cm} =\int_{\mathbb{R}^{2d}}2 \Big \langle X(t)-v^{\mathrm{in}}_ct- x^{\mathrm{in}}_c,  V(t)-v^{\mathrm{in}}_c \Big \rangle f^{\mathrm{in}}(z) \di z \\ 
			& \hspace{0.5cm} \le2\left(\int_{\mathbb{R}^{2d}} |X(t)-v^{\mathrm{in}}_ct- x^{\mathrm{in}}_c|^2f^{\mathrm{in}}(z) \di z \right)^{\frac{1}{2}}\left(\int_{\mathbb{R}^{2d}} |V(t)-v^{\mathrm{in}}_c |^2f^{\mathrm{in}}(z) \di z \right)^{\frac{1}{2}}\\
			& \hspace{0.5cm} \le2\left(\int_{\mathbb{R}^{2d}} |X(t)-v^{\mathrm{in}}_ct- x^{\mathrm{in}}_c|^2f^{\mathrm{in}}(z) \di z \right)^{\frac{1}{2}}\left(\int_{\mathbb{R}^{2d}} |V(t)- v^{\mathrm{in}}_c|^2f^{\mathrm{in}}(z) \di z \right)^{\frac{1}{2}}.
		\end{aligned}
	\end{align}
	This yields
\begin{align*} 
\begin{aligned}
& \dfrac{\di}{\di t} \Big( \int_{\mathbb{R}^{2d}} |X(t)-v^{\mathrm{in}}_ct- x^{\mathrm{in}}_c|^2f^{\mathrm{in}}(z) \di z \Big)^{\frac{1}{2}}  \\
& \hspace{1.5cm} \le  \Big( \int_{\mathbb{R}^{2d}} |V(t)- v^{\mathrm{in}}_c|^2f^{\mathrm{in}}(z) \di z \Big)^{\frac{1}{2}} =  {\mathcal O}(1)  (1+ t)^{- \frac{1}{2} \frac{(l_1-1)D_1}{l_1}(\gamma-1)}.
\end{aligned}
\end{align*}
We integrate the above relation over $t$ to get 
\begin{align}
\begin{aligned} \label{D-9}
&  \left(\int_{\mathbb{R}^{2d}} |X(t)-v^{\mathrm{in}}_ct- x^{\mathrm{in}}_c|^2f^{\mathrm{in}}(z) \di z \right)^{\frac{1}{2}} \\
& \hspace{1cm} \leq  \left(\int_{\mathbb{R}^{2d}} |X^{\mathrm{in}}- x^{\mathrm{in}}_c|^2f^{\mathrm{in}}(z) \di z \right)^{\frac{1}{2}} + {\mathcal O}(1) \int_0^t (1+ s)^{- \frac{1}{2}\frac{(l_1-1)D_1}{l_1}(\gamma-1)} \di s.
\end{aligned}
\end{align}
If we choose a large $D_1$ as in \eqref{NNNN-1} such that 
\[  - \frac{1}{2} \frac{(l_1-1)D_1}{l_1}(\gamma-1) < -1, \quad \mbox{i.e.,} \quad D_1 > \frac{2 l_1}{(l_1-1)(\gamma -1)}. \]
Then, the second term in the right-hand side of \eqref{D-9} is integrable, hence we have
\[
  \left(\int_{\mathbb{R}^{2d}} |X(t)-v^{\mathrm{in}}_ct- x^{\mathrm{in}}_c|^2f^{\mathrm{in}}(z) \di z \right)^{\frac{1}{2}} \leq \left(\int_{\mathbb{R}^{2d}} |X^{\mathrm{in}}- x^{\mathrm{in}}_c|^2f^{\mathrm{in}}(z) \di z \right)^{\frac{1}{2}}  + C
\]
for some positive constant $C$. This yields
\[
\int_{\mathbb{R}^{2d}} |X(t)-v^{\mathrm{in}}_ct- x^{\mathrm{in}}_c|^2f^{\mathrm{in}}(z) \di z \leq {\mathcal O}(1) \Big( \int_{\mathbb{R}^{2d}} |X^{\mathrm{in}}- x^{\mathrm{in}}_c|^2f^{\mathrm{in}}(z) \di z + 1 \Big ).
\]
Now, we take a supremum of the above relation over all $t$ to get the weak spatial cohesion:
\[ \sup\limits_{0\le t< \infty} \int_{\mathbb{R}^{2d}} | x - v^{\mathrm{in}}_ct- x^{\mathrm{in}}_c |^2 f(t, z) \di z <\infty.\]
\begin{remark}\label{RE:4.1}
Below, we provide two comments on the first assertion of Theorem \ref{T2.2}. 
\begin{enumerate}
\item
%	We require the exponent $D\ge4$ in the above proof since we use the Cauchy-Schwartz inequality 
%\[\sum a_ib_i\le \left(\sum a_i^2\right)^{\frac{1}{2}}\left(\sum b_i^2\right)^{\frac{1}{2}} \]
%in \eqref{D-3} of Lemma \ref{L4.1}. If we use the Hölder inequality 
%\[\sum a_ib_i\le \left(\sum a_i^{l_1}\right)^{\frac{1}{l_1}}\left(\sum b_i^{l_2}\right)^{\frac{1}{l_2}} ~\left(l_2=\frac{l_1}{l_1-1}\right)\]
%in \eqref{D-3} of Lemma \ref{L4.1}, the lower bound of 
If we choose a common positive exponent $D=\min\{D_1,D_2\}$ such that 
\[D>\inf\limits_{l_1>1}\left(\max \Big \{\frac{2l_1}{(l_1-1)(\gamma -1)}, ~2l_1 \Big \}\right).\]
Then, we can also establish the weak flocking estimates.
\item
{	If we take \(\gamma=\frac{1-\varepsilon}{\beta}\) with \(0<\varepsilon<1-\beta\), then it is enough to assume
	\[
	D=\min\{D_1,D_2\}>
	\inf_{l_1>1}\left(
	\max\left\{
	\frac{2l_1}{l_1-1}\frac{\beta}{1-\varepsilon-\beta},
	\,2l_1
	\right\}
	\right).
	\]
	The above infimum can be computed explicitly. Indeed, balancing the two terms gives
	\[
	\frac{2l_1}{l_1-1}\frac{\beta}{1-\varepsilon-\beta}=2l_1,
	\]
	and hence
	\[
	l_1=1+\frac{\beta}{1-\varepsilon-\beta}.
	\]
	Therefore, we have
	\[
	\inf_{l_1>1}\left(
	\max\left\{
	\frac{2l_1}{l_1-1}\frac{\beta}{1-\varepsilon-\beta},
	\,2l_1
	\right\}
	\right)
	=2+\frac{2\beta}{1-\varepsilon-\beta}=
	\frac{2(1-\varepsilon)}{1-\varepsilon-\beta}.
	\]
	For instance, if \(\beta=0.9\) and \(\varepsilon=0.05\), then the threshold becomes
	\[
	\frac{2(1-\varepsilon)}{1-\varepsilon-\beta}
	=
	38,
	\]
	so that one needs \(D>38\). This reflects the fact that as \(\beta\to1\), the required moment order \(D\) tends to infinity. In contrast, if \(\beta=0.01\) and \(\varepsilon=0.01\), then
	\[
	\frac{2(1-\varepsilon)}{1-\varepsilon-\beta}
	=
	\frac{1.98}{0.98}
	\approx 2.0205,
	\]
	so that it is enough to take \(D>2.0205\). Thus, when \(\beta\to0\), the required exponent \(D\) can be chosen arbitrarily close to \(2\).
}
\end{enumerate}
\end{remark}
\subsection{Exponentially decaying distribution class}\label{sec:4.2}
In this subsection, we first study the weak flocking estimates of weak solution with exponentially decaying initial distributions:
\[  {\mathcal M}_e(f^{\mathrm{in}},\alpha) =  \int_{\bbr^{2d}} e^{\alpha(|x| + |v|)} f^{\mathrm{in}}(z) \di z < \infty. \]
Then, for $\alpha > 0$, the following estimate holds
\[ \int_{\bbr^{2d}} (|x|^D + |v|^D) f^{\mathrm{in}}(z) \di z \leq C  \int_{\bbr^{2d}} e^{\alpha(|x| + |v|)} f^{\mathrm{in}}(z) \di z, \]
for any $D > 0$. Therefore, $f^{\mathrm{in}}$ itself belongs to a polynomially decaying distribution class. Hence, the analysis in the previous subsection can be applied to derive weak flocking estimates when $\beta<1$.  Then weak flocking \eqref{A-5} emerges with super-polynomial decay asymptotically. \newline

Furthermore, if we additionally require that system parameters and initial datum satisfy
{
\begin{align} 
\begin{aligned} \label{NNN-111}
&\delta>1, \quad \beta\in\Bigg[0,\frac{\delta-1}{\delta}\Bigg), \quad \kappa > 0, \quad \alpha > 0, \\
& {\mathcal M}_e(f^{\mathrm{in}}, \alpha, \delta) = \int_{\mathbb{R}^{2d}} e^{\alpha \left(| x|+| v |^{\delta}\right)} f^{\mathrm{in}}(z) \di z < \infty.
\end{aligned}
\end{align}}
Then, we can show that weak flocking \eqref{A-5} emerges at least exponentially fast. Since most analysis in the previous subsection can be done for our current case, we only point out some differences, whenever we need to mention them.
\subsubsection{Estimate of exponential position moment}

We first use the weak flocking estimate to control the growth of the velocity and position along particle trajectories. To simplify the notation, we assume that
\[
x^{\mathrm{in}}_c=v^{\mathrm{in}}_c=0.
\]
By the weak flocking estimate \eqref{D-8}, we have
\begin{equation} \label{D-88}
	\int_0^{\infty}\left(\int_{\mathbb{R}^{2d}} |V(t)|^2f^{\mathrm{in}}(z) \di z\right)^{\frac{1}{2}} \di t \le C_v.
\end{equation}

\begin{lemma}\label{L4.3}
	Suppose that $\beta,\alpha,\delta,\kappa$ and $f^{\mathrm{in}}$ satisfy \eqref{NNN-111}. Then the following velocity estimate holds:
	\[
	\sup_{0\le t <\infty}|V(t)|\le |v|+\kappa C_v.
	\]
\end{lemma}

{\begin{proof}
		For $\varepsilon>0$, set
		\[
		\Psi_\varepsilon(t):=\big(|V(t)|^2+\varepsilon\big)^{\frac12}.
		\]
		Then
		\[
		\frac{\di}{\di t}\Psi_\varepsilon(t)
		=
		\frac{V(t)}{\big(|V(t)|^2+\varepsilon\big)^{\frac12}}\cdot \dot V(t).
		\]
		Using \eqref{B-1-1}, we obtain
		\begin{align*}
			\frac{\di}{\di t}\Psi_\varepsilon(t)
			&=
			-\kappa\int_{\mathbb R^{2d}}\phi(|X(t)-x_{\star}|)
			\frac{V(t)}{\big(|V(t)|^2+\varepsilon\big)^{\frac12}}
			\cdot (V(t)-v_{\star})\,f(t,z_{\star}) \di z_{\star}  \\
			&\le
			\kappa\int_{\mathbb R^{2d}}\phi(|X(t)-x_{\star}|)
			|v_{\star}|\,f(t,z_{\star}) \di z_{\star}  \\
			&\le
			\kappa\int_{\mathbb R^{2d}} |v_{\star}|\,f(t,z_{\star}) \di z_{\star}.
		\end{align*}
		By the push-forward representation of \(f(t)\) and the Cauchy--Schwarz inequality,
		\[
		\int_{\mathbb R^{2d}} |v_{\star}|\,f(t,z_{\star}) \di z_{\star}
		=
		\int_{\mathbb R^{2d}} |V(t,z_{\star})|\,f^{\mathrm{in}}(z_{\star}) \di z_{\star}
		\le
		\left(\int_{\mathbb R^{2d}} |V(t,z)|^2 f^{\mathrm{in}}(z) \di z\right)^{\frac12}.
		\]
		Therefore,
		\[
		\frac{\di}{\di t}\Psi_\varepsilon(t)
		\le
		\kappa
		\left(\int_{\mathbb R^{2d}} |V(t,z)|^2 f^{\mathrm{in}}(z) \di z\right)^{\frac12}.
		\]
		Integrating over \([0,t]\), we get
		\[
		\Psi_\varepsilon(t)
		\le
		\Psi_\varepsilon(0)
		+\kappa\int_0^t
		\left(\int_{\mathbb R^{2d}} |V(s,z)|^2 f^{\mathrm{in}}(z) \di z\right)^{\frac12}\di s.
		\]
		Letting \(\varepsilon\to0\) and using \eqref{D-88}, we obtain
		\[
		|V(t)|
		\le
		|v|
		+\kappa C_v.
		\]
		Taking the supremum over \(t\ge0\) yields
		\[
		\sup_{0\le t<\infty}|V(t)|\le |v|+\kappa C_v.
		\]
	\end{proof}
}

\begin{lemma}\label{L4.4}
{	Suppose that $\beta,\alpha,\delta,\kappa$ and $f^{\mathrm{in}}$ satisfy \eqref{NNN-111}. Then there exists a positive constant \(C\) such that
	\begin{equation}\label{New4.28}
		\int_{\mathbb{R}^{2d}} e^{\alpha | x|} f(t,z) \di z
		\le C\, e^{2\alpha \left(1-\frac{1}{\delta}\right)t^{\frac{\delta}{\delta-1}}},
		\qquad t\gg 1.
	\end{equation}}
\end{lemma}

\begin{proof}
{	By the push-forward representation,
	\[
	\int_{\mathbb{R}^{2d}} e^{\alpha | x|} f(t,z) \di z
	= \int_{\mathbb{R}^{2d}} e^{\alpha | X(t)|} f^{\mathrm{in}}(z) \di z.
	\]
	Moreover,
	\[
	|X(t)|
	\le |x|+\int_0^t |V(s)| \di s.
	\]
	Using Lemma \ref{L4.3}, we obtain
	\[
	|X(t)|\le |x|+\int_0^t (|v|+\kappa C_v)\di s
	=|x|+|v|t+\kappa C_v t.
	\]
	Hence,
	\begin{align}
	\begin{aligned} \label{New4.29}
	& \int_{\mathbb{R}^{2d}} e^{\alpha | X(t)|} f^{\mathrm{in}}(z) \di z \\
	& \hspace{1cm} \le \int_{\mathbb{R}^{2d}} e^{\alpha |x|+\alpha|v|t+\alpha \kappa C_v t} f^{\mathrm{in}}(z) \di z \le e^{\alpha \kappa C_v t} \int_{\mathbb{R}^{2d}} e^{\alpha |x|+\alpha|v|t} f^{\mathrm{in}}(z) \di z.
	\end{aligned}	
	\end{align}
	Furthermore, by Young's inequality with conjugate exponents \(\delta\) and \(\frac{\delta}{\delta-1}\),
	\[
	|v|t \le \frac{1}{\delta}|v|^\delta+\left(1-\frac{1}{\delta}\right)t^{\frac{\delta}{\delta-1}}.
	\]
	Thus,
	\begin{align}
		\int_{\mathbb{R}^{2d}}e^{\alpha |x|+\alpha|v|t} f^{\mathrm{in}}(z) \di z
		&\le \int_{\mathbb{R}^{2d}} e^{\alpha |x|+\frac{\alpha}{\delta}|v|^{\delta}+\alpha\left(1-\frac{1}{\delta}\right)t^{\frac{\delta}{\delta-1}}} f^{\mathrm{in}}(z) \di z.
		\label{New4.30}
	\end{align}
	Combining \eqref{New4.29}, \eqref{New4.30}, and the assumption
	\[
 {\mathcal M}_e(f^{\mathrm{in}}, \alpha, \delta) =	\int_{\mathbb{R}^{2d}} e^{\alpha \left(| x|+| v |^{\delta}\right)} f^{\mathrm{in}}(z) \di z < \infty,
	\]
	we obtain
	\begin{align*}
		\int_{\mathbb{R}^{2d}} e^{\alpha | x|} f(t,z) \di z
		&= \int_{\mathbb{R}^{2d}} e^{\alpha | X(t)|} f^{\mathrm{in}}(z) \di z\\
		&\le e^{\alpha\left(1-\frac{1}{\delta}\right)t^{\frac{\delta}{\delta-1}}+\alpha \kappa C_v t}
		\int_{\mathbb{R}^{2d}} e^{\alpha |x|+\frac{\alpha}{\delta}|v|^{\delta}} f^{\mathrm{in}}(z) \di z\\
		&\le e^{\alpha\left(1-\frac{1}{\delta}\right)t^{\frac{\delta}{\delta-1}}+\alpha \kappa C_v t}
		\int_{\mathbb{R}^{2d}} e^{\alpha |x|+\alpha|v|^{\delta}} f^{\mathrm{in}}(z) \di z.
	\end{align*}
	Since \(\frac{\delta}{\delta-1}>1\), the factor \(e^{\alpha \kappa C_v t}\) can be absorbed by the super-exponential term. Therefore, there exists a positive constant \(C\) such that
	\[
	\int_{\mathbb{R}^{2d}} e^{\alpha | x|} f(t,z) \di z
	\le Ce^{2\alpha \left(1-\frac{1}{\delta}\right)t^{\frac{\delta}{\delta-1}}}.
	\]}
\end{proof}
Now, we define an increasing time-dependent radius $R_x$:
\begin{align}\label{NewD444}
	R_x(t) = C_1(1+ t)^{\frac{\delta}{\delta-1}}, \qquad \mbox{for some $C_1=3 \left(1-\frac{1}{\delta}\right) $}. \end{align}
Thus, we can similarly define effective time-varying cylindrical regions for one and two-particle phase spaces, respectively:
\[ \Omega_{\mbox{eff}}:= B_{R_x(t)} \times \bbr^d, \qquad  \Omega^2_{\mbox{eff}} := \Big( B_{R_x(t)} \times \bbr^d \Big) \times  \Big( B_{R_x(t)} \times \bbr^d  \Big).
\]
Then the total mass outside the effective region decays to zero exponentially as $t \to \infty$:
\begin{equation} \label{NewD121}
	\int_{|x|\ge R_x(t)} f(t, z) \di z \lesssim   \frac{ {\mathcal M}_e(f^{\mathrm{in}}, \alpha, \delta) e^{2\alpha \left(1-\frac{1}{\delta}\right)t^{\frac{\delta}{\delta-1}}}}{e^{\alpha R_x(t)}} \lesssim e^{-\alpha \left(1-\frac{1}{\delta}\right)t^{\frac{\delta}{\delta-1}}}.
\end{equation}
\begin{remark}
	{Note that the exponential decay of
\[	\int_{|x|\ge R_x(t)} f(t, z) \di z \]
	in \eqref{NewD121} is the contrasted difference with the case of polynomially decaying distributions.}
\end{remark}
\subsubsection{Exponential weak flocking estimate}
{Next, we establish exponential weak flocking estimate, i,e, the second assertions in Theorem \ref{T2.2}. Similarly to polynomial decay case, we can use the non-increasing property of $\phi$  to see 	
\begin{equation} \label{NewD-2}
	|X(t)-X_{\star}(t)|\le 2R_x(t) \quad \mbox{and} \quad \underline{\phi}_{R_x}(t)  = \phi(2R_x(t))  = \mathcal{O}(1) (1+t)^{-\beta\frac{\delta}{\delta-1}}.
\end{equation}
Recall that we have 
	\begin{align}
	\begin{aligned} \label{NewD-1-3}
		&  \dfrac{\di}{\di t} \int_{\mathbb{R}^{2d}} | v- v_c^{\mathrm{in}}|^2f(t,z) \di z   \\
		& \hspace{1cm} =-\kappa\int_{\mathbb{R}^{4d}} |V(t)- V_{\star}(t)|^2 \phi(|X(t)-X_{\star}(t)|) f^{\mathrm{in}}(z) f^{\mathrm{in}}(z_{\star}) \di z_\star \di z \\& \hspace{1cm} \le -\kappa  \underline{\phi}_{R_x}(t) \int_{\mathbb{R}^{4d}} |V(t)- V_{\star}(t)|^2 f^{\mathrm{in}}(z) f^{\mathrm{in}}(z_{\star}) \di z_\star \di z  \\
		&  \hspace{1cm} +\kappa \underline{\phi}_{R_x}(t) \int_{\mathbb{R}^{2d}} \int_{|X_{\star}(t)|> R_x(t)} |V(t)- V_{\star}(t)|^2 f^{\mathrm{in}}(z) f^{\mathrm{in}}(z_{\star}) \di z_\star \di z  \\
		&  \hspace{1cm} + \kappa \underline{\phi}_{R_x}(t)  \int_{|X(t)|> R_x(t)}  \int_{\mathbb{R}^{2d}} 	 |V(t)- V_{\star}(t)|^2 f^{\mathrm{in}}(z) f^{\mathrm{in}}(z_{\star}) \di z_\star \di z \\
		& \hspace{1cm} =:{\mathcal I}_{41}+{\mathcal I}_{42}+{\mathcal I}_{43}.
	\end{aligned}
\end{align}}
\begin{lemma}\label{L4.5}
	Suppose that $\beta,\alpha,\delta,\kappa$ and $f^{\mathrm{in}}$ satisfy \eqref{NNN-111}. The terms ${\mathcal I}_{4i}$ in \eqref{NewD-1-3} satisfy the following estimates:
	\begin{align*}
		\begin{aligned}
			& (i)~ {\mathcal I}_{41} = -2\kappa \underline{\phi}_{R_x(t)} \int_{\mathbb{R}^{2d}} |V(t)-v^{\mathrm{in}}_c|^2f^{\mathrm{in}}(z) \di z. \\
			& (ii)~{\mathcal I}_{42} = {\mathcal I}_{43} \leq\mathcal{O}\left(e^{-\alpha\frac{l_1-1}{l_1} \left(1-\frac{1}{\delta}\right)t^{\frac{\delta}{\delta-1}}}\right), \quad l_1>1.\\
		\end{aligned}
	\end{align*}
\end{lemma}
\begin{proof}
	Since (i) is same as in Lemma \ref{L4.1}, we omit it here.\newline 
	
	\noindent (ii)~Note that 
	\begin{align}\label{newI2}
		\begin{aligned}
			{\mathcal I}_{42}
			&\le 2 \kappa  \underline{\phi}_{R_x}(t)  \int_{\mathbb{R}^{2d}}\int_{|X_{\star}(t)|> R_x(t)}\Big ( |V_{\star}(t)|^2+ |V(t) |^2 \Big) f^{\mathrm{in}}(z) f^{\mathrm{in}}(z_{\star}) \di z_\star \di z \\
			& =: {\mathcal I}_{421} + {\mathcal I}_{422}.
		\end{aligned}
	\end{align}
	Next, we estimate the term ${\mathcal I}_{42i}$ one by one. \newline
	
	\noindent $\bullet$~Case B.1:~We use  same argument in Lemma \ref{L4.1} to derive 
	\begin{align}
		\begin{aligned} \label{newD-3}
			{\mathcal I}_{421}& = 2  \kappa  \underline{\phi}_{R_x}(t)  \int_{\mathbb{R}^{2d}}\int_{|X_{\star}(t)|> R_x(t)} |V_{\star}(t)|^2 f^{\mathrm{in}}(z) f^{\mathrm{in}}(z_{\star}) \di z_\star \di z \\
			& = 2  \kappa  \underline{\phi}_{R_x}(t)   \int_{|X_{\star}(t)|> R_x(t)} | V_{\star}(t)|^2 f^{\mathrm{in}}(z_{\star}) \di z_\star \\
			&\le 2  \kappa \underline{\phi}_{R_x}(t)   \left(\int_{\mathbb{R}^{2d}} |V_{\star}(t)|^{2l_1} f^{\mathrm{in}}(z_\star) \di z_\star \right)^{\frac{1}{l_1}}\left(\int_{|X_{\star}(t)|> R_x(t)}f^{\mathrm{in}}(z_\star) \di z_\star \right)^{\frac{l_1-1}{l_1}}\\
			&\le 2  \kappa \underline{\phi}_{R_x}(t)  \left(\mathcal{M}_p(f^{\mathrm{in}}, D_1 , 2l_1)\right)^{\frac{1}{l_1}}\left(\int_{|X_{\star}(t)|> R_x(t)}f^{\mathrm{in}}(z_\star) \di z_\star \right)^{\frac{l_1-1}{l_1}}.
		\end{aligned}
	\end{align}
	
	\noindent $\bullet$~Case B.2:~We use the Fubini theorem and Lemma \ref{L3.1} to find 	
	\begin{align}
		\begin{aligned} \label{newD-4}
			{\mathcal I}_{422} &=2   \kappa \underline{\phi}_{R_x}(t)   \int_{\mathbb{R}^{2d}}\int_{|X_{\star}(t)|> R_x(t)} | V(t)|^2  f^{\mathrm{in}}(z) f^{\mathrm{in}}(z_{\star}) \di z_\star \di z     \\
			&=2  \kappa \underline{\phi}_{R_x}(t)  \Big( \int_{|X_{\star}(t)|> R_x(t)}f^{\mathrm{in}}(z_{\star}) \di z_\star \Big) \cdot \Big( \int_{\mathbb{R}^{2d}} |V(t)|^2 f^{\mathrm{in}}(z) \di z \Big) \\
			&\leq 2  \kappa \underline{\phi}_{R_x}(t)  {\mathcal M}_p(f^{\mathrm{in}}, 2, 2) \Big( \int_{|X_{\star}(t)|> R_x(t)}f^{\mathrm{in}}(z_{\star}) \di z_\star \Big).
		\end{aligned}
	\end{align}
	In \eqref{newI2}, we combine  \eqref{NewD-2}, \eqref{newD-3}, \eqref{newD-4} and \eqref{NewD121} to find the desired estimate:
	\begin{align}\label{newnwe-I2}
		\begin{aligned}
			{\mathcal I}_{42} & \le2 \kappa  \underline{\phi}_{R_x}(t)  \Bigg[ \left(\mathcal{M}_p(f^{\mathrm{in}},D_1, 2l_1) \right)^{\frac{1}{l_1}}\left(\int_{|X_{\star}(t)|> R_x(t)}f^{\mathrm{in}}(z_\star) \di z_\star \right)^{\frac{l_1-1}{l_1}}\\
			&\quad+ \mathcal{M}_p(f^{\mathrm{in}},2,2) \int_{|X_{\star}(t)|> R_x(t)} f^{\mathrm{in}}(z_\star) \di z_\star \Bigg] \\
			&\leq  \mathcal{O}\left(e^{-\alpha\frac{l_1-1}{l_1} \left(1-\frac{1}{\delta}\right)t^{\frac{\delta}{\delta-1}}}\right), \quad l_1>1.
		\end{aligned}
	\end{align}			
\end{proof}
{In \eqref{NewD-1-3}, we combine the estimates in Lemma \ref{L4.5} to find 
\begin{align}\label{NewV11}
	\begin{aligned}
		&\dfrac{\di}{\di t} \int_{\mathbb{R}^{2d}} |V(t)- v^{\mathrm{in}}_c|^2f^{\mathrm{in}}(z) \di z\\ 
		&\hspace{1cm} \le-2\kappa  \underline{\phi}_{R_x}(t) \int_{\mathbb{R}^{2d}} |V(t)- v^{\mathrm{in}}_c|^2f^{\mathrm{in}}(z) \di z +\mathcal{O}(1) \kappa e^{-\alpha\frac{l_1-1}{l_1} \left(1-\frac{1}{\delta}\right)t^{\frac{\delta}{\delta-1}}}.\\
	\end{aligned}
\end{align}
By Grönwall's lemma, $\frac{\delta}{\delta-1}>1$ and \[\underline{\phi}_{R_x}(t) = \phi(2R_x(t))  = \mathcal{O}(1) (1+t)^{-\beta\frac{\delta}{\delta-1}},\] we have the weak velocity alignment:
\begin{align}\label{NewV311}
	\begin{aligned}
		&\int_{\mathbb{R}^{2d}} |V(t)- v^{\mathrm{in}}_c|^2f^{\mathrm{in}}(z) \di z\\ 
		& \hspace{0.5cm} \le e^{-2\kappa \int_0^t  \underline{\phi}_{R_x}(s)  \di s}\int_{\mathbb{R}^{2d}} | v- v^{\mathrm{in}}_c|^2f^{\mathrm{in}}(z) \di z+ \mathcal{O} (1)  \int_0^{t}e^{-2 \kappa\int_s^t  \underline{\phi}_{R_x}(\tau) \di \tau}e^{-\alpha\frac{l_1-1}{l_1} \left(1-\frac{1}{\delta}\right)s^{\frac{\delta}{\delta-1}}} \di s\\
		& \hspace{0.5cm} \le e^{-2\kappa \int_0^t  \underline{\phi}_{R_x}(s) \di s}\left\{\int_{\mathbb{R}^{2d}} | v- v^{\mathrm{in}}_c|^2f^{\mathrm{in}}(z) \di z+ \mathcal{O} (1)  \int_0^{t}e^{2 \kappa\int_0^s  \underline{\phi}_{R_x}(\tau) \di \tau}e^{-\alpha\frac{l_1-1}{l_1} \left(1-\frac{1}{\delta}\right)s^{\frac{\delta}{\delta-1}}} \di s\right\}\\
		& \hspace{0.5cm} \le\mathcal{O} (1)  e^{-2\kappa \int_0^t  \underline{\phi}_{R_x}(s) \di s}\\
		& \hspace{0.5cm}\le {\mathcal O}(1) e^{-|\mathcal{O}(1)|  (1+t )^{1-\beta\frac{\delta}{\delta-1}}}.
	\end{aligned}
\end{align}
Similar to Section \ref{sec:4.1.2}, we have 
\begin{align}\label{X222}
	\begin{aligned}
		\quad\dfrac{\di}{\di t}\left(\int_{\mathbb{R}^{2d}} |X(t)-v^{\mathrm{in}}_ct- x^{\mathrm{in}}_c |^2f^{\mathrm{in}}(z) \di z\right)^{\frac{1}{2}}\le\left(\int_{\mathbb{R}^{2d}} |V(t)- v^{\mathrm{in}}_c |^2f^{\mathrm{in}}(z) \di z\right)^{\frac{1}{2}}.
	\end{aligned}
\end{align}
Note that in \eqref{NNN-111} we have 
\[ 1-\beta\frac{\delta}{\delta-1}>0, \]
then weak spatial cohesion estimate  $\eqref{D-1}_2$ follows from \eqref{NewV311}. We complete the proof of the second assertion in Theorem \ref{T2.2}.}
\subsection{Critical exponent case $\beta=1$}\label{sec:4.3}
We return to the third assertion in Theorem \ref{T2.2}. Suppose that system parameters and  initial datum satisfy
\begin{align*}
\begin{aligned}
&\frac{ \kappa}{\left(\frac{1}{\alpha}+1\right)P_\infty}>2, \quad \alpha > 0, \quad  \beta =1, \quad \quad P_\infty :=\sup\limits_{(x,v)\in{\rm spt}(f^{\mathrm{in}})} |v|<\infty,  \\
&  \int_{\mathbb{R}^{2d}} e^{\alpha | x|} f^{\mathrm{in}}(z) \di z \leq {\mathcal M}_e(f^{\mathrm{in}}, \alpha)  < \infty,
\end{aligned}
\end{align*}
and let $f$ be a weak solution to \eqref{A-1}. Then, we consider the weak velocity alignment and weak spatial cohesion in what follows. 

\subsubsection{Weak velocity alignment} \label{sec:4.3.1}
For an exponentially decaying distribution class, we set
	\[	R_x(t):=\left(\frac{1}{\alpha}+1\right)P_\infty t.\]
Note that the relation \eqref{New-2-1} gives
	\begin{align}\label{C-3-4}
		\int_{|x_{\star}|\ge R_x(t)} f(t,z_{\star}) \di z_{\star}&\le{\mathcal M}_e(f^{\mathrm{in}}, \alpha)e^{\alpha(P_\infty t-R_x(t))}\le{\mathcal M}_e(f^{\mathrm{in}}, \alpha)e^{-P_\infty t}.
	\end{align}
	Then, we use the same argument in Section \ref{sec:4.1.1} to find  
\begin{align}\label{V111}
	\begin{aligned}
		&\quad\dfrac{\di}{\di t}\int_{\mathbb{R}^{2d}} |V(t)- v^{\mathrm{in}}_c|^2f^{\mathrm{in}}(z) \di z\\ 
		& \hspace{1cm} \le-2\kappa \underline{\phi}_{R_x}(t) \int_{\mathbb{R}^{2d}} |V(t)- v^{\mathrm{in}}_c|^2f^{\mathrm{in}}(z) \di z+\mathcal{O}(1) e^{- \frac{P_\infty}{2} t}.\\
	\end{aligned}
\end{align}
By Grönwall's lemma, we have 
\begin{align}\label{V311}
	\begin{aligned}
		&\int_{\mathbb{R}^{2d}} |V(t)- v^{\mathrm{in}}_c|^2f^{\mathrm{in}}(z) \di z\\ 
		& \hspace{0.5cm} \le e^{-2\kappa \int_0^t  \underline{\phi}_{R_x}(s)  \di s}\int_{\mathbb{R}^{2d}} | v- v^{\mathrm{in}}_c|^2f^{\mathrm{in}}(z) \di z+ \mathcal{O} (1)  \int_0^{t}e^{-2 \kappa\int_s^t  \underline{\phi}_{R_x}(\tau) \di \tau}e^{- \frac{P_\infty}{2} s}  \di s\\
		& \hspace{0.5cm} \le e^{-2\kappa \int_0^t  \underline{\phi}_{R_x}(s) \di s}\left\{\int_{\mathbb{R}^{2d}} | v- v^{\mathrm{in}}_c|^2f^{\mathrm{in}}(z) \di z+ \mathcal{O} (1)  \int_0^{t}e^{2 \kappa\int_0^s  \underline{\phi}_{R_x}(\tau) \di \tau}e^{- \frac{P_\infty}{2}s} \di s\right\}\\
		& \hspace{0.5cm} \le\mathcal{O} (1)  e^{-2\kappa \int_0^t  \underline{\phi}_{R_x}(s) \di s} \le \mathcal{O}\left((1+t)^{-\frac{\kappa}{\left(\frac{1}{\alpha}+1\right)P_\infty}}\right).
	\end{aligned}
\end{align}
{Here, we used 
\begin{align*}
e^{-2\kappa\int_0^t  \underline{\phi}_{R_x}(s) \di s}&\le e^{-2\kappa\int_0^t \frac{1}{\left(1+\left(2\left(\frac{1}{\alpha}+1\right)P_\infty  s\right)^2\right)^{\frac{1}{2}} }\di s} \\
&\le\mathcal{O}\left( e^{-\frac{ \kappa}{\left(\frac{1}{\alpha}+1\right)P_\infty}\int_0^t \frac{1}{1+s} \di s}\right) \le\mathcal{O}\left((1+t)^{-\frac{\kappa}{\left(\frac{1}{\alpha}+1\right)P_\infty}}\right)\end{align*}
 in the fourth inequality.}

	\vspace{0.2cm}
	
\subsubsection{Weak spatial cohesion} \label{sec:4.3.2}	
Similar to Section \ref{sec:4.1.2}, we have 
	\begin{align}\label{X22}
		\begin{aligned}
			\quad\dfrac{\di}{\di t}\left(\int_{\mathbb{R}^{2d}} |X(t)-v^{\mathrm{in}}_ct- x^{\mathrm{in}}_c |^2f^{\mathrm{in}}(z) \di z\right)^{\frac{1}{2}}\le\left(\int_{\mathbb{R}^{2d}} |V(t)- v^{\mathrm{in}}_c |^2f^{\mathrm{in}}(z) \di z\right)^{\frac{1}{2}}.
		\end{aligned}
	\end{align}
Then, we can use $\frac{ \kappa}{\left(\frac{1}{\alpha}+1\right)P_\infty}>2$ and \eqref{V311} to derive the desired estimates.

\begin{remark}
	If $\beta < 1$, we can get an exponentially weak flocking behavior, when the velocity variable has compact support and the spatial variable has exponential decay. This can be seen in \eqref{V311}: 
\begin{align}\label{V312}
	\begin{aligned}
		\int_{\mathbb{R}^{2d}} |V(t)- v^{\mathrm{in}}_c|^2f^{\mathrm{in}}(z) \di z&\le\mathcal{O} (1)  e^{-2\kappa \int_0^t  \underline{\phi}_{R_x}(s) \di s} \le \mathcal{O}\left(e^{-2\kappa\mathcal{O}((1+t)^{1-\beta})}\right).
	\end{aligned}
\end{align}
\end{remark}

\section{Conclusion}\label{sec:5}
In this paper, we have shown the propagation of second-order spatial-velocity moments in the KCS model with non-compact spatial-velocity support, which we call weak flocking behavior. Unlike the compact support scenario, the non-compact setting causes significant challenges, such as the communication weight having a zero lower bound and the sup-linear growth of position support, rendering classical methods inapplicable. To address these issues, we derived refined upper-bound estimates for spatial-velocity moments and established the uniqueness of weak solutions via particle trajectories. We further explored two distinct classes of initial distributions characterized by exponential and polynomial decays in phase space. For both cases, we rigorously demonstrated weak flocking behavior, evidenced by the convergence of the second-order velocity moment, centered on the initial average velocity, to zero over time. Simultaneously, the second-order spatial moment around the center of mass remains uniformly bounded, indicating sustained spatial aggregation. Our results also highlight the persistence of weak flocking in the KCS model under non-compact spatial-velocity support, revealing that velocities still converge towards the initial average velocity when the initial distribution exhibits sufficient concentration. However, we can not deal with the critical case $\beta=1$ under polynomially decaying initial distributions. Moreover, the relativistic KCS model with non-compact spatial-velocity support still needs to be explored. {Furthermore, determining the optimal weak flocking rate for given initial data and system parameters is also an interesting problem.} These interesting questions will be left for future work. 
\section*{Conflict of interest statement}
The authors declare no conflicts of interest.

\section*{Data availability statement}
The data supporting the findings of this study are available from the corresponding author upon reasonable request.

\end{document}